\documentclass[10pt]{amsart}
\usepackage{amsmath}
\usepackage{amsfonts}
\usepackage{amssymb}
\usepackage{graphicx}
\usepackage{amsthm,graphicx,color,yfonts}
\usepackage{pdfsync}
\usepackage{epstopdf}%

\usepackage[colorlinks=true]{hyperref}
\hypersetup{linkcolor=black,citecolor=black,filecolor=black,urlcolor=black} 

\addtolength{\hoffset}{-2.5cm}
\addtolength{\textwidth}{4cm}
\addtolength{\voffset}{-1cm}
\addtolength{\textheight}{1.5cm}

\setcounter{MaxMatrixCols}{30}
\providecommand{\U}[1]{\protect\rule{.1in}{.1in}}


\newcommand{\wt}{\widetilde}

\newcommand{\R} {\mathbb R}
\newcommand{\cuad}{{\sqcap\kern-.68em\sqcup}}

\newcommand{\dist}{{\rm dist}\, }

\newcommand{\ve}{\varepsilon}

\newcommand{\be}{\begin{equation}}
\newcommand{\ee}{\end{equation}}

\newcommand{\equ}[1]{(\ref{#1})}


\long\def\red#1{{\color{black}#1}}

\numberwithin{equation}{section}

\newtheorem{theorem}{Theorem}[section]
\newtheorem{proposition}{Proposition}[section]
\newtheorem{corollary}{Corollary}[section]
\newtheorem{lemma}{Lemma}[section]
\newtheorem{definition}{Definition}[section]

\newtheorem{remark}{Remark}[section]

\title[The classification of four end solutions]{The classification of four end solutions to the Allen-Cahn equation on  the plane}

\author{Micha{\l} Kowalczyk}
\address{Micha{\l} Kowalczyk, Departamento de Ingenier\'{\i}a  Matem\'atica and Centro de Modelamiento Matem\'atico (UMI 2807 CNRS), Universidad de Chile, Casilla 170 Correo 3, Santiago, Chile.}
\email {kowalczy@dim.uchile.cl}

\author{Yong Liu}
\address{\noindent  Y. Liu - Departamento de
Ingenier\'{\i}a  Matem\'atica and Centro de Modelamiento Matem\'atico (UMI 2807 CNRS), Universidad de Chile,
Casilla 170 Correo 3, Santiago,
Chile.}
\email{yliu@dim.uchile.cl}

\author{Frank Pacard}
\address{Frank Pacard, Centre de Math\'ematiques Laurent Schwartz,  \'Ecole Polytechnique,  91128 Palaiseau, France et Institut Universitaire de France}
\email{frank.pacard@math.polytechnique.fr}

\begin{document}

\maketitle

\begin{abstract}
An entire  solution of the Allen-Cahn equation $\Delta u=f(u)$, where $f$ has exactly three zeros at $\pm 1$ and $0$, is balanced and odd, e.g. $f(u)=u(u^2-1)$, is called a $2k$-ended solution if its nodal set is asymptotic to $2k$ half lines, and if along each of these half lines the function $u$ looks like the one dimensional, heteroclinic solution.
In this paper we consider the family of four ended solutions  whose ends are almost parallel at $\infty$. We show that this family can be parametrized by the family of  solutions of the two component Toda system. As a result we obtain the uniqueness of  four ended solutions with almost parallel ends. Combining this result with the classification of connected components in the moduli space of the four ended solutions we  can classify all such solutions.  
Thus we show  that four end solutions form, up to rigid motions, a one parameter family. This family contains the saddle solution, for which the angle between the nodal lines is $\frac{\pi}{2}$ as well as solutions for which the angle between the asymptotic half lines of the nodal set is arbitrary small (almost parallel nodal sets). 
\end{abstract}

\section{Introduction}
\subsection{Some entire solutions to the Allen-Cahn equation in $\R^2$}

This paper deals with the problem of classification of the family of four end solutions  (precise definition will follow) to the Allen-Cahn equation:
\begin{equation}
\Delta u=F^{\prime}\left(  u\right)  \text{ in }\mathbb{R}^{2}.\label{AC}%
\end{equation}
The function  $F$ is a smooth,  double well potential, which means that  we assume the
following conditions for $F:$ $F$ is even, nonnegative and has only two zeros
at $\pm1$. Moreover $F^{\prime\prime}\left(  1\right)  \neq0$ and $F^{\prime
}\left(  t\right)  \neq0,t\in\left(  0,1\right)$. We also suppose $F''(0)\neq 0$. 


It is know that (\ref{AC}) has a solution whose nodal set is a straight line, it will be called {\it a planar solution}. It is simply obtained by taking the unique, odd, heteroclinic solution connecting $-1$ to $1$:
\begin{align}
\label{heteroclinic}H''=F'(H), \quad H(\pm \infty)=\pm 1, \quad H(0)=0,
\end{align}
and letting $u(x,y)=H(ax+by+c)$ for some constants $a,b, c$ such that $a^2+b^2=1$. We note that if, say $a>0$ then $\partial_x u=aH'>0$.  De Giorgi conjecture says that if $u$ is any smooth and bounded solution of (\ref{AC}) such that $\partial_{\mathtt e} u>0$ for certain fixed direction ${\mathtt e}$ then in fact $u$ must be a planar solution. Indeed this conjecture holds in $\R^N$, $N\leq 8$  (\cite{MR1637919} when $N=2$, \cite{MR1775735} when $N=3$, and \cite{MR2480601}, for $4\leq N\leq 8$ under additional limit condition), while a counterexample can be given when $N\geq 9$ \cite{dkp_dg}.  It is worth mentioning that the De Giorgi conjecture is a direct analog of the famous  Bernstein conjecture in the theory of minimal surfaces.

 In order to proceed with  the statement of our results we  will define the family  of  four ended solutions of (\ref{AC}), which  is a particular example of a more general family of $2k$ ended solutions \cite{dkp-2009}.   Intuitively,  a  four ended solution $u$ is characterized by the fact that its nodal set $N(u)$ is asymptotic at infinity to four half lines, and along each of this half lines it looks locally like the heteroclinic solution.  To describe this precisely we introduce  the set $\Lambda_4$ of   {oriented and ordered four affine lines} in $\mathbb R^2$. Thus $\Lambda_4$ consists of $4$- tuples $(\lambda_1, \dots, \lambda_4)$ such that  each  $\lambda_j$ can be uniquely written as
\[
\lambda  :=   r_j  \, {\tt e}_j^\perp + \mathbb R  \, {\tt e}_j ,
\]
for some $r_j\in \mathbb R$ and some unit vector ${\tt e}_j =(\cos\theta_j, \sin\theta_j)\in S^1$, which defines the orientation of the line.  Recall that we denote by $\perp$ the rotation of angle  $\pi/2$ in $\mathbb R^2$.  
Observe that the affine lines are oriented and hence we do not identify the line corresponding to $(r_j, \theta_j)$ and the line corresponding to $(-r_j, \theta_j +\pi)$. 
Additionally we require these lines are ordered, which means:
%
%
\[
\theta_1 < \theta_2 < \theta_3 < \theta_{4} <  2\, \pi + \theta_1 .
\]
For future purpose  we denote by
\begin{equation}
\label{min angle}
 \theta_\lambda : = \frac{1}{2} \, \min \{ \theta_{2}- \theta_1 ,  \theta_{3} - \theta_{2} ,\theta_4-\theta_3, 2 \, \pi + \theta_1 - \theta_{4} \} ,
\end{equation}
the half of the minimum of the angles between any  two consecutive oriented affine lines $\lambda_1, \ldots, \lambda_4$.

Assume that we are given a $4$-tuple of oriented affine lines  $\lambda = (\lambda_1, \ldots, \lambda_4)$ . It is easy to check that for all $R > 0 $ large enough and for all $j=1, \ldots, 4$, there exists $s_j \in \mathbb R$  such that~:
\begin{itemize}
\item[(i)] The point ${\tt x}_j: =  r_j \,  {\tt e}^\perp_{j} + s_j \, {\tt e}_j$ belongs to the circle $\partial B_R$, with $R > 0$. \\
\item[(ii)] The half lines
\begin{equation}
\lambda_j^+ := {\tt x}_j + \mathbb R^+ \,  {\tt e}_j ,
\label{eq:halfline}
\end{equation}
are disjoint and included in $\mathbb R^2\setminus  B_R$.\\
\item[(iii)] The minimum of the distance between two distinct half lines $\lambda^+_i$ and $\lambda_j^+$ is larger than $4$.
\end{itemize}

The set of half affine lines $\lambda_1^+, \ldots, \lambda_{4}^+$ together with the  circle $\partial B_R$ induce a decomposition of $\mathbb R^2$ into $5$  slightly overlapping connected components
\[
\mathbb R^2  = \Omega_0 \cup \Omega_1 \cup \ldots \cup \Omega_{4} ,
\]
where
\[
\Omega_0 : =  B_{R+1},
\]
and where, for $j=1, \ldots, 4$,
\begin{align}
\label{decomp 1}
\Omega_j : = \left\{ {\tt x} \in \mathbb R^2 \, : \,   |{\tt x}| > R-1 \quad \mbox{and} \quad   {\rm dist} ({\tt x}, \lambda_j^+)  <   {\rm dist} ({\tt x}, \lambda_i^+) +2 ,  \quad \forall i \neq j \,\right\} ,
\end{align}
where $ {\rm dist} ({\tt x}, \lambda_j^+)$ denotes the distance to $\lambda_j^+$. Observe that, for all $j=1, \ldots, 4$, the set  $\Omega_j$ contains the half line $\lambda_j^+$.

We define ${\mathbb I}_0, {\mathbb I}_1,  \ldots , {\mathbb I}_4$,  a smooth partition of unity of  $\mathbb R^2$ which is subordinate to the above decomposition of $\mathbb R^2$. Hence
\[
\sum_{j=0}^4  \mathbb I_j \equiv 1,
\]
and the support of $\mathbb I_j$ is included in $\Omega_j$, for $j=0, \ldots, 4$. Without loss of generality, we can also assume that  ${\mathbb I}_0\equiv 1$ in
\[
\Omega'_0 := B_{R -1},
\]
and ${\mathbb I}_j\equiv 1$ in
\[
\Omega'_j := \left\{ {\tt x} \in \mathbb R^2 \, : \,   |{\tt x}| > R+1 \quad \mbox{and} \quad   {\rm dist} ({\tt x}, \lambda_j^+)  <   {\rm dist} ({\tt x}, \lambda_i^+) - 2 ,  \quad \forall i \neq j \,\right\}  ,
\]
for $j=1, \ldots, 4$. Finally, we assume that
\[
\|{\mathbb I}_{j}\|_{\mathcal C^2(\mathbb R^2)} \leq C  .
\]

We now take $\lambda  = (\lambda_1, \ldots, \lambda_4 ) \in \Lambda_4$
with $\lambda_j^+ = {\tt x}_j  + \mathbb R^+ \, {\tt e}_j$ and we define
\begin{align}
u_\lambda ({\tt x}): =  \sum_{j=1}^{4}  (-1)^j \, {\mathbb I}_j ({\tt x})  \, H (({\tt x}-{\tt x}_j)\cdot {\tt e}^\perp_j).
\label{def w}
\end{align}

Observe that, by construction, the function $u_\lambda$ is, away from a compact, asymptotic to copies of planar solutions whose nodal set are the half affine lines $\lambda_1^+, \ldots,  \lambda_{4}^+$. A simple computation shows that $u_\lambda$ is not far from being a solution of (\ref{AC}) in the sense that $\Delta \, u_\lambda - F'(u_\lambda)$ is a function which decays exponentially to $0$ at infinity (this uses the fact that $\theta_\lambda >0$).

In this paper we are interested in four ended solutions of (\ref{AC})  which means that they are asymptotic to a function $u_\lambda$ for some choice of $\lambda  \in  \Lambda_4$.  More precisely, we have the~:
\begin{definition}
Let $\mathcal S_4$ denote the set of functions $u$ which are defined in $\mathbb R^2$ and which satisfy
\begin{equation}
u  - u_\lambda  \in   W^{2,2} \, (\mathbb R^2) ,
\label{ass w}
\end{equation}
for some $\lambda \in \Lambda_4$. We also define the {\rm decomposition operator} $\mathcal J$ by
\[
\begin{array}{rcccllll}
\mathcal J  : & \mathcal S_{4}  & \longrightarrow &   W^{2,2}  (\mathbb R^2) \times \Lambda_4 \\[3mm]
                    & u &\longmapsto &  \left( u- u_\lambda , \lambda  \right)  .
\end{array}
\]
The topology on $\mathcal S_4$ is the one for which the operator $\mathcal J$ is continuous (the target space being endowed with the product topology). We define the set of four ended solutions of the Allen-Cahn equation $\mathcal M_4$ to be the set of solutions $u$ of (\ref{AC}) which belong to $\mathcal S_4$.
\label{de:001}
\end{definition}


The set $\mathcal M_4$ is nonempty. Indeed,  it is known that $\left(  \ref{AC}\right)  $ has a saddle solution $U,$ which
is bounded and symmetric:
\[
U\left(  x,y\right)  =U\left(  x,-y\right)  =U\left(  -x,y\right)  .
\]
Moreover, the nodal set of $U$ is exactly the lines $y=\pm x.$ Along these two
lines, $U$ converges exponentially fast to the \textquotedblleft
heteroclinic\textquotedblright\ solution. In addition in \cite{MR2557944} it is shown that there exists a small number $\varepsilon_0$ such that for all $0<\theta$, with $\tan\theta< \ve_0$ there exists a four ended solution with corresponding  angles  of the half lines $\lambda^+_j$, $j=1,\dots, 4$ given by
\[
\theta_1=\theta, \quad\theta_2=\pi-\theta, \quad\theta_3=\theta+\pi, \quad \theta_4=2\pi-\theta.
\] 
Observe that the fact that  $\theta$ is small implies that the ends of this solution are almost parallel and their slopes given by $\pm\ve$ are small as well. Clearly, by symmetry, it is easy to see that there exist also solutions with parallel ends whose angles are given by:
\[
\theta_1=\pi/2-\theta, \quad\theta_2=\pi/2+\theta, \quad\theta_3=-\theta+3\pi/2, \quad \theta_4=3\pi/2+\theta.
\] 
In this case we would have $\tan\theta>\frac{1}{\ve_0}$. 

Clearly, any four ended solution can be translated and rotated, yielding another four ended solution.  In fact, by a result of Gui \cite{2011arXiv1102.4022G} we know that any $u\in {\mathcal M}_4$ is, modulo rigid motions, and a multiplication of a solution by $-1$, even in its variables, and monotonic in $x$ in the set $x>0$, and in $y$ in the set $y<0$ i.e.:
\begin{align}
\label{even}
u(x,y)=u(-x,y)=u(x,-y), \quad u_x(x,y)>0,\quad  x>0, \quad u_y(x,y)>0, \quad y<0.
\end{align}
Thus, when studying four ended solutions,   it is natural to consider the set ${\mathcal M}_4^{even}\subset {\mathcal M}_4$, consisting precisely of functions satisfying (\ref{even}).  With each such function $u$ we may associate in a unique way the angle that the component of its nodal set in the first quadrant makes with the $x$-axis. Thus we can define  {\it{the angle  map}}:
\begin{equation}
\label{def thetau}
\begin{aligned}
&\theta\colon {\mathcal M}_4^{even}\to (0,\frac{\pi}{2}),\\
&u\longmapsto \theta.
\end{aligned}
\end{equation}
In principle the value of the angle map is not enough to identify in a unique way a solution to (\ref{AC}) in ${\mathcal M}_4^{even}$.  However  for solutions with almost parallel ends we have the following:
\begin{theorem}\label{teo uniqueness}
There exists a small number $\ve_0$ such that for any two solutions $u_1, u_2\in {\mathcal M}_4^{even}$ satysfying  $\tan\theta(u_1)=\tan(\theta_2)=m$,  and either $m<\ve_0$ or $m>\frac{1}{\ve_0}$, we have necessarily $u_1\equiv u_2$.
\end{theorem}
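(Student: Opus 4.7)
The plan is to prove uniqueness by establishing a local bijection between $\mathcal{M}_4^{even}$ in the asymptotic regime $\theta\to 0$ and the set of solutions of a scalar reduction of the two-component Toda system, and then observing that the Toda side is rigid once the asymptotic slope and the even symmetry are prescribed. The case $\tan\theta>1/\ve_0$ is reduced to the case $\tan\theta<\ve_0$ by a rotation of angle $\pi/2$, which transforms almost vertical parallel ends into almost horizontal ones and preserves $\mathcal{M}_4^{even}$, so below I focus on $\tan\theta<\ve_0$.

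For $u\in\mathcal{M}_4^{even}$ with $m:=\tan\theta(u)$ small, the evenness in $x$ and $y$ together with the monotonicity (\ref{even}) force the nodal set of $u$ to split into two curves $\{y=\pm q(x)\}$ with $q$ even, positive, and asymptotic to $m|x|$ at infinity (note that for $m$ small the two curves do not meet, which is the ``almost parallel'' pairing of the ends, as opposed to the ``crossing'' pairing realized by the saddle solution). Around the approximate solution
\[
u_q(x,y):=H(y+q(x))-H(y-q(x))-1,
\]
a formal projection of $\Delta u-F'(u)=0$ onto the translational mode $H'$ along each nodal curve yields at leading order a scalar Toda-type equation of the form $q''=c_0\,e^{-2q}$, with $c_0>0$ depending only on $F$, together with the boundary conditions $q'(\pm\infty)=\pm m$ and $q'(0)=0$. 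This autonomous ODE has conserved energy $\tfrac{1}{2} m^2$ on the relevant trajectory and hence admits a unique solution $q=q_m$ for every $m\in(0,\ve_0)$.

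For each such $q_m$ one constructs a unique genuine solution of (\ref{AC}) close to $u_{q_m}$ by an infinite-dimensional Lyapunov--Schmidt scheme in weighted Sobolev spaces: the linearization of (\ref{AC}) at $u_{q_m}$ is Fredholm and its approximate kernel is generated by the two translational modes along the nodal curves, which are removed by suitable orthogonality conditions. The nonlinear correction $\phi$ is then produced by the implicit function theorem for $m$ small enough, inherits the even symmetry of the ansatz, and satisfies $u_{q_m}+\phi\in\mathcal{M}_4^{even}$. This essentially amounts to the construction of \cite{MR2557944} and gives the ``easy'' direction of the bijection, together with local uniqueness near the ansatz.

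The heart of the proof, and the main obstacle, is the converse rigidity statement: every $u\in\mathcal{M}_4^{even}$ with $m<\ve_0$ arises from the above construction. Given such $u$, the implicit function theorem, justified by the transversality $u_y>0$ in $\{y<0\}$, produces a nodal parametrization $\tilde q$; writing $u=u_{\tilde q}+\tilde\phi$ with slice-wise orthogonality conditions on $\tilde\phi$ and projecting (\ref{AC}) onto the translational modes gives an effective ODE for $\tilde q$. The delicate task is to establish a priori estimates on $\tilde\phi$ in weighted spaces, uniform in $u$, that force the error terms in this effective equation to be small enough that $\tilde q$ must satisfy the same Toda equation and the same boundary conditions as $q_m$; this in turn rests on a careful analysis of the invertibility of the linearized Allen--Cahn operator about the two-curve ansatz, and on controlling the exponentially small interactions between the well-separated nodal curves. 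Once $\tilde q=q_m$ is obtained, the local uniqueness from the construction step yields $u_1=u_2$, completing the proof.
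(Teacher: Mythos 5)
Your overall scheme---reduce to a Toda description, match slopes, and invoke Lyapunov--Schmidt uniqueness---has the right flavor and correctly identifies the main difficulty, but the step that is meant to close the argument fails as stated. You claim the a priori estimates force the nodal profile $\tilde q$ to ``satisfy the same Toda equation and the same boundary conditions as $q_m$,'' i.e.\ $\tilde q=q_m$ exactly, and then appeal to local uniqueness of the Lyapunov--Schmidt fixed point. But the nodal curve of an actual four-ended solution is \emph{not} a Toda trajectory: the projected equation for $\tilde q$ always carries a nonzero forcing term coming from the transversal correction $\tilde\phi$ and from the exponentially small interaction between the two nodal curves, and Proposition~\ref{prop gamma} proves only that $\chi_{\ve,1}=f_{\ve,1}+h_\ve+j_\ve-q_{\ve,1}$ is $\mathcal{O}(\ve^\alpha)$ in weighted norm, small but not zero. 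So $\tilde q\ne q_m$ in general, and identifying the two is a genuine gap, not a technicality to be filled in.

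The paper avoids this by a different decomposition. The Toda profile $q_{\ve,i}$ of slope $\ve$ is frozen as a reference curve, Fermi coordinates $(\wt x_i,\wt y_i)$ are introduced around its graph $\wt\Gamma_{\ve,i}$, and each solution $u$ with $\tan\theta(u)=\ve$ is written as $u=\wt u_\ve(\cdot;\wt g_\ve)+\wt\phi$, where the modulation function $\wt g_\ve$ is defined by slicewise orthogonality conditions and records---rather than annihilates---the discrepancy between $u$ and the Toda ansatz. Given two solutions $u^{(1)},u^{(2)}$ with the same $\ve$, the paper then proves a Lipschitz bound $\|\wt\phi^{(1)}-\wt\phi^{(2)}\|\le o(1)\,\|\wt g_\ve^{(1)}-\wt g_\ve^{(2)}\|$ (Lemma~\ref{lip error}) and shows that $\wt g_\ve^{(1)}-\wt g_\ve^{(2)}$ solves the \emph{linearized} Toda equation with a right-hand side again bounded by $o(1)\,\|\wt g_\ve^{(1)}-\wt g_\ve^{(2)}\|$, after which the invertibility of the linearized Toda operator on the even functions plus the deficiency space $D$ forces $\wt g_\ve^{(1)}=\wt g_\ve^{(2)}$, and hence $u^{(1)}=u^{(2)}$. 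Two further points your sketch glosses over: the ansatz should be built with the signed Fermi distance rather than the Euclidean $y$ in $H(y\pm q(x))$, since the $\ve$-tilt of the ends alters the exponential decay rates entering the $\mathcal{C}^{0,\mu}_{\ve\tau}$ norms; and the crucial a priori input that lets the estimates close is not generic linear theory but a Hamiltonian balancing identity comparing $u$ at $x=0$ with the reference solutions of \cite{MR2557944} (Lemma~\ref{est at 0}), which pins down $|f_{\ve,1}(0)-q_{\ve,1}(0)|=\mathcal{O}(\ve^{\alpha_1})$ and from there yields the sharp bound $\|f''_{\ve,1}\|_{\mathcal{C}^{0,\mu}_{\ve\tau}(\R)}\le C\ve^2$.
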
 

This result gives in some sense  classification of the subfamily of  the family of four end solutions which contains solutions with  almost parallel ends. It says that this subfamily consists   precisely of the solutions  constructed in \cite{MR2557944}. Let us explain the importance of this statement from the point of view of  classification of all four end solutions. We will appeal to the following theorem  proven in \cite{partI}:
\begin{theorem}\label{part I}
Let $M$ be any connected component of $ {\mathcal M}_4^{even}$ . Then the angle map $\theta\colon M\to (0, \frac{\pi}{2})$ is surjective.
\end{theorem}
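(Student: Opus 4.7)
My plan is the standard open--closed topological argument in the interval $(0,\pi/2)$. Since $M$ is connected and $\theta$ continuous, $\theta(M)$ is a subinterval; showing that it is simultaneously open and closed in $(0,\pi/2)$ forces $\theta(M)=(0,\pi/2)$. Openness will follow from the one-dimensional real-analytic moduli structure on $\mathcal{M}_4^{even}$ together with a non-degeneracy check for $d\theta$; closedness will come from a compactness argument applied to sequences of solutions whose angle stays in a compact sub-interval of $(0,\pi/2)$.

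\textbf{Openness.} First I would invoke the Fredholm theory of the linearized Allen--Cahn operator $-\Delta+F''(u)$ on the ends developed in \cite{dkp-2009}: at every $u\in\mathcal{M}_4$ the space of bounded Jacobi fields is $4$-dimensional and parametrizes infinitesimal deformations of the four asymptotic half-lines, endowing $\mathcal{M}_4$ with the structure of a real-analytic $4$-manifold. The reflections in \eqref{even} act on this tangent space, and once the center of symmetry is pinned at $0$ their fixed locus is $1$-dimensional, so $\mathcal{M}_4^{even}$ is a real-analytic $1$-manifold on which $\theta$ is real-analytic. A nonzero $\phi\in T_u M$ cannot arise from a rigid motion (translations and non-trivial rotations break \eqref{even}), nor can it act on the ends by pure displacements along their own direction (these are themselves translations); hence $\phi$ induces a nonzero infinitesimal rotation of the first-quadrant end, which is precisely $d\theta(u)[\phi]$. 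Therefore $d\theta(u)\ne0$ everywhere on $M$, $\theta|_M$ is a local diffeomorphism, and $\theta(M)$ is open in $(0,\pi/2)$.

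\textbf{Closedness, the main obstacle.} Let $u_n\in M$ with $\theta_n:=\theta(u_n)\to\theta^*\in(0,\pi/2)$, and let $\lambda_n\in\Lambda_4$ denote the asymptotic configuration of $u_n$. The evenness \eqref{even} pins the center of symmetry at $0$ and forces the angles of $\lambda_n$ to be $\theta_n,\pi-\theta_n,\pi+\theta_n,2\pi-\theta_n$ while the offsets $r_j^{(n)}$ are determined by the same symmetry, so up to subsequence $\lambda_n\to\lambda^*\in\Lambda_4$ with angles $\theta^*,\pi-\theta^*,\pi+\theta^*,2\pi-\theta^*$. Standard elliptic regularity then extracts a further subsequence converging in $C^2_{\mathrm{loc}}(\R^2)$ to an even, bounded, monotone solution $u^*$ of \eqref{AC}. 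The delicate step is upgrading this local convergence to the $W^{2,2}$ asymptotic condition $u^*-u_{\lambda^*}\in W^{2,2}(\R^2)$ of Definition \ref{de:001}. Because $\theta^*\in(0,\pi/2)$, the minimal half-angle $\theta_{\lambda_n}$ of \eqref{min angle} is bounded below uniformly in $n$; this is what makes the exponential decay estimate for $u_n-u_{\lambda_n}$ along each end (driven by the spectral gap of the one-dimensional linearized operator $-\partial_{tt}+F''(H)$) uniform in $n$. Passing to the limit yields $u^*\in\mathcal{S}_4$ with ends $\lambda^*$, hence $u^*\in\mathcal{M}_4^{even}$ with $\theta(u^*)=\theta^*$. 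As $M$ is closed in $\mathcal{M}_4^{even}$, $u^*\in M$, so $\theta^*\in\theta(M)$. The hardest step throughout is this uniform end-by-end decay: it is precisely the hypothesis $\theta^*\in(0,\pi/2)$ (equivalently, the openness of the target interval in the statement) that rules out end collision or parallelisation in the limit, which would otherwise destroy the $W^{2,2}$ asymptotic structure and prevent the limit from being four-ended.
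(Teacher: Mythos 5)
The paper does not actually prove Theorem \ref{part I} — it is quoted from the companion paper \cite{partI} — so there is no in-text proof to compare against, and I assess the proposal on its own terms.

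Your open-closed strategy is natural, and the closedness half is broadly the right idea, though it glosses over two real points: one needs to argue that the offsets $r_j^{(n)}$ stay bounded along the sequence (for angles near $0$ or $\pi/2$ the offsets blow up like $\log(1/\varepsilon)$, so some quantitative control coming from $\theta^*\in(0,\pi/2)$ is required, not just for the angles), and one needs that the weighted decay of Proposition \ref{refined asymp} is uniform in $n$ to upgrade $C^2_{\mathrm{loc}}$ convergence to convergence in the $\mathcal{S}_4$ topology. The serious gap, however, is in the openness step. You infer $d\theta(u)\neq0$ from the observation that a nonzero $\phi\in T_uM$ is not a rigid motion and does not act on the ends by "pure displacements along their own direction." That inference is incorrect. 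A tangent vector to $\mathcal{M}_4$ acts on the asymptotic configuration $\lambda$ through \emph{normal} displacements of the half-lines (changes in the offsets $r_j$) as well as through angle changes; an even tangent vector moving the four half-lines symmetrically outward — changing the $r_j$ while fixing all $\theta_j$ to first order — is neither a rigid motion nor a tangential sliding (the latter is vacuous for a half-line), and nothing in your argument excludes it. The paper's own discussion signals exactly this difficulty: what is reported from \cite{partI} is that the \emph{augmented} map $P=(\theta,u(0))$ has an embedded curve as image, not that $\theta$ alone is an immersion. Even granting a properly embedded curve $P(M)\subset(0,\tfrac{\pi}{2})\times(-1,1)$ avoiding $u(0)=\pm1$, surjectivity of $\theta$ still requires showing that the two ends of the curve exit through the two \emph{distinct} boundary components $\theta=0$ and $\theta=\pi/2$, rather than both through the same one (which would leave $\theta(M)$ a proper subinterval). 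Ruling that out is the crux of the theorem, and your proposal does not address it.
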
 
Consider for example the connected component $M_0\subset {\mathcal M}_{4}^{even}$ which contains the saddle solution $U$. Theorem \ref{part I} implies that $U$ can be deformed along $M_0$  to a solution with the value of the angle map arbitrarily close to $0$ or to $\frac{\pi}{2}$, thus yielding a solution in the subfamily of the solutions with almost parallel ends. But these solutions are uniquely determined by the value of the angle map, which follows from the uniqueness statement in Theorem \ref{teo uniqueness}. As a result we obtain the following classification theorem:
\begin{theorem}\label{classification}
Any solution  $u\in {\mathcal M}_{4}^{even}$ belongs to $M_0$ and  is a continuous deformation of the saddle solution $U$.
\end{theorem}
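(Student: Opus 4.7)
The plan is to combine the uniqueness result for almost-parallel four-ended solutions (Theorem~\ref{teo uniqueness}) with the surjectivity of the angle map on any connected component of $\mathcal{M}_4^{even}$ (Theorem~\ref{part I}). The argument reduces to showing that $\mathcal{M}_4^{even}$ has a single connected component, which must then coincide with $M_0$.

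First, I would let $M\subset \mathcal{M}_4^{even}$ be an arbitrary connected component. By Theorem~\ref{part I}, the angle map $\theta\colon M\to (0,\pi/2)$ is surjective, so I may choose an angle $\theta^{\star}\in (0,\pi/2)$ satisfying $\tan\theta^{\star}<\varepsilon_0$ and select $u\in M$ with $\theta(u)=\theta^{\star}$. Applying Theorem~\ref{part I} to the component $M_0$ containing the saddle solution $U$, I similarly produce $u_0\in M_0$ with $\theta(u_0)=\theta^{\star}$.

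Next, I would invoke Theorem~\ref{teo uniqueness}: both $u$ and $u_0$ lie in $\mathcal{M}_4^{even}$ and share the common slope $\tan\theta^{\star}<\varepsilon_0$, hence $u\equiv u_0$. This exhibits a point in $M\cap M_0$, and since two connected components of any topological space are either equal or disjoint, I conclude $M=M_0$. Therefore $\mathcal{M}_4^{even}=M_0$, which proves the first assertion. The second assertion then follows immediately, since any $u\in\mathcal{M}_4^{even}=M_0$ can be joined to $U$ by a continuous path inside the connected component $M_0$, which is exactly the meaning of being a continuous deformation of the saddle solution.

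The real content of the classification theorem is absorbed into the two inputs Theorem~\ref{teo uniqueness} and Theorem~\ref{part I}; the main obstacle is not in the deduction above but in the proof of Theorem~\ref{teo uniqueness}, which requires parametrising the almost-parallel moduli via the two-component Toda system and showing that the corresponding four-ended solution is uniquely determined by its asymptotic slope. Given those two theorems, the classification itself is essentially a connectedness argument, and no further analytic input beyond the surjectivity of $\theta$ on each component and the uniqueness on the almost-parallel end is required.
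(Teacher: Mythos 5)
Your proposal is correct and follows essentially the same argument as the paper, which deduces the classification directly from the surjectivity of the angle map on each connected component (Theorem~\ref{part I}) and the uniqueness for almost-parallel ends (Theorem~\ref{teo uniqueness}), concluding that every component must meet $M_0$ and hence equal it. The paper presents this reasoning informally in the paragraph preceding the theorem statement; your write-up makes the same connectedness argument explicit.
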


To appreciate further this result let us mention that in \cite{partI} we prove a more general statement regarding arbitrary connected components in the moduli space of solutions ${\mathcal M}_4^{even}$. To explain this we consider the following map 
\begin{align*}
P\colon {\mathcal M}_4^{even}&\to (0, \frac{\pi}{2})\times (-1,1)\subset \R^2,\\
u&\longmapsto (\theta(u), u(0)).
\end{align*}
Then, according to Theorem 1 in \cite{partI}, the image of any connected component $M\subset {\mathcal M}_4^{even}$ under this map $P(M)$ is an embedded, smooth curve in $(0, \frac{\pi}{2})\times (-1,1)$. Thus, Theorem 
\ref{classification} implies that the set $P({\mathcal M}_4^{even})$ consists of a single embedded curve in $(0, \frac{\pi}{2})\times (-1,1)$.  

We observe that according to  the conjecture of De Giorgi  in two dimensions any  bounded solution $u$ which is monotonic in one direction must be one dimensional and equal to $u({\tt x})=H({\tt a}\cdot {\tt x}+b)$. In the language of multiple end solutions, this solution has {\it two} (heteroclinic, planar) ends. Theorem \ref{classification} gives on the other hand the classification of the family of solutions with {\it four} planar ends.  Since the number of ends of a solution to (\ref{AC}) must be even, the family of four ended solutions is the natural object to study. In this context, one may wonder if it is possible to classify solutions to (\ref{AC}) assuming for instance that the nodal sets of $u_x$, and $u_y$ have just one component. This question is beyond the scope of this paper, however since partial derivatives of four ended solution satisfy this assumption it seems reasonable to conjecture that a result similar to Theorem \ref{classification} should hold in this more general setting. We should mention here that it is in principle possible  to study the problem of classification of solutions assuming for example that their Morse index is $1$. This is natural since the Morse index of $u$ and the number of the nodal domains of $u_x$ and $u_y$ are related. We recall here that the heteroclinic is stable, and from  \cite{MR2110438} we know that in dimension $N=2$ stability of a solution  implies that it is necessarily a one dimensional solution (for the related minimality conjecture, see for example  \cite{MR2480601} and  \cite{wei_pacard} and the reference therein). We expect that in fact the family of four ended solutions should should contain all   multiple end solutions with Morse index $1$ \cite{partIII} (the Morse index of the saddle solution is $1$ \cite{MR1363002}). 

Let us now explain the analogy of Theorem \ref{classification} with some aspects of the theory minimal surfaces in $\R^3$. In 1834, Scherk discovered an example of singly-periodic, embedded, minimal surface in $\R^3$ which, in a complement of a vertical cylinder, is asymptotic to $4$ half planes with angle $\frac{\pi}{2}$ between them. This surface, after a rigid motion, has two planes of symmetry, say  $\{x_2=0\}$ plane and  $\{x_1=0\}$, and it is periodic, with period $1$ in the $x_3$ direction. If $\theta$ is  the angle between the asymptotic end of the Scherk surface contained in $\{x_1>0, x_2>0\}$ and the $\{x_2=0\}$ plane  by then $\theta=\frac{\pi}{4}$. This is the so called second Scherk's surface and it will denoted here by $S_{\frac{\pi}{4}}$. In 1988 Karcher \cite{MR958255} found Scherk surfaces other than the original example in the sense that the corresponding angle between their asymptotic planes and the $\{x_2=0\}$ plane can be any $\theta\in (0, \frac{\pi}{2})$. The one parameter family $\{S_\theta\}_{\{0<\theta<\frac{\pi}{2}\}}$ of these surfaces is the family of  Scherk singly periodic minimal surfaces. Thus, accepting that the saddle solution of the Allen-Cahn equation $U$ corresponds to the Scherk surface $S_{\frac{\pi}{4}}$  Theorem \ref{part I} can be understood as an analog of the result of Karcher. We note that, unlike in the case of the Allen-Cahn equation, the Scherk family is given explicitly, for example it can be represented as the zero level set of the function:
\begin{align*}
F_\theta(x_1,x_2, x_3)=\cos^2\theta\cosh\big(\frac{x_1}{\cos\theta}\big)-\sin^2\theta\cosh\big(\frac{x_2}{\cos\theta}\big)-\cos x_3.
\end{align*}
From this it follows immediately that the angle map in this context  $S_\theta \mapsto \theta$ is a diffeomorphism. A corresponding result for the family ${\mathcal M}_4^{even}$ is of course more difficult since no explicit formula is available in this case.

We will explore further  the analogy of our result with the theory of minimal surfaces in $\R^3$, now in the context of the classification of the four ended solutions in Theorem \ref{classification}. The corresponding problem can be stated as follows: if  $S$ is an embedded, singly periodic,  minimal surface with $4$ Scherk ends, what can be said about this surface ? It is proven by Meeks and Wolf \cite{MR2276776} that $S$ must be one of the Scherk surfaces $S_\theta$ described above (similar result is proven in \cite{MR2262839} assuming additionally that the genus of $S$ in the quotient $\R^3/{\mathbb Z}$ is $0$). The key results to prove this general statement are  in fact the counterparts of  Theorem \ref{teo uniqueness} and  Theorem \ref{part I}.

We now sketch the basic elements in the proofs of Theorem \ref{teo uniqueness}. First of all let us explain the existence result in \cite{MR2557944}. The point of departure of the construction is the following Toda system 
\begin{equation}
\begin{aligned}
{{c_0}} \, q_1''  &=  - e^{\sqrt {F''(1)} ( q_{1}- q_{2}) },\\
{{c_0}}\, q_2''  &= e^{\,\sqrt {F''(1)} (q_{1}- q_2)},
\end{aligned}
\label{toda 02}
\end{equation}
for which $q_1<0<q_2$ and $q_1(x)=-q_2(x)$, as well as $q_j(x)=q_j(-x)$, $j=1,2$. Here $c_0$ is a fixed constant depending on $F$ only (when $F(u)=\frac{1}{4}(1-u^2)^2$, then  $c_0=\frac{\sqrt{2}}{24}$). Any solution of this system is asymptotically linear, namely:
\[
q_{j}(x)=(-1)^j (m |x|+b)+{\mathcal O}(e^{\, -2\sqrt{F''(1)}m |x|}), \quad x\to \infty,
\]
where $m>0$ is the slope of the asymptotic straight line in the first quadrant.  On the other hand, given  that we only consider solutions whose trajectories are symmetric with respect to the $x$-axis, the value of the slope $m$ determines the unique solution of (\ref{toda 02}).  When the asymptotic lines become  parallel then $m\to  0$ or $m\to \infty$. By symmetry it suffices to consider the case $m\to 0$ and in this paper we will denote small slopes by $m=\ve$ and the corresponding solutions by $q_{\ve, j}$.  Note that if by $q_{1,j}$ we denote a solution with slope $m=1$ then 
\[
q_{\ve,j}(x)=q_{1, j}(\ve x)+\frac{(-1)^{j}}{\sqrt{F''(1)}}\log\frac{1}{\ve} .
\]
Then, the existence result in \cite{MR2557944} implies  that given a small $\ve$, there exists a four ended solution $u$  to (\ref{AC}) whose nodal set $N(u)$ is close to the trajectories of the Toda system given by the graphs $y=q_{\ve, j}(x)$. Although we do not use directly this result in the present paper but the idea of relating solutions of the Toda system and the four ended solutions of (\ref{AC}) that  comes from  \cite{MR2557944} is very important. In fact, what we want to achieve is to parametrize the manifold of four ended  solutions with almost parallel ends using corresponding solutions of the Toda system as parameters. To do this in Section \ref{nodal asymp} we obtain  a very precise control of  the nodal sets of the four ended solutions. The key observation is that in every quadrant the nodal  set  $N(u)$ of any four ended solution is a bigraph, and if we assume that the slope of its asymptotic lines is small then it is a graph of a smooth function, both in the lower and in the upper half plane. We have then 
\[
N(u)=\{(x,y)\in \R^2\mid y=f_{\ve, j}(x), \quad j=1,2, \quad f_{\ve, 1}(x)<0, \quad f_{\ve,2}(x)=-f_{\ve, 1}(x)\}, 
\]
for any $u\in {\mathcal M}_{4}^{even}$, with $\ve=\tan\theta(u)$.  Our main result in Section \ref{nodal asymp} says that  for each $\ve$ small
\[
f_{\ve, 1}(x)-q_{\ve, 1}(x)={\mathcal O}(\ve^\alpha e^{\, -\tau\ve|x|}), 
\]
with some positive constants $\alpha, \tau$. Next, we define   (Section \ref{toda aprox}) a suitable approximate four ended solution based on the solution of the Toda system with slope $\ve$. To explain this by $\widetilde\Gamma_{\ve, 1}$ we denote the graph of the function $y=q_{\ve, 1}(x)$, which is contained in the lower half plane. In a suitable neighborhood of the curve $\widetilde\Gamma_{\ve, 1}$ we introduce Fermi coordinates ${\tt x}=(x, y)\mapsto (x_1, y_1)$, where $y_1$ denotes  the signed distance to $\widetilde\Gamma_{\ve, 1}$, and $x_1$ is the $x$ coordinate of the projection of the point ${\tt x}$ onto $\widetilde\Gamma_{\ve, 1}$. With this notation we write locally the solution $u$, with $\ve=\tan \theta(u)$ in the form
\[
u({\tt x})=H(y_1-h_{\ve}(x_1))+\phi.
\]
This definition is suitably adjusted to yield a globally defined  function. 
Then it  is proven in Section \ref{toda aprox} that   $h_\ve\colon \R\to \R$ and $\phi\colon\R^2\to \R$ are  small functions, of order ${\mathcal O}(\ve^\alpha)$ in some weighted norms. 

Finally in Section \ref{last step} we consider two solution $u_1$, and $u_2$ such that $\tan\theta(u_1)=\tan\theta(u_2)=\ve$. We use the results of the previous section to prove that the function $u_1-u_2$ is Lipschitz and its norm can be controlled by the norms of $h_{1, \ve}-h_{2, \ve}$ and $\phi_1-\phi_2$ multiplied by a small constant. But on the other hand, using an argument similar to Lyapunov-Schmidt reduction we show that the difference  $h_{1,\ve}-h_{2, \ve}$ is controlled $u_1-u_2$, which eventually implies that $h_{1, \ve}=h_{2, \ve}$, and thus yields uniqueness. 

\section{Preliminaries}\label{sec prelim}

In this section we collect some facts about the Allen-Cahn equation which will
be used later on.
\subsection{Refined asymptotics theorem for  four ended solutions}\label{summary moduli}

Let $H(x)$ be the  heteroclinic solution on the Allen-Cahn equation and let us denote $\alpha_0=\sqrt{F''(1)}$. It is known that we have asymptotically:
\[
H(x)=1-Ae^{\,-\alpha_0 x}+{\mathcal O}(e^{\,-2\alpha_0 x}), \quad H'(x)=A\alpha_0 e^{\,-\alpha_0 x}+{\mathcal O}(e^{\,-2\alpha_0 x}),\quad x\to \infty,
\]
with similar estimates when $x \to-\infty$.

We consider  the linearized operator
\begin{align*}
L_0\phi=-\phi''+F''(H)\phi.
\end{align*}
It is known that the principal eigenvalue of this operator $\mu_0=0$ and the corresponding eigenfunction is $H'$. In general, the operator  $L_0$ has, possibly infinite, discrete spectrum $0<\mu_1<\dots\leq \alpha_0^2$, and the continuous spectrum which is $[\alpha_0^2,\infty)$, $\alpha_0=\sqrt{F''(1)}$. It may also happen that $L_0$ has just one eigenvalue, $\mu_0=0$ and the continuous spectrum, in which case we will set $\mu_1=\alpha_0^2$.

Next, we recall some facts about the moduli space theory developed in
\cite{dkp-2009}. We will mostly use this theory in  the case of four end solutions, thus we will restrict the presentation to this situation only.
We keep the notations introduced above. Thus we let
\[
\lambda = (\lambda_1, \ldots, \lambda_{4} ) \in \Lambda_{4},
\]
we write $ \lambda_j^+ = {\tt x}_j + \mathbb R^+ \, {\tt e}_j$ as in (\ref{eq:halfline}). We denote by $\Omega_0, \ldots , \Omega_{4}$ the decomposition of $\mathbb R^2$ associated to this $4$  half affine lines and  $\mathbb I_0, \ldots, \mathbb I_{4}$ the partition of unity subordinate to this partition. Given  $\gamma, \delta \in \mathbb R$, we define a {\em weight function} $\Gamma_{\gamma, \delta}$ by
\begin{equation}
\Gamma_{\gamma, \delta} ( {\tt x} ) : = {\mathbb I}_0 ({\tt x}) + \sum_{j=1}^{4} \,  {\mathbb I}_j  ({\tt x})  \, e^{{\gamma} \, ({\tt x} - {\tt x}_j) \cdot {\tt e}_j } \, \left( \cosh ( ({\tt x} - {\tt x}_j) \cdot {\tt e}_j^\perp ) \right)^{{\delta}} ,
\label{weight}
\end{equation}
so that, by construction, $\gamma$ is the rate of decay or blow up along the half lines $\lambda_j^+$ and $\delta$ is the rate of decay or blow up in the direction orthogonal to  $\lambda_j^+$.

With this definition in mind, we define the weighted Lebesgue space
\begin{equation}
L_{\gamma, \delta}^2 (\mathbb R^2) : =  \Gamma_{\gamma, \delta}  \, L^2 (\mathbb R^2) ,
\label{l2weight}
\end{equation}
and the weighted Sobolev space
\begin{equation}
W_{\gamma, \delta}^{2,2} (\mathbb R^2) : =  \Gamma_{\gamma, \delta} \, W^{2,2} (\mathbb R^2).
\label{S2weight}
\end{equation}
Observe that, even though this does not appear in the notations, the partition of unity, the weight  function and the induced weighted spaces  all depend on the choice of $\lambda \in \Lambda_{4}$.

Our first result shows that, if $u$ is a solution of (\ref{AC}) which is close to  $u_\lambda$ (in $W^{2,2}$ topology) then $ u -u_\lambda$ tends to $0$ exponentially fast at infinity.
\begin{proposition}[Refined Asymptotics]\label{refined asymp}
Assume that $u \in \mathcal S_{4}$ is a solution of (\ref{AC})  and define  $\lambda  \in \Lambda_{4}$, so that
\[
u - u_\lambda \in W^{2,2}  (\mathbb R^2) .
\]
Then, there exist $ \delta \in (0, \alpha_0 )$, $\alpha_0=\sqrt{F''(1)}$,  and  $\gamma > 0$ such that
\begin{equation}
u - u_\lambda \in W^{2,2}_{-\gamma,  -\delta}  (\mathbb R^2) .
\label{eq:refined-2}
\end{equation}
More precisely, $ \delta > 0$ and $ \gamma > 0$ can be chosen so that
\begin{align}
\label{bargammadelta}
\gamma \in (0, \sqrt{\mu_1})  , \qquad  \gamma^2 +  \delta^2 < \alpha_0^2 \qquad \mbox{and}  \qquad \alpha_0 >  \delta + \gamma  \, \cot \theta_\lambda  ,
\end{align}
where $ \theta_\lambda$ is equal to the half of the  minimum of the angles between two consecutive oriented affine lines $\lambda_1, \ldots, \lambda_{4}$ (see~(\ref{min angle})) and $\mu_1$ is the second eigenvalue of the operator $L_0$ (or $\mu_1=\alpha_0^2$ if $0$ is the only eigenvalue).
\end{proposition}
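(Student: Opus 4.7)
Set $\phi := u - u_\lambda \in W^{2,2}(\mathbb{R}^2)$; by Sobolev embedding $\phi$ tends to zero at infinity. It satisfies
\[
L\phi := \Delta \phi - F''(u_\lambda)\,\phi = -E_\lambda + N(\phi),
\]
with $E_\lambda := \Delta u_\lambda - F'(u_\lambda)$ and $N(\phi) := -\bigl(F'(u_\lambda+\phi) - F'(u_\lambda) - F''(u_\lambda)\phi\bigr) = O(\phi^2)$. By construction $u_\lambda$ coincides with a signed heteroclinic on each $\Omega_j$ away from the cutoff overlap, so $E_\lambda$ is supported near $\partial B_R$ and in the wedges between adjacent half-lines, where $u_\lambda$ is within $O(e^{-\alpha_0\,\mathrm{dist}})$ of $\pm 1$; hence $E_\lambda$ decays at every rate strictly smaller than $\alpha_0$ and lies in $L^2_{-\gamma',-\delta'}$ for every $\gamma',\delta'$ admissible in the sense of~(\ref{bargammadelta}). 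The plan has three steps: (i) build a positive barrier $W$ equivalent to $\Gamma_{-\gamma,-\delta}$ satisfying $LW \le -\eta W$ outside a large disk $B_{R_0}$; (ii) deduce the pointwise bound $|\phi|\le CW$ by maximum principle; (iii) upgrade to the weighted $W^{2,2}$ bound by local elliptic regularity.

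For step (i), in each $\Omega_j$ adopt the Cartesian coordinates $s_j := ({\tt x}-{\tt x}_j)\cdot{\tt e}_j$, $t_j := ({\tt x}-{\tt x}_j)\cdot{\tt e}_j^\perp$, so that $\Gamma_{-\gamma,-\delta} = e^{-\gamma s_j}(\cosh t_j)^{-\delta}$; an elementary computation gives
\[
\frac{L\,\Gamma_{-\gamma,-\delta}}{\Gamma_{-\gamma,-\delta}} = \gamma^2 - \delta + \delta(\delta+1)\tanh^2 t_j - F''(u_\lambda) + o(1),
\]
where the $o(1)$ absorbs the partition-of-unity overlap and the exponentially small difference between $F''(u_\lambda)$ and $F''(H(t_j))$. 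As $|t_j|\to\infty$ this approaches $\gamma^2 + \delta^2 - \alpha_0^2 < 0$ by the first half of~(\ref{bargammadelta}). On the compact transverse strip $|t_j|\lesssim 1$, where $F''(H(t_j))$ may be negative, the spectral bound $\gamma < \sqrt{\mu_1}$ yields a strictly positive function $\chi(t_j)$, pinched between positive multiples of $(\cosh t_j)^{-\delta}$, solving $(-\partial_t^2 + F''(H(t)) - \gamma^2)\chi \ge c\chi$; multiplying $\Gamma_{-\gamma,-\delta}$ by $\chi/(\cosh t_j)^{-\delta}$ turns the weight into a bona fide supersolution across the nodal strip without changing its class. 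Finally, in the overlap wedge between $\Omega_j$ and $\Omega_{j+1}$, a point at transverse distance $\rho$ from $\lambda_j^+$ has $s_j \sim \rho\cot\theta_\lambda$, so $\Gamma_{-\gamma,-\delta}$ decays there at worst like $e^{-(\delta+\gamma\cot\theta_\lambda)\rho}$; applying $\Delta$ produces a factor $(\delta+\gamma\cot\theta_\lambda)^2$, and the condition $\alpha_0 > \delta + \gamma\cot\theta_\lambda$ is exactly what makes this bounded by $F''(u_\lambda) \approx \alpha_0^2$ throughout the wedge.

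For step (ii), on $\partial B_{R_0}$, $|\phi|$ is bounded and $W$ is bounded below by a positive constant, so $|\phi|\le CW$ there for $C$ large; at infinity both $\phi$ and $W$ vanish; and in between,
\[
L(CW \pm \phi) \le -C\eta W \pm (E_\lambda - N(\phi)) \le 0
\]
for $R_0$ large enough, because $|E_\lambda|$ decays faster than $W$ and $|N(\phi)|\le C\|\phi\|_\infty |\phi|$ is of lower order than $\eta W$. The maximum principle then gives $|\phi|\le CW$ in $\mathbb{R}^2\setminus B_{R_0}$, and globally after enlarging $C$. For step (iii), running the construction with slightly larger $\gamma', \delta'$ still satisfying~(\ref{bargammadelta}) yields $|\phi|\le C\,\Gamma_{-\gamma',-\delta'}$, hence $\phi\in L^2_{-\gamma,-\delta}$; interior $W^{2,p}$ elliptic estimates on unit balls applied to $\Delta\phi = F'(u_\lambda+\phi) - F'(u_\lambda) + E_\lambda$, together with the boundedness of the oscillation of the weight on unit balls, deliver the sought $W^{2,2}_{-\gamma,-\delta}$ bound.

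The main obstacle is step (i): producing a supersolution equivalent to $\Gamma_{-\gamma,-\delta}$ that works uniformly across the three geometric regimes—the far-field of an end, the nodal transition strip, and the overlap wedge between two adjacent ends. Each of the three conditions in~(\ref{bargammadelta}) governs exactly one of these regimes, and the most delicate is the transition strip, where the presence of the zero mode $H'$ of the Jacobi operator $L_0$ forces us to lean on the spectral gap $\gamma < \sqrt{\mu_1}$ to produce the required positive supersolution.
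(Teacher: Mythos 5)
The paper states Proposition~\ref{refined asymp} without proof (it is taken from \cite{dkp-2009}), so there is no internal argument to compare against. Your overall architecture (supersolution barrier for $\Gamma_{-\gamma,-\delta}$, maximum principle on an exterior domain, then elliptic regularity in the weighted space) is the natural one, and your computation of $L\Gamma_{-\gamma,-\delta}/\Gamma_{-\gamma,-\delta}$ in the far field and the wedge analysis correctly identify which of the three constraints in~(\ref{bargammadelta}) governs which geometric regime. However, there is a genuine gap in step~(i) at the nodal transition strip, and it is exactly where you flag the difficulty in your last paragraph.

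You assert that the spectral bound $\gamma<\sqrt{\mu_1}$ produces a strictly positive $\chi(t)$, comparable to $(\cosh t)^{-\delta}$, with
\[
(L_0-\gamma^2)\chi\;\ge\;c\,\chi>0\qquad\text{on }\mathbb R .
\]
No such $\chi$ exists. Indeed take $\psi=H'>0$, which satisfies $L_0\psi=0$ and decays like $e^{-\alpha_0|t|}$. The Picone computation gives
\[
\frac{d}{dt}\Bigl(\frac{\psi^2}{\chi}\,\chi'-\psi\psi'\Bigr)
=-\Bigl(\psi'-\frac{\psi\chi'}{\chi}\Bigr)^2+\frac{\psi^2}{\chi}\bigl(\chi''-F''(H)\chi\bigr)-\psi\bigl(\psi''-F''(H)\psi\bigr)
\le -(\gamma^2+c)\,\psi^2 .
\]
Because $\chi$ is pinched between multiples of $(\cosh t)^{-\delta}$, the quantity $\chi'/\chi$ is bounded and the left boundary terms vanish upon integration over $\mathbb R$, while the right-hand side integrates to $-(\gamma^2+c)\|H'\|_{L^2}^2<0$. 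This is a contradiction, reflecting the general fact that a positive supersolution of $L_0-E$ on $\mathbb R$ forces $\inf\sigma(L_0)\ge E$, whereas $\inf\sigma(L_0)=0$ with eigenfunction $H'$. The spectral gap $\mu_1-\gamma^2>0$ is only available after projecting away from $H'$; it does not produce a positive function of the kind your barrier requires, because $\phi$ itself carries an $H'$-component along each end.

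The standard repair, which your own closing paragraph points towards without executing, is to decompose $\phi$ along each end as $\phi(s,t)=a(s)H'(t)+\phi^\perp(s,t)$ with $\int\phi^\perp(s,\cdot)H'\,dt=0$. For the orthogonal piece $\phi^\perp$ the quadratic form of $L_0$ is bounded below by $\mu_1>\gamma^2$ on the relevant subspace, and a barrier/energy argument in the weight $e^{-\gamma s}(\cosh t)^{-\delta}$ goes through; this is where the condition $\gamma<\sqrt{\mu_1}$ actually enters. For the kernel piece, projecting the equation onto $H'$ yields a second-order ODE $a''(s)=\text{(forcing)}$ with forcing that decays exponentially (at a rate governed by the remaining constraints in~(\ref{bargammadelta}) and by a bootstrap), and the assumption $u-u_\lambda\in W^{2,2}$ forces $a,a'\to 0$, which pins down the two constants of integration and converts this into exponential decay of $a$. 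Only after this splitting do the three inequalities of~(\ref{bargammadelta}) each do their intended job; a single scalar supersolution cannot absorb the zero mode.

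A smaller remark: in step~(ii) you implicitly assume that the $o(1)$ correction to the potential and the cubic tail $N(\phi)$ can be made uniformly dominated by the supersolution defect $\eta W$ after choosing $R_0$ large. This is fine once $W$ is an honest supersolution, but the choice of $R_0$ must be made after the $H'$-component has been dealt with, because on the nodal strip the potential $F''(u_\lambda)$ is negative no matter how large $R_0$ is.
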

We observe here that it is well known that for any solution of (\ref{AC}) the following is true: if by $N(u)$ we will denote the nodal set of $u$ and by ${\mathrm d}(N(u), {\tt x})$ the distance of ${\tt x}$ to $N(u)$ then
\begin{align}
\label{exp est 1}
|u({\tt x})^2-1|+|\nabla u({\tt x})|+|D^2 u({\tt x})|\leq C e^{\,-\beta{\mathrm d}(N(u), {\tt x})},
\end{align}
where $\beta>0$. This type of estimate is relatively easy to obtain using a comparison argument. On the other hand, the estimate (\ref{eq:refined-2}) is non trivial.

\subsection{The balancing formulas}
We will now describe briefly the balancing formulas for $4$ ended solutions in the form they were introduced in \cite{dkp-2009}. Assume that $u$ is a solution of (\ref{AC}) which is defined in $\mathbb R^2$. Assume that $X$ and $Y$  are two vector fields also defined in $\mathbb R^2$.  In coordinates, we can write
\[
X = \sum_j X^j \partial_{x_j},  \qquad \qquad Y =  \sum_j Y^j \partial_{x_j},
\]
and, if $f$ is a smooth function, we use the following notations
\[
X(f) : = \sum_j X^j \, \partial_{x_j} f,   \qquad \qquad  \nabla f : = \sum_j \partial_{x_j} f  \, \partial_{x_j} ,
\]
\[
\mbox{\rm div} \, X : =  \sum_i \partial_{x_i} X^i ,
\]
and
\[
{\rm d}^* X : = \frac{1}{2} \sum_{i,j} ( \partial_{x_i} X^j  +  \partial_{x_j} X^i  ) \,   {\rm d}x_i  \otimes   {\rm d}x_j  ,
\]
so that
\[
{\rm d}^*  X  \,  (Y,Y) =  \sum_{i,j}  \partial_{x_i} X^j  \, Y^i \, Y^j  .
\]

We claim that~:
\begin{lemma} [Balancing formula]
The following identity holds
\[
\begin{array}{rlllll}
\displaystyle \mbox{\rm div}  \left(  \left( \frac12  | \nabla u |^2  +  F(u) \right)  X  -  X(u)  \nabla u  \right) = \displaystyle  \left( \frac12  | \nabla u|^2  + F(u) \right)  \mbox{\rm div} \, X -  {\rm d}^* X (\nabla u ,\nabla u) .
\end{array}
\]
\label{le:PI}
\end{lemma}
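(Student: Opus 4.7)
The identity is purely algebraic, so the plan is to expand both terms on the left hand side using the product rule and then to invoke the equation $\Delta u = F'(u)$ at the very end; no analytic input beyond that is needed.

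First I would write
\[
\mathrm{div}\Bigl(\bigl(\tfrac12|\nabla u|^2 + F(u)\bigr)X\Bigr) = \bigl(\tfrac12|\nabla u|^2 + F(u)\bigr)\,\mathrm{div}\, X + X\bigl(\tfrac12|\nabla u|^2 + F(u)\bigr),
\]
and compute the directional derivative component by component:
\[
X\bigl(\tfrac12|\nabla u|^2\bigr) = \sum_{i,j} X^i\, \partial_{x_i}\partial_{x_j} u \,\partial_{x_j} u, \qquad X(F(u)) = F'(u)\, X(u).
\]
In parallel, for the second term of the left hand side I would use
\[
\mathrm{div}(X(u)\,\nabla u) = \nabla(X(u))\cdot \nabla u + X(u)\,\Delta u,
\]
and expand
\[
\partial_{x_i}(X(u)) = \sum_j (\partial_{x_i} X^j)\,\partial_{x_j} u + \sum_j X^j\,\partial_{x_i}\partial_{x_j} u.
\]

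Next I would collect everything. The Hessian terms, namely $\sum_{i,j} X^i \partial_{x_i}\partial_{x_j} u\,\partial_{x_j} u$ arising from $X\bigl(\tfrac12|\nabla u|^2\bigr)$ and $\sum_{i,j} X^j \partial_{x_i}\partial_{x_j} u\,\partial_{x_i} u$ arising from $\nabla(X(u))\cdot\nabla u$, are equal by a relabeling of indices and therefore cancel in the difference. The remaining first-derivative-of-$X$ term is exactly
\[
\sum_{i,j}(\partial_{x_i} X^j)\,\partial_{x_j} u\,\partial_{x_i} u,
\]
which, by symmetry of the tensor $\partial_{x_i} u\, \partial_{x_j} u$, coincides with $\mathrm{d}^* X(\nabla u, \nabla u)$ as defined just above the lemma.

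Finally I would treat the $F$-dependent leftovers, $F'(u) X(u) - X(u)\,\Delta u = X(u)\bigl(F'(u) - \Delta u\bigr)$, which vanishes precisely because $u$ solves \eqref{AC}. Combining the surviving contributions yields the claimed identity. There is no real obstacle: the only subtle point is bookkeeping to confirm that the two Hessian sums agree after renaming indices and that the remaining first-order piece is symmetric, so that it can legitimately be written as $\mathrm{d}^* X(\nabla u,\nabla u)$ rather than as the (a priori non-symmetric) tensor $\partial_{x_i}X^j\,\partial_{x_i} u\, \partial_{x_j} u$.
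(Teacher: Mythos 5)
Your proposal is correct and is exactly the direct computation the paper alludes to with the one‐line ``This follows from direct computation''; you expand both divergences, cancel the matching Hessian terms by relabeling, recognize the surviving first‑order term as $\mathrm{d}^*X(\nabla u,\nabla u)$, and discard $X(u)(F'(u)-\Delta u)$ using the Allen--Cahn equation (which the paragraph preceding the lemma has already assumed). The only cosmetic slip is the opening claim that the identity is ``purely algebraic'': it is not, since as you yourself show the step killing $X(u)(F'(u)-\Delta u)$ genuinely uses that $u$ solves \eqref{AC}.
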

\begin{proof} This follows from direct computation.\end{proof}

Translations of $\mathbb R^2$ correspond to the constant  vector field
\[
X : = X_0
\]
where $X_0$ is a fixed vector,  while rotations correspond to the vector field
\[
X : = x \, \partial_y - y \, \partial_x .
\]
In either case, we have $\mbox{\rm div} \, X =0$ and ${\rm d}^* \, X =0$. Therefore, we  conclude that
\[
\mbox{\rm div} \, \left(  \left( \frac12 \, | \nabla u |^2  +  F(u) \right)  \, X  -  X(u) \, \nabla u  \right)  = 0  ,
\]
for these two vector fields. The divergence theorem implies that
\begin{equation}
\int_{\partial \Omega}  \left( \left( \frac12 \, | \nabla u |^2  +  F(u) \right)  \, X  -  X(u) \, \nabla u \right) \cdot \nu  \,  ds = 0  ,
\label{eq:lsr}
\end{equation}
where $\nu$ is the (outward pointing) unit normal vector field to $\partial \Omega$.

To see how this identity is applied let us fix a unit vector ${\tt e}\in \R^2$ and let $X={\tt e}$. For any $s\in \R$ we consider a straight line $L_s=\{{\tt x}\in \R^2\mid {\tt x}=s {\tt e}+t{\tt e}^\perp, t\in \R\}$. Then we get:
\begin{align}
\label{hamilt 1}
\int_{L_s} \big[\frac{1}{2}|\nabla u|^2-|\nabla u\cdot{\tt e}|^2+F(u)\big]\,dS=const.,
\end{align}
for any $4$ end solution $u$ of (\ref{AC}), as long as the direction of $L_s$ does not coincide with that of any end, i.e. ${\tt e}\neq {\tt e_j}$, $j=1, \dots, 4$. In a particular case ${\tt e}=(0,1)$ we get a {\it Hamiltonian  identity} \cite{MR2381198}:
\begin{align}
\label{hamilt 2}
\int_{y=s} \big[\frac{1}{2}(\partial_x u)^2-\frac{1}{2}(\partial_y u)^2 + F(u)\big]\,dx=const.  . 
\end{align}

\subsection{Summary of the existence result for small angles in \cite{MR2557944}}\label{exists small eps}

To state the existence  result in precise way, we assume that we are given an even symmetric solution of the {Toda system} (\ref{toda 02}) represented by a pair of functions $q_1(t)<0<q_2(t)$, where $q_1(t)=-q_2(t)$ as well as $q_1(t)=q_1(-t)$. In addition let us assume that the slope of $q_1$ at $\infty$ is $-1$. Then,  asymptotically we have:
\[
q_{j}(x)=(-1)^j (|x|+b)+{\mathcal O}(e^{\, -2\sqrt{F''(1)}|x|}), \quad x\to \infty,
\]
Given $\varepsilon>0$, we define the the vector valued function
${\bf q}_{\varepsilon}$, whose components are given by
\begin{equation}
q_{j, \varepsilon} (x)  : =  q_{j}(\varepsilon \, x )  
+\frac{(-1)^j}{\sqrt {F''(1)}}  \log \frac{1}{\varepsilon}   \, .
\label {falpha}
\end{equation}
It is easy to check that the $q_{j,\varepsilon}$ are again solutions of (\ref{toda 02}).

\medskip

Observe that, according to the description of asymptotics the functions $q_j$, the 
graphs of the functions $q_{j,\varepsilon}$ are asymptotic to oriented half lines with slope $\ve$ at  
infinity. In addition, for $\varepsilon >0$ small enough, these graphs are disjoint and 
in fact their mutual distance is given by $ \frac{2}{\sqrt{F''(1)}} \,  \log\frac{1}{\varepsilon} + \mathcal O (1)$
as $\varepsilon$ tends to $0$. 

\medskip

It will be convenient to agree that $\chi^+$ (resp. $\chi^-$) is a smooth 
cutoff function defined on $\mathbb R$ which is identically equal to $1$ 
for $x >1$ (resp. for $x < -1$) and identically equal to $0$ for $x < -1$  
(resp. for $x > 1$) and additionally $\chi^-+\chi^+\equiv 1$.  With these 
cutoff functions at hand, we define the $4$ dimensional space
\begin{equation}
D : =  {\rm Span} \, \{x \longmapsto  \chi^\pm (x) , \,  x\longmapsto  x \, \chi^\pm  (x) \} \, ,
\label{D}
\end{equation}
and, for all $\mu \in (0,1)$ and all $\tau \in \mathbb R$,  we define 
the space  ${\mathcal C}^{2, \mu}_\tau (\mathbb R)$ of $\mathcal C^{2 , \mu}$ 
functions $r$ which satisfy 
\[
\| r\|_{\mathcal C^{\ell , \mu}_\tau (\mathbb R)}  : =   \|Ê (\cosh x)^{\tau} \,  r \|_{\mathcal C^{\ell , \mu} (\mathbb R )}  \,   < \infty \, .
\]
Keeping in mind the above notations, we have the~:
\begin{theorem}\label{teo1}
For all  $\varepsilon>0$ sufficiently small, there exists an entire solution 
$u_\varepsilon$ of the Allen-Cahn equation $\equ{AC}$ whose nodal set 
is the union of $2$ disjoint curves $\wt \Gamma_{1, \varepsilon }, \wt
\Gamma_{2, \varepsilon }$ which  are the graphs of the functions
\[
x \longmapsto   q_{j,\varepsilon } (x)  + r_{j, \varepsilon} (\varepsilon \, x)  \, ,
\]
for some functions $r_{j, \varepsilon } \in \mathcal C^{2, \mu}_{\tau}
 (\mathbb R) \oplus D$ satisfying
\[
\| r_{j, \varepsilon} \|_{\mathcal C^{2, \mu}_{\tau } (\mathbb R) \oplus D} 
 \leq C \,  \varepsilon^{\alpha} \, . 
\] 
for some constants $C , \alpha , \tau , \mu >0$ independent of $\varepsilon >0$.
\end{theorem}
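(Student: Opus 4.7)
My plan is to construct $u_\varepsilon$ by an infinite dimensional Lyapunov--Schmidt reduction, using the Toda trajectories $q_{j,\varepsilon}$ to build an accurate approximate solution and then correcting both the nodal curves and the profile.

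\textbf{Approximate solution.} First I build an approximate solution $u_\varepsilon^{app}$ whose two nodal curves are the graphs $\widetilde\Gamma_{j,\varepsilon}$ of $x\mapsto q_{j,\varepsilon}(x)+r_{j,\varepsilon}(\varepsilon x)$, where the perturbation functions $r_{j,\varepsilon}\in \mathcal{C}^{2,\mu}_\tau(\mathbb{R})\oplus D$ are free parameters to be determined. In a tubular neighborhood of $\widetilde\Gamma_{j,\varepsilon}$ I introduce Fermi coordinates $(x_j,y_j)$, where $y_j$ is the signed distance to $\widetilde\Gamma_{j,\varepsilon}$, and set the local profile to $(-1)^j H(y_j)$. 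Gluing the two local profiles to the constants $\pm 1$ by means of a partition of unity subordinate to the decomposition of $\mathbb{R}^2$ induced by the curves (analogous to~\eqref{def w}), I obtain $u_\varepsilon^{app}$.

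\textbf{Error estimate.} The key algebraic identity is that the leading order of the error $E_\varepsilon:=\Delta u_\varepsilon^{app}-F'(u_\varepsilon^{app})$, computed by expanding in $y_j$ and using $H''=F'(H)$, is of the form $H'(y_j)$ times the Jacobi operator on the curve, plus exponentially small interaction terms. The Toda system~\eqref{toda 02} is precisely the condition that these two contributions (curvature of the graph plus exponential interaction through $e^{-\alpha_0|q_1-q_2|}$) cancel at leading order when $r_{j,\varepsilon}\equiv 0$. Rescaling by $\varepsilon$ and using the exponential decay of $H'$, I obtain $\|E_\varepsilon\|_{L^2_{-\gamma,-\delta}}=\mathcal{O}(\varepsilon^\alpha)$ in weights adapted to the curves.

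\textbf{Linearized analysis and projection.} Write $u_\varepsilon=u_\varepsilon^{app}+\phi$. The linearized operator $\mathcal{L}_\varepsilon:=\Delta - F''(u_\varepsilon^{app})$ has, modulo exponentially small terms, an approximate kernel spanned along each $\widetilde\Gamma_{j,\varepsilon}$ by $H'(y_j)$ times a function of $x_j$; these are the modes corresponding to normal deformations of the curves. I impose the $L^2$ orthogonality condition
\begin{equation}
\int_{\mathbb{R}} \phi(x_j,y_j)\,H'(y_j)\,dy_j=0 \foral j=1,2,
\label{eq:ortho}
\end{equation}
and solve the projected equation for $\phi$ by a contraction mapping argument. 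The needed a priori estimate is invertibility of $\mathcal{L}_\varepsilon$ on the orthogonal complement in the weighted space $W^{2,2}_{-\gamma,-\delta}$, which follows from the spectral gap $\mu_1>0$ of $L_0$ transversally to each curve together with standard barrier/separation-of-variables arguments exploiting $\theta_\lambda$-type transversality between the two ends in each quadrant. This yields a unique $\phi=\phi(r_{1,\varepsilon},r_{2,\varepsilon})$ with $\|\phi\|_{W^{2,2}_{-\gamma,-\delta}}\leq C\varepsilon^\alpha$.

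\textbf{Reduced problem.} Projecting the full equation $\Delta(u_\varepsilon^{app}+\phi)=F'(u_\varepsilon^{app}+\phi)$ against $H'(y_j)$ in the $y_j$-variable produces a $2\times 2$ coupled system of ODEs for $(r_{1,\varepsilon},r_{2,\varepsilon})$, which after the change of variable $t=\varepsilon x$ is a small perturbation of the Jacobi operator of the Toda system~\eqref{toda 02} linearized at $(q_1,q_2)$. The kernel of this Jacobi operator is exactly $4$-dimensional, spanned by translations ($t\mapsto 1$ on each half-line) and rescalings ($t\mapsto t$ on each half-line), which is why $D$ appears in the target space. By the standard Fredholm theory for such Jacobi operators in exponentially weighted spaces $\mathcal{C}^{2,\mu}_\tau(\mathbb{R})$, the Jacobi operator is an isomorphism from $\mathcal{C}^{2,\mu}_\tau(\mathbb{R})\oplus D$ to $\mathcal{C}^{0,\mu}_\tau(\mathbb{R})$, and another contraction mapping gives $r_{j,\varepsilon}$ with $\|r_{j,\varepsilon}\|_{\mathcal{C}^{2,\mu}_\tau(\mathbb{R})\oplus D}\leq C\varepsilon^\alpha$.

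\textbf{Main obstacle.} The delicate step is the linear theory for $\mathcal{L}_\varepsilon$ in the weighted spaces: one must simultaneously control the $y_j$-direction (using the spectral gap of $L_0$ which kills everything $L^2$-orthogonal to $H'$) and the $x_j$-direction (where the relevant operator is essentially $\partial_x^2$ on $\mathbb{R}$, with no decay), while the two ends nearly touch through the exponentially small interaction encoded in the Toda system. Getting $\varepsilon$-uniform estimates requires choosing the weights $(\gamma,\delta)$ compatible both with the refined asymptotics of Proposition~\ref{refined asymp} and with the slow $\varepsilon$-separation of the curves; this is what forces the small loss $\varepsilon^\alpha$ rather than $\varepsilon$ in the final estimate.
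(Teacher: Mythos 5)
The paper does not prove Theorem \ref{teo1}: it is reproduced as a citation of the existence result of \cite{MR2557944} (see the heading of Section 2.3) and is used as a black box in the rest of the argument. Your proposal is, however, an accurate summary of the strategy actually carried out in that reference: an approximate solution built from $H$ in Fermi coordinates about the perturbed Toda graphs, an infinite-dimensional Lyapunov--Schmidt projection against $H'(y_j)$, linear estimates resting on the spectral gap of $L_0$ in the transverse variable, and a reduced problem which is a small perturbation of the linearized Toda (Jacobi) operator, solved by a fixed point in $\mathcal{C}^{2,\mu}_\tau(\mathbb{R})\oplus D$. So your route is neither different from nor contradicts what the present paper assumes; it matches the cited source.

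Two small imprecisions are worth correcting. First, $D$ is not ``the kernel of the Jacobi operator'': the homogeneous Jacobi fields (translation and dilation) do not lie in the exponentially weighted space $\mathcal{C}^{2,\mu}_\tau(\mathbb{R})$, so the kernel there is trivial; $D$ is a \emph{deficiency} space of bounded and linearly growing functions that one must adjoin to the domain in order to recover surjectivity of the Jacobi operator onto $\mathcal{C}^{0,\mu}_\tau(\mathbb{R})$. This is exactly the role it plays in (\ref{D}) and in the later Lemma on $\mathcal{P}$ in Section~\ref{sec teo uniqueness}, where after restricting to even functions the deficiency space reduces to the constants. Second, for the same reason $D$ is appended to the \emph{domain}, not the target space; your later sentence (the isomorphism $\mathcal{C}^{2,\mu}_\tau(\mathbb{R})\oplus D\to\mathcal{C}^{0,\mu}_\tau(\mathbb{R})$) is the correct version and supersedes the earlier one.
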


In other words, given a solution of the Toda system, we can find  
a one parameter family of $4$-ended solutions of \equ{AC}  which 
depend on a small parameter $\varepsilon > 0$. As $\varepsilon$ tends to $0$, the 
nodal sets of the solutions we construct become close to the graphs of the functions 
$q_{j ,\varepsilon }$.

\medskip

Going through the proof, one can be more precise about the description of the solution 
$u_\varepsilon$. If $\Gamma \subset \mathbb R^2$ is a curve in $\mathbb R^2$ which is the graph 
over the $x$-axis of some function, we denote by
$\mbox{Y} \, ( \cdot,  \Gamma ) $ the signed distance to $\Gamma$ 
which is positive  in the upper half of $\mathbb R^2 \setminus \Gamma$ 
and is negative in the lower half of $\mathbb R^2 \setminus \Gamma$. Then we have the~:
\begin{proposition}
The solution of \equ{AC} provided by Theorem \ref{teo1} satisfies
\[
\|  e^{ {\varepsilon} \, \hat \alpha \, |{\bf x}|} \,  (u_\varepsilon - u_\varepsilon^*)  \|_{L^\infty (\mathbb R^2)} 
\leq  C \,  \varepsilon^{\bar \alpha}Ê\, ,
\]
for some constants $C, \bar \alpha , \hat \alpha > 0$ independent of $\varepsilon$, where ${\bf  x}=(x,y)$ and 
\begin{equation}
u_\varepsilon^*  : = \sum_{\red{j=1}}^k (-1)^{j+1} \, H\big( {\rm Y} ( \cdot, 
\wt\Gamma_{j , \varepsilon}) \big) - \frac 12 ( (-1)^{k} +1 ) \,.
\label{eq:bua}
\end{equation}
\end{proposition}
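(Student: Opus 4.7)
The plan is to revisit the Lyapunov-Schmidt construction of $u_\varepsilon$ from \cite{MR2557944} and track quantitatively the exponential decay of the residual. Write $u_\varepsilon = u_\varepsilon^* + \phi_\varepsilon$, where $u_\varepsilon^*$ is defined by (\ref{eq:bua}) using the actual nodal curves $\wt\Gamma_{j,\varepsilon}$ produced by Theorem \ref{teo1}. Then $\phi_\varepsilon$ satisfies
\[
L_\varepsilon \, \phi_\varepsilon = -E_\varepsilon - Q(\phi_\varepsilon), \qquad L_\varepsilon := -\Delta + F''(u_\varepsilon^*),
\]
with $E_\varepsilon := \Delta u_\varepsilon^* - F'(u_\varepsilon^*)$ the approximation error and $Q(\phi) := F'(u_\varepsilon^*+\phi) - F'(u_\varepsilon^*) - F''(u_\varepsilon^*)\phi$ the quadratic remainder. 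The proposition is equivalent to bounding $\phi_\varepsilon$ in the weighted norm $\|\psi\|_\star := \|e^{\varepsilon\hat\alpha|{\bf x}|}\psi\|_{L^\infty(\R^2)}$ by $C\varepsilon^{\bar\alpha}$.

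The first step is the sharp error estimate $\|E_\varepsilon\|_\star \leq C\,\varepsilon^{\bar\alpha}$. Near $\wt\Gamma_{j,\varepsilon}$ one expands the Laplacian in Fermi coordinates; the one-dimensional part is annihilated by $\partial_t^2 - F'$, while the curvature of the nodal curve, of order $\varepsilon^2$ by $q''_{j,\varepsilon} = \varepsilon^2 q''_j(\varepsilon x)$, contributes a Jacobi-type term already of the desired size after application of the Toda equation. The remaining piece is the mismatch between the two heteroclinic profiles, bounded pointwise by $C e^{-\alpha_0\,\mathrm{dist}(\wt\Gamma_{j,\varepsilon},\wt\Gamma_{j\pm 1,\varepsilon})({\bf x})}$; since the curves separate at angle $\varepsilon$, one has
\[
\mathrm{dist}(\wt\Gamma_{1,\varepsilon},\wt\Gamma_{2,\varepsilon})({\bf x}) \,\geq\, c\,\varepsilon\,|{\bf x}| + \tfrac{2}{\alpha_0}\log\tfrac{1}{\varepsilon} - C,
\]
so this contribution absorbs any weight $e^{\varepsilon\hat\alpha|{\bf x}|}$ with $\hat\alpha < \alpha_0$ while leaving an extra factor $\varepsilon^{\bar\alpha}$.

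Next I would invert $L_\varepsilon$ in the $\star$-norm, modulo its approximate kernel, and close a contraction. Following \cite{MR2557944}, impose the orthogonality conditions
\[
\int_{\R}\phi_\varepsilon\bigl({\bf x}_j(s)+t\,\nu_j(s)\bigr)\,H'(t)\,dt = 0, \qquad s\in\R,\ j=1,2,
\]
where $({\bf x}_j(s),\nu_j(s))$ parametrizes $\wt\Gamma_{j,\varepsilon}$ and its unit normal; the projection of $L_\varepsilon\phi_\varepsilon + E_\varepsilon + Q(\phi_\varepsilon)$ onto $H'$ becomes the Toda system for the nodal curves up to errors already controlled in Theorem \ref{teo1}. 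On the orthogonal complement, $L_\varepsilon$ has a uniform spectral gap coming from the transverse gap $\alpha_0^2 - \mu_j$ of the one-dimensional operator $L_0$, so one gets $\|\phi\|_\star \leq C\varepsilon^{-\kappa}\|f\|_\star$ for some $\kappa\geq 0$. Combined with $\|Q(\phi)\|_\star \leq C\|\phi\|_\star^2$ and the sharp bound on $\|E_\varepsilon\|_\star$, a fixed point in a ball of radius $2C\varepsilon^{\bar\alpha-\kappa}$ gives the claim after a harmless relabeling of $\bar\alpha$.

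The main obstacle is the weighted linear step: one must invert $L_\varepsilon$ uniformly on functions decaying at the $\varepsilon$-dependent rate $e^{-\hat\alpha\varepsilon|{\bf x}|}$ while avoiding its approximate Jacobi kernel. The indicial roots of the linearized Toda Jacobi operator on each $\wt\Gamma_{j,\varepsilon}$ sit at $\pm\alpha_0\varepsilon$, so the ambient weight must be compared to them and forces $\hat\alpha$ strictly smaller than $\alpha_0$; moreover, the four asymptotically-linear Jacobi fields generated by rigid motions of $\R^2$ must be absorbed into the free parameters of the nodal curves, which is precisely the role of the finite-dimensional space $D$ appearing in Theorem \ref{teo1}. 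Once this bookkeeping is carried out, the remainder of the argument is standard elliptic perturbation in exponentially weighted norms.
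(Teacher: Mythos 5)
Your plan is in the right spirit: the proposition is indeed a restatement of estimates that fall out of the Lyapunov--Schmidt construction in \cite{MR2557944}, and the paper itself offers no proof beyond the remark ``going through the proof \ldots one can be more precise.'' Your error estimate for $E_\varepsilon$, the appeal to the $\varepsilon$-dependent weight $e^{\varepsilon\hat\alpha|{\bf x}|}$ versus the linear growth of $\mathrm{dist}(\wt\Gamma_{1,\varepsilon},\wt\Gamma_{2,\varepsilon})$, and the role of the transverse spectral gap of $L_0$ are all the right ingredients and match what the cited construction does.

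There is, however, a logical gap in the middle step. Once you define $\phi_\varepsilon := u_\varepsilon - u_\varepsilon^*$ with $u_\varepsilon$ the already-constructed solution and $u_\varepsilon^*$ built from its actual nodal curves, $\phi_\varepsilon$ is a \emph{fixed} function; you cannot ``impose'' the orthogonality conditions $\int_\R \phi_\varepsilon\,H'\,dt=0$, and in general they will not hold, because $u_\varepsilon^*$ is pinned to the nodal set rather than chosen so as to kill the $H'$-projection of the residual. Consequently the equation $L_\varepsilon\phi_\varepsilon=-E_\varepsilon - Q(\phi_\varepsilon)$ is an identity (both sides are determined), so the statement ``the projection \ldots becomes the Toda system for the nodal curves'' is vacuous in your setting: the projection is identically zero. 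For the same reason a contraction argument ``in a ball of radius $2C\varepsilon^{\bar\alpha-\kappa}$'' proves the \emph{existence} of a small solution, not that your given $\phi_\varepsilon$ is small; to conclude from a fixed point you would have to first know that $\phi_\varepsilon$ is small enough to sit inside the ball and satisfies the orthogonality, which is exactly what needs to be proved.

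The fix is to split $\phi_\varepsilon$ into a component in the approximate kernel and a component satisfying the orthogonality --- equivalently, to allow a small modulation $h_\varepsilon$ in $u_\varepsilon^*$, as the present paper does for $\bar u_\varepsilon$ in Section~\ref{proof {prop gamma}} via the orthogonality~(\ref{orto h}) and Lemma~\ref{exists h} --- estimate the modulation separately (using that the nodal curve of $u_\varepsilon$ is $q_{j,\varepsilon}+r_{j,\varepsilon}(\varepsilon\cdot)$ with $\|r_{j,\varepsilon}\|\lesssim\varepsilon^\alpha$ by Theorem~\ref{teo1}), and then apply the a priori estimate of Proposition~\ref{apriori}-type to the orthogonal part. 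Alternatively, and more in line with the paper's one-line ``proof,'' one can simply track the estimate through the construction of \cite{MR2557944}: there the solution is produced in the form (modulated approximation) $+\ \phi$ with $\phi$ controlled in a weighted norm of precisely the $\star$-type, and the passage from the construction's approximation to $u_\varepsilon^*$ costs only the modulation, which is $O(\varepsilon^\alpha)$ in the same norm. Either route closes the argument; what is missing from your writeup is the acknowledgment that $\phi_\varepsilon$ is not \emph{a priori} orthogonal and that the modulation piece must be estimated rather than assumed away.
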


\section{The nodal sets of solutions}\label{nodal asymp}

We know \cite{partI}  that the angle map on any connected component of the moduli space $\mathcal{M}_4$
of four end solutions is surjective, and that in particular it contains solutions whose nodal lines are almost parallel ($\theta(u)\approx 0$ or $\frac{\pi}{2}-\theta(u)\approx 0$). We recall also that, after a rigid motion,  any four ended solution is even symmetric \cite{2011arXiv1102.4022G} and thus we will always consider solutions in ${\mathcal M}^{even}_4$, which in particular satisfy (\ref{even}). To any solution $u\in {\mathcal M}^{even}_4$ we can associate the value of the angle map $\theta(u)$, which by definition is the asymptotic angle at $\infty$ between the nodal set of $u$ in the first quadrant  and the $x$-axis. Finally, by $N(u)$ we will denote in this paper the nodal set of $u\in{\mathcal M}^{even}_4$. Because of (\ref{even}), restricted to each quadrant, this set is a bigraph, and restricted to a half plane it is a graph. We are interested in solutions whose nodal lines are almost parallel at $\infty$ and, by symmetry, we can restrict our considerations to the case $\theta(u)\to 0$. In this case the nodal set consists of two components, one of  them is a graph of a smooth function in the lower  half plane and the other, which  is  symmetric with respect to the $x$-axis, is contained in the upper half plane. It is convenient to introduce a parameter $\ve=\tan\theta(u)$, which is small $\ve\to 0$, when $\theta(u)\to 0$.

\subsection{Basic properties of solutions with almost parallel ends}

It is expected that when the angle between the ends becomes smaller, i.e. $\theta(u)\to 0$ or 
$\theta(u)\to \frac{\pi}{2}$,  then the distance between  the nodal sets  becomes larger.   This is indeed the case and
could be seen from  lemma \ref{lema unique 1} to follow. With no loss of generality  we will restrict our considerations to the case $\theta(u)\to 0$. In the sequel by $Q_1$ we will denote the first quadrant in $\R^2$. 

\begin{lemma}\label{lema unique 1}
Suppose $u_{n}$ is a sequence of four end solutions such that $\theta({u_{n})%
}\rightarrow0,$ $p_{n}\in N\left(  u_{n}\right)  \cap\partial Q_{1}.$ Then
$\left\vert p_{n}\right\vert \rightarrow+\infty,$ as $n\rightarrow+\infty.$ Moreover $p_n\in \{x=0\}$.
\end{lemma}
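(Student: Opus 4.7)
The plan is to dispose of the ``moreover'' clause using the bigraph structure of $N(u_n)$ recalled at the start of this section, and then to prove $|p_n|\to\infty$ by a contradiction argument driven by the Hamiltonian identity \equ{hamilt 2}. For the ``moreover'' part, I would use that for $n$ large $N(u_n)$ consists of two disjoint graphs, strictly contained in $\{y>0\}$ and $\{y<0\}$ respectively, so $N(u_n)\cap\{y=0\}=\emptyset$ and $p_n$ must lie on the positive $y$-axis. The evenness $u_n(0,y)=u_n(0,-y)$ combined with the monotonicity \equ{even} on $\{y\neq 0\}$ shows that $y\mapsto u_n(0,y)$ has its unique positive zero at $y=f_{n,2}(0)=:a_n>0$, so $p_n=(0,a_n)$.

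For the main assertion, suppose by contradiction that along a subsequence (not relabelled) $a_n\to a_\infty\in[0,\infty)$. Interior elliptic estimates together with $|u_n|\leq 1$ produce, up to a further subsequence, a limit $u_n\to u_\infty$ in $\mathcal C^2_{\text{loc}}(\mathbb R^2)$, where $u_\infty$ is a bounded entire solution of \equ{AC}, even in both variables, with $u_\infty(0,a_\infty)=0$. The heart of the argument is to evaluate the Hamiltonian constant $H_n$ of \equ{hamilt 2} in two different ways. At $s\to+\infty$, the line $y=s$ only meets the upper component of $N(u_n)$, at two symmetric points receding to infinity; the decay \equ{exp est 1} makes the integrand negligible away from these crossings, while near each crossing Proposition \ref{refined asymp} shows that $u_n$ is exponentially close to a planar heteroclinic oriented at angle $\theta_n$, and a direct computation using $\tfrac12 (H')^2 = F(H)$ yields a local contribution of $e_0 \sin\theta_n$ per crossing, with $e_0:=\int_{\mathbb R}(H')^2\, dt$. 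Constancy of $H_n$ in $s$ therefore forces $H_n = 2 e_0 \sin\theta_n \to 0$ as $n\to\infty$.

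At $s=0$, on the other hand, the evenness in $y$ gives $\partial_y u_n(x,0)\equiv 0$, hence
\[
H_n = \int_{\mathbb R} \Bigl[ \tfrac12 (\partial_x u_n)^2 + F(u_n) \Bigr](x,0)\, dx \geq 0.
\]
Both nonnegative integrands thus tend to $0$ in $L^1(\mathbb R)$; by Fatou's lemma in the $\mathcal C^2_{\text{loc}}$ limit, $\partial_x u_\infty(\cdot,0)\equiv 0$ and $F(u_\infty(\cdot,0))\equiv 0$, so continuity forces $u_\infty(\cdot,0)\equiv\sigma$ for some $\sigma\in\{-1,+1\}$. Combined with $\partial_y u_\infty(\cdot,0)\equiv 0$ (from the $y$-symmetry) and the real analyticity of $u_\infty$, the matching Cauchy data on $\{y=0\}$ with the constant solution $\sigma$ (admissible since $F'(\pm 1)=0$) force $u_\infty \equiv \sigma$ on $\mathbb R^2$ by analytic continuation, contradicting $u_\infty(0,a_\infty)=0$.

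The main technical point I anticipate is the first evaluation $H_n = 2 e_0 \sin\theta_n$: although morally a standard angle-weighted counting of the heteroclinic energy carried by the ends, making the localization rigorous requires combining Proposition \ref{refined asymp} with the planar-solution computation and uniformly controlling the error contributions from regions away from the two crossings as $s\to+\infty$. Once this is established, the remainder of the contradiction argument proceeds routinely.
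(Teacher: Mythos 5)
Your argument for the main assertion ($|p_n|\to\infty$) takes a genuinely different and cleaner route than the paper's, but your treatment of the ``moreover'' clause is circular, and that gap needs to be closed.

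On the main assertion: the paper passes to a $\mathcal C^2_{loc}$ limit and then combines a theorem of Gui on the asymptotics of nodal sets of monotone solutions with the balancing formula (\ref{eq:lsr}) applied to $X=(0,1)$ over domains bounded by slanted lines (Claims 1 and 2 in the proof). You instead evaluate the Hamiltonian constant along horizontal lines: $H_n = 2e_0\sin\theta_n\to 0$ from $s\to\infty$, while at $s=0$ the evenness kills $\partial_y u_n$ and $H_n$ becomes the manifestly nonnegative integral $\int\bigl[\tfrac12 u_{n,x}^2 + F(u_n)\bigr](\cdot,0)\,dx$; Fatou in the local limit forces $u_\infty(\cdot,0)\equiv\pm 1$ and then $u_\infty\equiv\pm 1$, contradicting $u_\infty(p_\infty)=0$. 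This is shorter and avoids the appeal to Gui's asymptotic theorem. Two caveats: (i) the paper only assumes $F$ smooth, so you cannot invoke real analyticity of $u_\infty$; you should use the unique continuation principle (Carleman estimates) for second-order elliptic operators, writing $\Delta(u_\infty-\sigma) = c(x)(u_\infty-\sigma)$ with $c$ bounded — this is a standard replacement; (ii) the evaluation $H_n = 2e_0\sin\theta_n$ does require combining (\ref{exp est 1}) and Proposition \ref{refined asymp} with care, as you acknowledge. A pleasant feature is that if $p_\infty$ lands on $\{y=0\}$ instead of $\{x=0\}$, the contradiction comes immediately from $F(u_\infty(p_\infty))=0$ with no need for unique continuation at all, so your argument proves $|p_n|\to\infty$ without knowing a priori which side of $\partial Q_1$ the points $p_n$ are on.

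On the ``moreover'' clause: your argument assumes the conclusion. You assert that for $n$ large $N(u_n)$ consists of two graphs strictly contained in $\{y>0\}$ and $\{y<0\}$, so $N(u_n)\cap\{y=0\}=\emptyset$; but this is exactly what the lemma is proving. What is actually recalled at the start of the section is only the bigraph property in each quadrant, and this does not preclude $N(u_n)\cap Q_1$ from terminating at a point $(b_n,0)$ on the positive $x$-axis (with, necessarily, vertical tangent there since $\partial_y u_n(\cdot,0)\equiv 0$). In that scenario the whole nodal set is confined to $\{|x|\ge b_n\}$, which is perfectly compatible with the bigraph structure. The paper rules this out, after $|p_n|\to\infty$ is in hand, with a second Hamiltonian identity along vertical lines: the constant equals $2\mathbf e_F\cos\theta_n$, bounded away from $0$, while if $b_n\to\infty$ the integrand on $\{x=0\}$ is uniformly exponentially small in $b_n$ (with a linearly growing but exponentially dominated integration region), so the integral tends to zero — a contradiction. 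You need this, or an equivalent argument; the bigraph observation alone does not give it.
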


\begin{proof}

To show that $p_n\to \infty$ we suppose to the contrary that $p_n\to p^*$, $|p^*|<\infty$ . We know that the sequence $\{u_n\}$ converges in ${\mathcal C}_{loc}^2(\R^2)$ to a solution $u^*$ of the Allen-Cahn equation. Since $|p^*|<\infty$, by Theorem 4.4 of  \cite{MR2381198} we know that if $u^*_x>0$, $x>0$, $u^*_y<0$, $y>0$, then $u^*$ must be a  solution to (\ref{AC})  whose nodal set in the first quadrant is asymptotically a straight line with positive slope equal to $\tan \theta^*\neq 0$. To show that $u^*_x>0$, $x>0$ and $u^*_y<0$, $y>0$, one may use  an maximum principle based  argument, similar to the one in the Claim 1 below, we leave the details to the reader.   Although a priori $u^*$  is not a four ended solution in the sense of Definition \ref{de:001}, it can also be proven, using the De Giorgi conjecture in dimension two and the Refined Asymptotic theorem (Proposition \ref{refined asymp}), that $u^*\in\mathcal{M}_4^{even}$ (we will not rely on this fact in the argument given below). 
For future purpose we recall  that the nodal set in the first quadrant is a graph i.e.  $N(u_n)\cap Q_1=\{(x,y)\mid y=f_n(x)\}$,  where $f_n$ is a smooth function.

To proceed we need the following:

\noindent
{\bf{Claim 1}.}
{\it Let $\{u_n\}$ be a sequence of solutions as described above and let $x_n\to \infty$, as $n\to \infty$. There exists a subsequence $\{x_{n_k}\}$ such that $f_{n_k}(x_{n_k})\to \infty$, as $k\to \infty$.  }
 
To prove this claim we argue by contradiction. We assume that for some constant $A_0>0$, we have  that 
\[
f_n(x_n)\leq A_0, \quad \forall n.
\]
Since $f_n'(x)=-\frac{u_{n,x}(x, f_n(x))}{u_{n,y}(x,f_n(x))}>0$ therefore, passing to the limit as $n\to \infty$, we have $u_n\to u^*$, at least along a subsequence in $\mathcal C^2_{loc}(\R^2)$,  where $u^*$ is  an even  solution  such that $u^*_x\geq 0$, in $x>0$, $u^*_y\leq 0$, in $y>0$. We will show that in fact $u^*_x>0$ when $x>0$ and  $u^*_y<0$, when $y>0$. If $u_x^*(x^*, y^*)=0$, for some $(x^*, y^*)$ then, using the maximum principle, we get $u^*_x\equiv 0$. The same is true for $u^*_y$. Consequently either $u^*$ is an even, monotone, one dimensional solution, which is impossible, or $u^*=u_0$, where $u_0=\pm 1$ or $0$. In the latter case we consider the quantity 
\[
E_R(u)= \frac{1}{R}\int_{B_R}\frac{1}{2}|\nabla u|^2+F(u), \quad R>0.
\]
This quantity is known to be monotone in $R$ and, since $|p^*|<\infty$ we   have that 
\[
c\leq E_R(u_n)\leq C,
\]
for certain constants $c, C>0$. But since $E_R(u_n)\to E_R(u^*)$ we see that $u^*$ can not be a constant. 

In summary $u^*$ is an even and monotone solution in $\R^2$, whose $0$ level set $N(u^*)$ is contained in the strip $|y|\leq A_0$. But by Theorem 4.4 of  \cite{MR2381198} this is impossible. This ends the proof of the claim.

Now we have the following:

\noindent
{\bf{Claim 2}.}  {\it For each sufficiently small $\delta$ there exists a $r>0$ such that for all $r<x_{1}<x_2$ we have $|f'_n(x_1)-f'_n(x_2)|<\delta$.} 

Accepting this  we get that $\lim_{n\to \infty}\tan \theta(u_n)=\tan\theta^*\neq 0$, which is a contradiction. It remains then to prove the claim. 

Arguing by contradiction we suppose that there exist sequence $r_{k}%
\rightarrow+\infty$ and $x_{1,n_{k}},x_{2,n_{k}}>r_{k},$ such that%
\begin{equation}
\label{contra 1}
\left\vert f_{n_{k}}^{\prime}\left(  x_{1,n_{k}}\right)
-f_{n_{k}}^{\prime}\left(  x_{2,n_{k}}\right)  \right\vert \geq {\delta}.
\end{equation}
Then, at least for one of the points $x_{i, n_k}$ we have (passing to a subsequence if necessary):
\begin{equation}
\label{lem one 1}
|f'(x_{i, n_k})|>\frac{\delta}{2}.
\end{equation}
We will call this point $x^*_{1, n_k}$. Next we chose $x^{*}_{2, n_k}$ such that $|f'_{n_k}(x^{*}_{2, n_k})|=\frac{\delta}{4}$ and
\[
\frac{\delta}{4}\leq |f'_{n_k}(x)|, \quad x\in [x^*_{1, n_k}, x^{*}_{2,n_k}].
\] 
Such point clearly exists when $k$ is sufficiently large since   $f'_{n_k}(x)\to \tan(\theta(u_{n_k}))$, as $x\to \infty$. 


Consider two lines $L_{1,n_{k}}$ and $L_{2,n_{k}}$ with slopes $-1$ passing
though the points $(x^*_{i,n_{k}},f_{n_{k}}(x^*_{i,n_{k}}))$, $i=1,2$,
respectively. Note that since the nodal lines $N(u_{n_{k}})$ are bigraphs,
which are eventually asymptotically straight lines with positive slopes, therefore the
lines $L_{i,n_{k}}$ must be transversal to $N(u_{n_{k}})$ at their points of
intersection. Next, consider the domain $\Omega_{n_{k}}\subset Q_{1}$ bounded
by the lines $L_{i,n_{k}}$, $i=1,2$,
and the vector field $X=\left(  0,1\right)  .$ The balancing formula
(\ref{eq:lsr}) tells us
\[
\int_{\partial\Omega_{n_{k}}}\left(  \left(  \frac{1}{2}\left\vert \nabla
u_{n_{k}}\right\vert ^{2}+F\left(  u_{n_{k}}\right)  \right)  X-X\left(
u_{n_{k}}\right)  \nabla u_{n_{k}}\right)  \cdot\nu dS=0.
\]
Now we estimate the various boundary integrals involved in the above integral. First, note that the integral over the segment $\partial\Omega_{n_k}\cap\{x=0\}$ is automatically $0$  by the choice of the vector field $X$ and the evenness of $u_{n_k}$. 

Next, we will show that as $r_{k}\rightarrow+\infty,$
\begin{equation}
\int_{\partial\Omega_{n_{k}}\cap\left\{  y=0\right\}  }\left(  \left(
\frac{1}{2}\left\vert \nabla u_{n_{k}}\right\vert ^{2}+F\left(  u_{n_{k}%
}\right)  \right)  X-X\left(  u_{n_{k}}\right)  \nabla u_{n_{k}}\right)
\cdot\nu dS\rightarrow0. \label{small}%
\end{equation}
To this end let $\bar x_{i, n_k}$, $i=1,2$ be two points such that $\partial \Omega_{n_k}\cap\{y=0\}=\{(x, 0)\mid x\in [\bar x_{1, n_k},  \bar x_{2, n_k}] \}$. Then, by elementary geometry we get,  with some constant $c>0$: 
\[
\mathrm{dist}\,(N(u_{n_k})\cap Q_1, {\tt x})\geq \min\left\{(x-x^*_{1,n_k}), c \left(\frac{\delta}{4}(x-x^*_{1,n_k})+f(x^*_{1,n_k})\right)\right\}, \quad {\tt x}=(x, 0), \quad x \in [\bar x_{1,n_k}, x^*_{2, n_k}].
\]
On the other hand, when $x\in [x^*_{2,n_k}, \bar x_{2, n_k}]$ then 
\begin{align*}
\mathrm{dist}\,(N(u_{n_k})\cap Q_1, {\tt x})&\geq c\left( \frac{\delta}{4}(x^*_{2,n_k}-x^*_{1,n_k})+f_{n_k}(x^*_{1,n_k})\right), \\
 ( \bar x_{2, n_k}-x^*_{2,n_k})&\leq C \left(\frac{\delta}{4}(x^*_{2,n_k}-x^*_{1,n_k})+f_{n_k}(x^*_{1,n_k})\right).
\end{align*}
Additionally we know that 
\begin{equation}
\left\vert u_{n_{k}}^{2}\left({\tt x}\right)  -1\right\vert +\left\vert \nabla
u_{n_{k}}\left( {\tt x}\right)  \right\vert \leq Ce^{-\beta\mathrm{dist}\,(N(u_{n_k})\cap Q_1, {\tt x})}, \quad{\tt x}=(x,0)  \in\partial\Omega_{n_{k}}.
\label{de}%
\end{equation}
Above, the constants $c>0$ and $C>0$ can be chosen independent on $n$. Using this, the Claim 1,  and the fact that $\bar x_{1, n_k}-x^*_{1,n_k}$  is proportional to $f_{n_k}(x^*_{1,n_k})$, we conclude (\ref{small}). 


Now we will estimate the integrals along the lines $\partial\Omega_{n_k}\cap L_{i, n_k}$. For this purpose it is convenient to denote 
\[
{\tt e}_{i,n_k}=(\cos\alpha_{i,n_k}, \sin\alpha_{i,n_k}), \quad {\tt e}^\perp_{i, n_k}=(\sin\alpha_{i,n_k}, -\cos\alpha_{i, n_k}), \quad \mbox{where}\ \alpha_{i,n_{k}}=\arctan f_{n_{k}}^{\prime}\left(  x^*_{i,n_{k}}\right),
i=1,2.
\]
Let us consider the sequence of functions $v_{i, k}(x,y)=u(x^*_{i, n_k}+x, f_{n_k}(x^*_{i, n_k})+y)$. By the De Giorgi conjecture in dimension two we have:
\[
v_k(x,y)-H({\tt e}^\perp_{i, n_k}\cdot (x,y))\to 0, \quad \mbox{in}\ {\mathcal C}^2_{loc}(\R^2).
\]
In other words, locally around $\left(  x^*_{i,n_{k}},f_{n_{k}%
}\left(  x^*_{i,n_{k}}\right)  \right)  $, the function  $u_{n_k}$ converges to 
\[
H\left({\tt e}^\perp_{i, n_k}\cdot(x-x^*_{i,n_{k}}, y-f_{n_{k}}(  x^*_{i,n_{k}})\right).
\]
On the other hand, again by (\ref{exp est 1}), we know that on the segment
$\partial\Omega_{n_{k}}\cap L_{i,n_{k}}$,
\[
|u_{n_{k}}^{2}\left({\tt x}\right)  -1|+|\nabla u_{n_{k}}\left({\tt x}\right)
|\leq Ce^{-\beta|x^*_{i,n_{k}}-x|}, \quad {\tt x}=(x,y).
\]
These facts, after some calculations,   yield
\[
\int_{\partial\Omega_{n_{k}}\cap L_{i,n_{k}}}\left(  \left(  \frac{1}%
{2}\left\vert \nabla u_{n_{k}}\right\vert ^{2}+F\left(  u_{n_{k}}\right)
\right)  X-X\left(  u_{n_{k}}\right)  \nabla u_{n_{k}}\right)  \cdot\nu
dS=\left(  -1\right)  ^{i+1}\sin\alpha_{i,n_{k}}\mathtt{e}_{F}+o\left(
1\right)  ,
\]
where $o\left(  1\right)  $ is a term goes to $0$ as $r_{k}\rightarrow
+\infty,$ while
\[
{\tt e}_{F}=\int_\R (H')^2.
\]
Combining all the above estimates, we infer
\[
\sin\alpha_{1,n_{k}}-\sin\alpha_{2,n_{k}}=o\left(  1\right)  ,
\]
which is a contradiction.

It remains to show \red{that $p_n\in \{x=0\}$. To this en we argue by contradiction and  assume that $p_n\in \{y=0\}$. Then we use a Hamiltonian identity:
\[
\int_{\{x=0\}} \big[\frac{1}{2} u_{n,y}^2-\frac{1}{2} u_{n,x}^2+F(u_n)\big]\, dy=\int_{\{x=s\}} \big[\frac{1}{2} u_{n,y}^2-\frac{1}{2} u_{n,x}^2+F(u)\big]\, dy.
\]
On the one hand, letting $s\to \infty$ we get the right hand side member converges to $2{\tt e}_{F}$, while the left hand side member converges to $0$ as $n\to \infty$. This leads to a contradiction. 

}

\end{proof}

Now, $N(u)\cap Q_1$ is a graph of a monotonically increasing, even function hence  the above proposition asserts that  as $\theta(u)\to 0$, the distance between the nodal set of $u$ and the $x$ axis will go the infinity as well.

\subsection{A refinement of the asymptotic behavior of the nodal set}
Let   $u$ be a four-end solution with small angle $\theta({u}).$
We denote $\varepsilon=\tan\theta(u)$ and use for simplicity $\ve$ as a small parameter. To obtain more precise
information about this solution, our first step is to define a good approximate
solution. As we will see later, the fact that there is a true solution around
the approximate one restricts the possible behavior of the nodal set and
enables us the analyze the equation satisfied by the nodal line.

The nodal set $N\left(  u\right)  $ in the lower half plane is the graph
$\Gamma$ of a function $y=f\left(  x\right)  $, $y\in \R$. Strictly speaking the function $f$ depends on $u$ but we will not indicate this dependence.  Using the De Giorgi conjecture in $2$ dimensions   it is not difficult
to show that $\left\Vert f^{\prime}\left(  x\right)  \right\Vert _{{\mathcal C}^{1}(\R)}\to 0$ as $\theta({u})\to 0$. Indeed, let us assume that there exist a sequence of solutions $u_n$, with $\theta(u_n)\to 0$, and a sequence of points ${\tt x}_n\in N(u_n)\cap Q_1$, such that 
\begin{equation}
\label{f prime 2}
{\tt x}_n=(x_n, f(x_n)),\quad  \mbox{and}\  |f'(x_n)|+|f''(x_n)|>\delta, \quad \mbox{with some} \ \delta>0. 
\end{equation}
Then we define:
\[
v_n({\tt x})=u_n({\tt x}_n+{\tt x}).
\]
Using the monotonicity of $u_n$ we get that $v_{n,y}>0$, for $y<-f(x)$ and then the De Giorgi conjecture implies that $v_n$ converges in ${\mathcal C}^2_{loc}(\R^2)$ to the heteroclinic solution which contradicts (\ref{f prime 2}). 
For future references we observe finally  that in general the graph of $N(u)\cap Q_1$ is at least a ${\mathcal C}^3(\R)$ function and, \red{bootstrapping the above argument it is not hard to show that $\|f'\|_{\mathcal{C}^2(\R)}=o(1)$, as $\theta(u)\to 0$ as well. In fact, we can use the fact that $u(x,f(x))=0$ and that $-u_y(x, f(x))>c>0$ in $Q_1$, to show this. In turn the uniform estimate on $-u_y$ along the nodal set  can be proven using the De Giorgi conjecture. In what follows we will often rely on the fact   $f\in \mathcal{C}^3(\R)$ and that $\|f'\|_{\mathcal{C}^2(\R)}\to 0$  as $\theta(u)\to 0$.  }

To fix attention we will always work with the solution whose nodal lines have a small slope $\ve=\tan\theta(u)$ at $\infty$. This means that the these lines are asymptotically parallel, as $\ve\to 0$,  to the $x$ axis and one of them is contained in the lower half plane and the other in the upper half plane. We know that they are symmetric with respect to the $x$ axis. 
In the sequel it will be convenient to denote the   component of the nodal set $N(u)$ in the lower half plane by $\Gamma_{\ve,1}$, and the  one in the upper  half plane by $\Gamma_{\ve,2}$. The nodal lines are bigraphs, and consequently we  have $\Gamma_{\ve,i}=\{y=f_{\ve,i}(x)\}$.

To introduce the functional analytic tools used in this paper we introduce a weight function%
\[
W_{a}({\tt x})  =\left(  \cosh x\right)  ^{a}, \quad {\tt x}=(x,y), \quad a>0.
\]
For $\ell =0,1,2,$ let $\mathcal{C}_{a}^{\ell,\mu}(  {\R}^{2})  =W_{a}%
^{-1}{\mathcal C}^{\ell,\mu}(  {\R}^{2}) $, endowed with the weighted norm%
\[
\| \phi\| _{{\mathcal C}_{a}^{\ell,\mu}\left(  {\R}^{2}\right)
}:=\sup_{{\tt x}\in \R^2}W_{a}({\tt x})\|\phi\|_{{\mathcal C}^{\ell,\mu}(B({\tt x}, 1))  }.
\]
Likewise, we let $W_a(x)=(\cosh x)^a$ and define the weighted space $\mathcal{C}^{\ell, \mu}_a(\R)$ by:
\[
\|f\|_{{\mathcal C}_{a}^{\ell,\mu}\left(  {\R}\right)
}:=\sup_{{x}\in \R}W_{a}({x})\|f\|_{{\mathcal C}^{\ell,\mu}(({x}-1, x+1))  }.
\]
In what follows we will measure the size of various  functions involved  in the $\mathcal{C}_{a}^{2,\mu}(  {\R}^{2})$, and  in the  ${{\mathcal C}_{a}^{2,\mu}\left(  {\R}\right)
}$ norm, where $a, \mu>0$. Mostly  we will have  $a\sim \ve$.

Let us recall that a four end solution $u$ is asymptotic to a model solution $u_{\lambda}$ defined in the introduction.   Using the  Proposition \ref{refined asymp} we know that $u-u_\lambda\in W^{2,2}_{\ve\bar\tau, \delta}(\R^2)$ with some $\bar \tau >0$ and $\delta>0$, which can be chosen independent on $\ve$. We claim that from this it follows  that in fact $u-u_\lambda\in \mathcal C_{\ve\tau}^{2,\mu}(\R^2)$, with some $\tau>0$. 
To see this we let  $\ve$  be fixed and ${\tt e}$ be the asymptotic direction of the end of $u$ with $\ve=\tan\theta(u)$ in $Q_1$. By  definition,   taking  $R$ large,    we have that 
\[
\Gamma_{\gamma, \delta}({\tt x})\sim
\left(\cosh (({\tt x}-{\tt x}_{\ve, 1})\cdot {\tt e})\right)^\gamma\left(\cosh(({\tt x}-{\tt x}_{\ve, 1})\cdot {\tt e}^\perp)\right)^\delta\sim (\cosh x)^{\gamma +\delta{\mathcal O}(\ve)}(\cosh y)^{\delta+\gamma\mathcal O(\ve)}, \quad {\tt x}\in Q_1\setminus B_R.
\]
Taking $\gamma=\bar\tau\ve$ and $\delta$ small we get the claim. In addition, we know 
 that outside of a large compact set in the first quadrant we have:
\[
u({\tt x})=H(({\tt x}-{\tt x}_{\ve, 1})\cdot {\tt e}^\perp)+ {\mathcal O}(e^{\,-\ve\bar\tau |x|-\delta|y|}).
\]
We can use this and the fact that  along the nodal set we have $u({\tt x})=0$ to show that $f''_{\ve, 2}\in \mathcal{C}^{0,\mu}_{\ve\tau}(\R)$. Indeed, we get from the above that, with some constant ${\mathcal A}_\ve$ we have:
\begin{equation}
\label{veeee}
0=H(f_{\ve, 2}(x)-\ve x-{\mathcal A}_\ve)+ {\mathcal O}(e^{\,-\tau\ve x}), \quad x\to \infty,
\end{equation}
from which it follows that 
\begin{equation}
\label{first f}
\|f_{\ve, 2}-\ve |x|-{\mathcal A}_\ve\|_{\mathcal C^{0,\mu}_{\ve\tau}(\R)}+\|f'_{\ve, 2}-\ve\mathrm{sign}\,(x)\|_{\mathcal C^{0,\mu}_{\ve\tau}(\R)}+\|f''_{\ve, 2}\|_{\mathcal C^{0,\mu}_{\ve\tau}(\R)}<\infty.
\end{equation}
%

\subsection{Fermi coordinates near  the nodal lines}

We will now describe neighborhoods of the nodal lines $\Gamma_{\ve, i}$, $i=1,2$, where one can define the Fermi coordinates  of ${\tt x}\in \R^2$ as the unique $(x_i, y_i)$ such that:
\[
{\tt x}=(x_i, f_{\ve, i}(x_i))+y_i  n_{\ve, i}(x_i), \quad n_{\ve, i}(x)=\frac{(-1)^i(f_{\ve, i}'(x), -1)}{\sqrt{1+\left(f_{\ve, i}'(x)\right)^2}}.
\]
We will first find a large, expanding  neighborhood of $\Gamma_{\ve, i}$ in  which that map ${\tt x}\mapsto (x_i, y_i)$ is a diffeomorphism. Because of symmetry it suffices to consider a neighborhood of $\Gamma_{\ve, 1}$. 
We define the projection of a point ${\tt x}\in \R^2$ onto $\Gamma_{\ve, 1}$ by:
\[
\pi_{\ve,1}({\tt x})=(\pi_{\ve, 1}^x({\tt x}), \pi_{\ve, 1}^y({\tt x)}):=(x_1, f_{\ve, 1}(x_1)), \quad \mbox{whenever}\  {\tt x}=(x_1, f_{\ve, 1}(x_1))+y_1n_{\ve, 1}(x_1), \quad y_1=\mathrm{dist}\,({\tt x}, \Gamma_{\ve, 1}).
\]  
Note that in general the projection $\pi_{\ve,1}({\tt x})$ is a multivalued function.

We fix a small number $\theta\in (0,1)$ and   let  ${m}_\ve\colon \R\to \R_+$ to be a  function such that for each $x\in \R$, $m_\ve(x)$ represents the largest number for   which the  following properties are satisfied simultaneously:
\begin{itemize}
\item[(i)]
\[
m_\ve (x) \leq\frac{\theta}{\left\vert f_{\ve, 1}^{\prime\prime}\left(
x\right)  \right\vert },
\]
\item[(ii)]
The projection function $\pi_{\ve, 1}$ is  a well defined single-valued function for any ${\tt x}\in \R^2$ such that
\[
\|{\tt x }-\pi_{\ve, 1}({\tt x})\| <(m_\ve\circ\pi_{\ve, 1}^x)({\tt x}).
\]
\end{itemize}
We can regard the function ${m}_\ve$ as the measure of the  size
of the maximal  neighborhood of $\Gamma_{\ve, 1}$, where the Fermi coordinate could be defined. In fact, conditions (i)--(ii) guarantee that the change of variables given by the Fermi coordinates is a diffeomorphism in a neighborhood of $\Gamma_{\ve, 1}$ determined by (ii). 

To state the next result we let $\tau<\alpha_0=\sqrt{F''(1)}$ be a positive constant.

\begin{lemma}\label{lemma 3.3}
For each $A>0$, and for each sufficiently small $\ve$ we have the following estimate:
\red{\begin{equation}
\label{est dve 1}
e^{\,-m_\ve\left(  x\right)  }(\cosh x)^{A \varepsilon}\leq C_{A, \tau, \theta}\|f_{\ve, 1}''\|^{\frac{A}{\tau}}
_{\mathcal {C}_{\varepsilon\tau}^{0}\left(  \mathbb{R}\right)  }\|f_{\ve, 1}''\|^{\frac{A}{\tau}}
_{\mathcal {C}^{0}\left(  \mathbb{R}\right)  }.
\end{equation}}
\end{lemma}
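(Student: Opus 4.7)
The lemma is equivalent, after taking logarithms, to the lower bound
\[
m_\ve(x) \,\geq\, \frac{A}{\tau}\log \frac{(\cosh x)^{\ve\tau}}{\|f_{\ve,1}''\|_{\mathcal{C}^0_{\ve\tau}(\R)}\,\|f_{\ve,1}''\|_{\mathcal{C}^0(\R)}} \,-\, \log C,
\]
for a suitable constant $C = C(A,\tau,\theta)$. My plan is to take the right hand side above as a candidate value $L_\ve(x)$ and to verify that, for $\ve$ small enough, $L_\ve(x)$ satisfies the two defining conditions (i) and (ii) of $m_\ve$ simultaneously. Since $m_\ve(x)$ is by construction the largest such value, this yields $m_\ve(x) \geq L_\ve(x)$, equivalent to \eqref{est dve 1}. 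Throughout I write $M_1 = \|f_{\ve,1}''\|_{\mathcal{C}^0_{\ve\tau}(\R)}$ and $M_2 = \|f_{\ve,1}''\|_{\mathcal{C}^0(\R)}$; note $M_2\leq M_1$, that both tend to $0$ as $\ve\to 0$ by the asymptotic analysis of the preceding subsection, and that $u:=(\cosh x)^{\ve\tau}/M_1\geq 1/M_1$ and $w:=1/M_2$ become arbitrarily large uniformly in $x$.

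Verification of (i) is a direct calculation. The weighted bound $|f_{\ve,1}''(x)|(\cosh x)^{\ve\tau}\leq M_1$ gives $\theta/|f_{\ve,1}''(x)|\geq \theta u$. The condition $L_\ve(x)\leq \theta/|f_{\ve,1}''(x)|$ reduces to $(A/\tau)\log(uw)\leq \theta u+\log C$. Writing $\log(uw)=\log u+\log w$ and using $\log t\leq \eta t + C(\eta)$ for any $\eta>0$, the choice $\eta\ll \tau\theta/A$ (together with $u\to\infty$) delivers the inequality for all $x$, once $\ve$ is small enough.

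Verification of (ii) is the geometric heart of the matter and boils down to a lower bound on the reach $R_\ve(x_0)$ of $\Gamma_{\ve,1}$ at $p_0 = (x_0, f_{\ve,1}(x_0))$. For any second point $p_1 = (x_1, f_{\ve,1}(x_1))$ whose normal meets that of $p_0$ at some ${\tt x}\in \R^2$, the angle between these normals equals $\arctan f_{\ve,1}'(x_1) - \arctan f_{\ve,1}'(x_0) = \int_{x_0}^{x_1} f_{\ve,1}''(s)/(1+f_{\ve,1}'(s)^2)\,ds$, and elementary trigonometry combined with the smallness of $\|f_{\ve,1}'\|_\infty$ yields
\[
|{\tt x}-p_0| \,\geq\, \frac{c\,|x_1-x_0|}{\left|\int_{x_0}^{x_1}f_{\ve,1}''(s)\,ds\right|} \,\geq\, \frac{c}{\sup_{\xi\in[x_0,x_1]}|f_{\ve,1}''(\xi)|}.
\]
Moreover, because $\Gamma_{\ve,1}$ is a graph of small slope, only those $x_1$ with $|x_1-x_0|\lesssim R_\ve(x_0)$ are relevant (points with large $|x_1-x_0|$ are far from $p_0$ in Euclidean distance and so do not affect the local reach). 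The global bound $|f_{\ve,1}''|\leq M_2$ then yields $R_\ve(x_0)\geq c/M_2 = c\,w$, while for $|x_0|$ large the self-consistent use of the weighted bound on the subinterval of $[x_0-R_\ve(x_0),\,x_0+R_\ve(x_0)]$ lying away from the origin yields $R_\ve(x_0)\geq c(\cosh x_0)^{\ve\tau}/M_1 = c\,u$. Combining,
\[
R_\ve(x_0) \,\gtrsim\, \max(u,w),
\]
and the inequality $L_\ve(x_0)\leq R_\ve(x_0)$ reduces to $(A/\tau)\log(uw)\leq c\max(u,w)+\log C$, which holds by the same logarithmic-versus-linear comparison used in (i), since $\log(uw)\leq 2\log\max(u,w) = o(\max(u,w))$.

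The main obstacle is the reach lower bound in step (ii). While the infinitesimal picture (reach at $p_0$ bounded by the osculating-circle radius $1/|\kappa(x_0)|$) is classical, carefully combining the weighted curvature estimate near $x_0$ with the global $\mathcal{C}^0$ bound in intermediate regions, and simultaneously ruling out long-range self-approach of $\Gamma_{\ve,1}$, requires care. This last point uses crucially that $\|f_{\ve,1}'\|_{\mathcal{C}^2}=o(1)$ as $\ve\to 0$, established earlier in the section, so that $\Gamma_{\ve,1}$ is nearly horizontal and distant points cannot realize the reach at $p_0$. Condition (i) is, by comparison, routine.
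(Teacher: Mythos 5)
Your overall plan — propose a candidate lower bound $L_\ve(x)$ for $m_\ve(x)$ and verify conditions (i) and (ii) — has the right geometric core, and the ingredient you identify (reach controlled by the curvature at a point within $\sim 2R$ of $x_0$) is precisely what the paper uses. But the quantitative claim at the heart of step (ii), namely $R_\ve(x_0)\geq c\,u$ with $u=(\cosh x_0)^{\ve\tau}/M_1$, is false in general, and the ``self-consistent'' argument you sketch does not repair it. Once one produces (as you do, and as the paper's circular-arc argument does) a point $\xi=x_3$ with $|f_{\ve,1}''(x_3)|\geq 1/R_\ve(x_0)$ and $|x_3-x_0|<2R_\ve(x_0)$, the weighted bound gives $R_\ve(x_0)\geq(\cosh x_3)^{\ve\tau}/M_1$, not $(\cosh x_0)^{\ve\tau}/M_1$, and since $|x_3|$ can be as small as $|x_0|-2R_\ve(x_0)$ the two weights differ by a factor of order $e^{-2\ve\tau R_\ve(x_0)}$. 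Your self-consistent inequality therefore only yields $R\geq u\,e^{-2\ve\tau R}$, whose solution is roughly $R\gtrsim \log(\ve\tau u)/(\ve\tau)$ when $\ve\tau u\gg 1$, far weaker than $R\gtrsim u$. A similar (lesser) slip appears in step (i): you compare $(A/\tau)\log(uw)$ to $\theta u$ only, but since $M_2\leq M_1$, at $x=0$ one has $w=1/M_2\geq u=1/M_1$, so the term $(A/\tau)\log w$ need not be controlled by $\theta u$; one must use \emph{both} lower bounds $\theta/|f_{\ve,1}''(x)|\geq\theta u$ and $\geq\theta w$ simultaneously.

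The paper sidesteps the obstruction in (ii) entirely by never trying to lower-bound $m_\ve(x_0)$ as such. After locating $x_3$, it splits $e^{-m_\ve(x_0)}=e^{-m_\ve(x_0)/2}\cdot e^{-m_\ve(x_0)/2}$ and spends one half of the exponent, via the triangle inequality $|x_0|\leq|x_3|+2m_\ve(x_0)$ together with the smallness condition $A\ve\leq 1/8$, to \emph{transfer the weight} from $x_0$ to $x_3$: $e^{-m_\ve(x_0)/2}\,e^{2A\ve|x_0|}\leq e^{2A\ve|x_3|}$. At $x_3$ the weighted curvature estimate applies directly and yields the $M_1^{A/\tau}$ factor, while the remaining half of the exponent, through $m_\ve(x_0)\geq 1/M_2$, supplies $M_2^{A/\tau}$. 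This weight-transfer device is exactly what is missing from your sketch; without it, the candidate $L_\ve(x)$ cannot be verified to satisfy (ii), and the argument does not close.
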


\begin{proof}
If $m_\ve\left(  x\right)  =\frac{\theta}{{\left\vert f_{\ve, 1}^{\prime\prime}\left(
x\right)  \right\vert }},$ then 
\[
e^{\,-m_\ve\left(  x\right)  }\leq \exp\{-\frac{\theta}{|f_{\ve, 1}''(x)|}\}
\leq 
\exp\{-\frac{\theta(\cosh x)^{\tau\ve}}{2\|f''_{\ve, 1}\|_{{\mathcal C}^0_{\ve\tau}(\R)}}\}\exp\{-\frac{\theta}{2\|f''_{\ve, 1}\|_{{\mathcal C}^0(\R)}}\}
%
\]
The desired estimate follows from this.

If for some $x_0\in \R$ we have $m_\ve\left(  x_0\right)  <\frac{\theta}{{\left\vert f_{\ve, 1}^{\prime\prime}\left(
x_0\right)  \right\vert }},$ then there is an ${\tt x}_0$, with $\pi_{\ve,1}({\tt x}_0)=(x_0, y_0)$, and ${\tt x}_{1}=\left(  x_{1},f_{\ve, 1}(x_1)\right)  $ and
${\tt x}_{2}=\left(  x_{2},f_{\ve, 1}(x_2)\right),$ on $\Gamma_{\ve, 1}$, with $x_{1}<x_{2}$, and such that, denoting $r:=m_\ve(x_0)$ we would have
\[
\|{\tt x}_0-{\tt  x}_1\|=\|{\tt x}_0-{\tt x}_2\|=r\leq \frac{\theta}{{|f''_{\ve, 1}(x_0)|}}, 
\]
i.e. ${\tt x}_{j}$, $j=1,2$ lie on the circle $S_r$ with center at ${\tt x}_0$. For convenience we fix an orientation of $S_r$  which agrees with the orientation of the segment of $\Gamma_{\ve,1}$ between the points ${\tt x}_1$ and ${\tt x}_2$. 

We claim, that the arc of $S_r$, between ${\tt x}_{1}$ and ${\tt x}_{2}$  is the graph of a function $y=g\left(  x\right)
,x\in\left[  x_{1},x_{2}\right]$.  Indeed, we recall  that $\Gamma_{\ve, 1}$ is a bigraph, and so a graph,  and the points ${\tt x}_i$ lie on $\Gamma_{\ve, 1}\cap S_r$.  If there were a vertical line $L$ intersecting the arc of $S_r$ between ${\tt x}_1$ and ${\tt x}_2$ more then once then, since the segment of $\Gamma_{\ve, 1}$ between these points lies outside of $S_r$, except at ${\tt x}_1$ and ${\tt x}_2$,  we would conclude that necessarily $L$ intersects $\Gamma_{\ve, 1}$ more then once. This is a contradiction.  

An elementary
calculation yields%
\[
\min_{x\in\left[  x_{1},x_{2}\right]  }\left\vert g^{\prime\prime}\left(
x\right)  \right\vert \geq  \frac{1}{r}
\]
On the other hand, 
\[
\left\vert g^{\prime}\left(  x_{2}\right)  -g^{\prime}\left(  x_{1}\right)
\right\vert =\left\vert f_{\ve,1}^{\prime}\left(  x_{2}\right)  -f^{\prime}_{\ve,1}\left(
x_{1}\right)  \right\vert .
\]
Therefore, one can find a point ${\tt x}_{3}=\left( x _{3},y_{3}\right)  \in\Gamma_{\ve, 1}$
which satisfies
\[
\left\vert f^{\prime\prime}_{\ve,1}\left(  x_{3}\right)  \right\vert \geq\min
_{x\in\left[  x_{1},x_{2}\right]  }\left\vert g^{\prime\prime}\left(
x\right)  \right\vert \geq \frac{1}{r}
%
\]
%
%
%
%
%
Summarizing,  we see that there exists $x_{3}%
\in\left(  x_0-2m_\ve\left(  x_0\right)  ,x_0+2m_\ve\left(  x_0\right)  \right)  \,\ $such that%
\begin{equation}
\label{meps xxx}
m_\ve\left(  x_0\right)=r  \geq\frac{1}{\left\vert f^{\prime\prime}\left(
x_{3}\right)  \right\vert }.
\end{equation}
This implies, 
\[
e^{\,-\frac{1}{2}m_\ve(x_0)}\leq e^{\,-\frac{1}{4}|x_0-x_3|}\leq e^{\,-2A\ve|x_0|} e^{\,2A\ve|x_3|}, 
\]
as long as $A\ve\leq \frac{1}{8}$.
%
But then it follows:
\[
e^{\,-\frac{1}{2}m_\ve(x_0)}e^{\,2A\ve |x_0|}\leq \exp(\ve \tau |x_3|)^{\frac{2A}{\tau}}\leq \Big(\frac{m_\ve(x_0)}{\theta}\|f''_{\ve, 1}\|_{\mathcal C^0_{\ve \tau}(\R)}\big)^\frac{2A}{\tau}.
\]
We claim that from this it follows:
\[
e^{\,-\frac{1}{2}m_\ve\left(  x_0\right)  }(\cosh x_0)^{A \varepsilon}\leq \wt C_{A, \tau, \theta} \exp\{\frac{A}{\tau}\log\|f_{\ve, 1}''\|
_{\mathcal {C}_{\varepsilon\tau}^{0}\left(  \mathbb{R}\right)  }\}.
\]
This estimate is  easy to obtain  when $m_\ve(x_0)\leq 1$. When $m_\ve(x_0)>1$ then the estimate follows from the following simple observation: 
\[
 \forall t>1 \quad -t-\frac{2A}{\tau}\log t\leq y\Longrightarrow -t\leq -\frac{1}{2}y-\frac{4A}{\tau}\log\frac{2A}{\tau}.
 \]
 Next we observe that from (\ref{meps xxx}) we have as well:
 \[
 e^{\,-m_\ve(x_0)}\leq \exp\{-\frac{1}{\|f_{\ve, 1}''\|_{\mathcal{C}^0(\R)}}\}.
 \]
 From this estimate (\ref{est dve 1}) follows if we write:
 \[
 e^{\,-m_\ve(x)}(\cosh x)^{A\ve}=e^{\,-\frac{1}{2}m_\ve(x)}e^{\,-\frac{1}{2}m_\ve(x)}(\cosh x)^{A\ve}.
 \]
The proof of the lemma is thus completed.
\end{proof}
Based on the result of the lemma we will define   a smooth function  function $\mathrm{d}_\ve$ satisfying property 
(\ref{est dve 1}) and such that ${\mathrm d}_\ve(x)\leq m_\ve(x)$. To this end we fix $A> 4\tau$ and take $\ve\ll 1$ small such that 
\[
C_{A, \tau, \theta}^{\frac{\tau}{A}}\leq \frac{1}{\sqrt{\|f_{\ve, 1}''\|_{{\mathcal C}^0(\R)}}}.
\]
Then, from (\ref{est dve 1}) it follows:
\[
m_\ve(x)\geq A\ve\log(\cosh x)+{\frac{A}{2\tau}}\log\frac{1}{\|f_{\ve, 1}''\|_{{\mathcal C}^0_{\ve \tau}(\R)}}
\]
\red{
This lower bound accounts well for the size of the function $m_\ve(x)$ whenever $\ve|x|>\log\frac{1}{\ve}$. However we need somewhat more precise estimate for the function $m_\ve(x)$ when $\ve|x|\leq \log\frac{1}{\ve}$. To obtain such an estimate we observe that arguing in a similar way as in the proof of the Lemma  \ref{lemma 3.3} we can get that:
\[
m_\ve(x)\geq \frac{\theta}{2\|f''_{\ve,1}\|_{\mathcal{C}^0(\R)}}.
\]
Then we set:
\begin{equation}
\label{def dveee}
{\mathrm d}_\ve(x)=\frac{1}{2\sqrt{2}}\Big(\frac{1}{\sqrt{\|f''_{\ve,1}\|_{\mathcal{C}^0(\R)}}}\Big)+\frac{1}{2\sqrt{2}}\Big(A\ve\log(\cosh x)+{\frac{A}{2\tau}}\log\frac{1}{\|f_{\ve, 1}''\|_{{\mathcal C}^0_{\ve \tau}(\R)}}\Big).
\end{equation}
With this definition we have $m_\ve(x)\geq \sqrt{2}\mathrm{d}_\ve(x)$.
}
From this we see that  the Fermi coordinates of $\Gamma_{\ve, 1}$ are well defined in  a  tubular neighborhood $\mathcal{{O}}_1$ of
$\Gamma_{\ve, 1}$ defined by 
\[
{\mathcal {O}}_1=\{{\tt x}\in \R^2\mid \|{\tt x}-\pi_{\ve, 1}({\tt x})\|\leq ({\mathrm d}_\ve\circ \pi^x_{\ve, 1})({\tt x})\},
\]
Moreover, $\partial\mathcal{{O}}_1$ is smooth and has bounded curvature. 
%

Letting $\left(  x_{1},y_{1}\right)  $ be the Fermi coordinate of $\Gamma_{\ve,1}$ in
$\mathcal{{O}}_1$, $y_{1}$ being the signed distance to $\Gamma_{\ve,1},$ positive in the
upper part of $\mathbb{R}^{2}\setminus\Gamma_{\ve,1}$. The coordinate transformation
is a diffeomorphism $\left(  x_{1},y_{1}\right)  \mapsto
\left(  x,y\right)  $ between the Fermi coordinates and the Euclidean
coordinate and  it is given explicitly by
\begin{align}
\label{fermi 50}
\begin{aligned}
x  &  =x_{1}-\frac{f_{\ve,1}^{\prime}\left(  x_{1}\right)  }{\sqrt{1+\left(
f_{\ve,1}^{\prime}\left(  x_{1}\right)  \right)  ^{2}}}y_{1},\\
y  &  =f_{\ve,1}\left(  x_{1}\right)  +\frac{y_{1}}{\sqrt{1+\left(  f_{\ve,1}^{\prime}\left(
x_{1}\right)  \right)  ^{2}}}.
\end{aligned}
\end{align}
Similarly, for the graph of $y=f_{\ve, 2}(x)=-f_{\ve,1}\left(  x\right)$, which is the symmetric image of $\Gamma_{\ve,1}$ with respect to the $x$ axis in the  upper half plane one can associate a
Fermi coordinate $\left(  x_{2},y_{2}\right)\in  \R\times (-{\mathrm d}_\ve,{\mathrm d}_\ve),$ with
in  ${\mathcal {O}}_2$, which is the symmetric image of ${\mathcal {O}}_1$ defined above.

Now, the change of coordinates $(x_1,y_1)\mapsto (x, y)$ is a diffeomorphism in $\mathcal{O}_1$ (respectively $(x_2,y_2)\mapsto (x,y)$ is a diffeomporhpism in the corresponding neighborhood $\mathcal {O}_2$).  We will use $\mathbf{{x}%
}_{\ve, i}:\left(  x_{i},y_{i}\right)  \rightarrow\left(  x,y\right)  $ to denote this diffeomorphism. For any function $w\colon {\mathcal{O}}_i\to \R$ we will also define its pullback by $\mathbf {x}_{\ve, i}$ by setting    $(\mathbf{{x}}_{\ve, i}^*w)\left(
x_{i},y_{i}\right)  =w\circ\mathbf{{x}}_{\ve, i}\left(  x_{i},y_{i}\right)  $.

Let ${\tt x}=(x,y)\in \mathcal {O}_i$ and let $(x_i, y_i)$ be the Fermi coordinates of this point. In what follows we will need to compare the values of $\mathrm{d}_\ve(x_i)$ with $\mathrm{d}_\ve(x)$. Note that we can write:
\[
\log\cosh x_i=\log\cosh\big(x+\mathcal{O}(\|f'_{\ve, i}\|_{\mathcal {C}^0(\R)})y_i\big), 
\]
which, since $\|f'_{\ve, i}\|_{\mathcal {C}^0(\R)}\to 0$, as $\ve\to 0$, implies:
\[
|\log\cosh x_i-\log\cosh x|\leq o(1) |y_i|\leq o(1)\mathrm {d}_\ve(x_i).
\]
Then, by definition of the function $\mathrm{d}_\ve$ it follows:
\begin{equation}
\label{ddd 1}
\mathrm{d}_\ve(x_i)=(1+o(\ve))\mathrm{d}_{\ve}(x).
\end{equation}


Another relation involving the Fermi coordinates that we will need is the following:
\begin{equation}
\label{ddd 2}
|y_1|+|y_2|\geq 2|f_{\ve,1}(x)|\big(1+\mathcal{O}(\|f'_{\ve, 1}\|^2_{\mathcal{C}^0(\R)})\big).
\end{equation}
This estimate follows from the explicit formulas for the Fermi coordinates and elementary geometry. 

\section{Asymptotic profile of a solution near its nodal line}\label{sec 33}

\subsection{An approximate solution of (\ref{AC})}

We will define now an approximate solution to (\ref{AC}) which accounts accurately for the asymptotic  the behavior of the true solution  as $\ve\to 0$.  We will use the nodal lines $\Gamma_{\ve, i}$ as the point of departure and 
will base our construction on the neighborhoods $\mathcal{O}_i$, which are expanding as $x\to  \infty$.  
 
To be  precise, we  let
${\eta}_i$ be a cutoff function satisfying ${\eta}_i\left(  {\tt x}\right)
=0,{\tt x}\not \in \mathcal{{O}}_i$ and ${\eta}_i\left(  {\tt x}\right)  =1$ for the
point ${\tt x}\in\mathcal{{O}}_i$ such that $\mathrm{dist}\,( {\tt x},\partial\mathcal{
{O}}_i)  >1.$ Moreover, ${\eta}_i$ could be chosen in such a way that
$\left\Vert {\eta}_i\right\Vert _{{\mathcal C}^{3}(\R^2)}\leq C.$ We will use $\left(
x_{i},y_{i}\right)$ to denote the Fermi coordinates associated to $\Gamma_{\ve, i}$, $i=1,2$. 
Finally, we will introduce an unknown function $h_\ve\colon \R\to \R$, which a priori is of class $\mathcal{C}^3(\R)$ , and we let
\begin{equation}
\begin{aligned}
({\mathbf x}_{\ve, 1}^*{H}_{\ve, 1})\left(x_1,y_1\right)  &=({\mathbf {x}}_{\ve, 1}^*{\eta}_1)H\left(y_{1}-{h}_\ve(x_1)\right)
+\left(  1-{\mathbf x}_{\ve, 1}^* {\eta}_1\right)  \frac{H\left(  y_{1}-{h}_\ve(x_1)\right)
}{\left\vert H\left(  y_{1}-{h}_{\ve}(x_1)\right)  \right\vert },
\\
\quad {H}_{\ve, 2}\left(  x,y\right)  & ={H}_{\ve, 1}\left(  x,-y\right)  ,\quad \bar
{u}_\ve={H}_{\ve, 1}-{H}_{\ve, 2}-1.
\end{aligned}
\label{def hve}
\end{equation}
The  function ${h}_\ve$  is called the modulation function and we will show (Proposition \ref{prop gamma})  that it can be defined  through the orthogonality condition:
\[
\int_{\mathbb{R}}\big(\mathbf{{x}}_{\ve, i}^*\left(u-\bar{u}_\ve\right)
{\rho}_{\ve, i}{H}_{\ve, i}^{\prime}\big)dy_{i}=0,\quad \forall x_{i}\in\mathbb{R},
\]
where
\[
({\mathbf x}_{\ve, i}^*  H'_{\ve, i})(x_i, y_i)= ({\mathbf {x}}_{\ve, i}^*{\eta}_i)H'\left(y_{i}-(-1)^{i+1}{h}_\ve(x_i)\right), \quad i=1,2,
\]
and smooth cutoff functions  $\rho_{\ve, i}$ are defined through a smooth cutoff function $\rho$ by:
\[
({\tt x}_{\ve, i}^* \rho_{\ve, i})(x_i,  y_i)=\rho(x_i, y_i-(-1)^{i+1}{h}_\ve(x_i)),
\]
where 
\[
\rho(s,t)=\begin{cases}
1, \quad |t|\leq \frac{1}{2}\mathrm{d}_\ve (s),\\
0<\rho<1, \quad \frac{1}{2} \mathrm{d}_\ve(s)<t<\frac{3}{4}\mathrm{d}_\ve(s),\\
0 \quad\mbox{othewise}.
\end{cases}
\]
%
Note that because of the definition of the function ${\mathrm d}_\ve$ we can assume that $|\nabla \rho_{\ve, i}({\tt x})|=\mathcal{O}(\frac{1}{\ve|{\tt x}|})$, $\ve|\tt x|\gg 1$ with similar estimates for higher order derivatives. 

The proof of existence of the modulation function  ${h}_\ve$  will be  given later on but anticipating it we  observe that due to the exponential decay in $x$ of the functions involved, we have  ${h}_\ve\in \mathcal {C}_{\varepsilon\tau
}^{2,\mu}\left(  \mathbb{R}\right)$ and in fact we will show 
\begin{equation}
\label{estim modulation}
\|h_\ve\|_{{\mathcal C}^{2,\mu}_{\ve\tau}(\R)}\leq C\ve^2.
\end{equation}

If we let $\hat{\phi} = u-\bar{u}_\ve$ then we have:
\[
L_{\bar u_\ve} \phi=E(\bar u_\ve)-P(\phi), \quad E(\bar u_\ve)=\Delta \bar u_\ve-F'(\bar u_\ve).
\]
Our first result is the following:
\begin{proposition}\label{estim hat phi}
There exist constants $\tau\in (0, \alpha_0)$, $\mu>0$ such that the following estimate holds: 
\[\|{\phi}\| _{C_{\varepsilon\tau}^{2,\mu}\left(
\mathbb{R}^{2}\right)  }\leq C\varepsilon^{2}.
\]
\end{proposition}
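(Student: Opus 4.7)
The plan is to exploit the equation $L_{\bar u_\ve}\phi = E(\bar u_\ve) - P(\phi)$, where $P(\phi) := F'(\bar u_\ve + \phi) - F'(\bar u_\ve) - F''(\bar u_\ve)\phi = \mathcal{O}(\phi^2)$, combined with the orthogonality conditions defining $h_\ve$. Near each nodal line $\Gamma_{\ve, i}$, the approximate solution $\bar u_\ve$ is essentially $(-1)^{i+1}H(y_i - (-1)^{i+1} h_\ve(x_i))$, so in the Fermi coordinates of Section 3 the operator $L_{\bar u_\ve} = -\Delta + F''(\bar u_\ve)$ is a small perturbation of the ODE operator $L_0 = -\partial_{y_i}^2 + F''(H)$ by a tangential Laplacian and an interaction term, and the weight $\rho_{\ve, i}H'_{\ve, i}$ is precisely designed to project out the one-dimensional kernel of $L_0$, which is spanned by $H'$.

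The first step is to estimate $E(\bar u_\ve)$ pointwise in Fermi coordinates. Inside $\mathcal{O}_i$ three contributions arise: a geometric curvature term proportional to $f''_{\ve, i}\,H'(y_i - h_\ve)$; modulation terms of the form $h_\ve'' H'$ and $(h_\ve')^2 H''$; and an interaction cross term between the two heteroclinic profiles, of size $e^{-\alpha_0(|y_1|+|y_2|)}$ which by (\ref{ddd 2}) is dominated by $e^{-2\alpha_0|f_{\ve, 1}(x)|}$. The Toda asymptotic $|f_{\ve, 1}(x)| \approx \ve|x| + \alpha_0^{-1}\log(1/\ve)$ makes the last term $\mathcal{O}(\ve^2 e^{-2\alpha_0\ve|x|})$, which absorbs $(\cosh x)^{\ve\tau}$ for any $\tau < 2\alpha_0$; the first two are $\mathcal{O}(\ve^2)$ in the weighted norm using (\ref{first f}) and the a priori bound (\ref{estim modulation}). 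Outside $\mathcal{O}_1\cup\mathcal{O}_2$ the distance to the nodal set is at least $\mathrm{d}_\ve(x)$, so by Lemma \ref{lemma 3.3} together with the exponential decay of $\bar u_\ve^2 - 1$ the error decays super-polynomially in $(\cosh x)^{\ve\tau}$. Hence $\|E(\bar u_\ve)\|_{\mathcal{C}^{0,\mu}_{\ve\tau}(\R^2)} \leq C\ve^2$.

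The second ingredient is uniform-in-$\ve$ invertibility of $L_{\bar u_\ve}$ on the subspace cut out by the two orthogonality conditions, from $\mathcal{C}^{0,\mu}_{\ve\tau}(\R^2)$ to $\mathcal{C}^{2,\mu}_{\ve\tau}(\R^2)$. I would establish this by a blow-up/contradiction argument: a counter-sequence $\phi_n$ of unit weighted norm with $L_{\bar u_{\ve_n}}\phi_n \to 0$ would, after translation to a point $z_n$ where the weighted norm is essentially attained, converge in $\mathcal{C}^2_{\mathrm{loc}}$ to a nontrivial bounded limit $\phi_\infty$. If $z_n$ stays at bounded Fermi distance from one nodal line, $\phi_\infty$ solves $L_0\phi_\infty = 0$ in $\R^2$ while being $L^2$-orthogonal to $H'(y)$ in the transverse variable, forcing $\phi_\infty \equiv 0$ by nondegeneracy of $H$; if the Fermi distance blows up, $F''(\bar u_\ve)$ converges uniformly to $F''(\pm 1) > 0$ and $\phi_\infty$ vanishes by the maximum principle. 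Combined with the error estimate, one obtains $\|\phi\|_{\mathcal{C}^{2,\mu}_{\ve\tau}(\R^2)} \leq C\ve^2 + C\|\phi\|_{L^\infty(\R^2)}\|\phi\|_{\mathcal{C}^{2,\mu}_{\ve\tau}(\R^2)}$; since $\|\phi\|_{L^\infty(\R^2)} = o(1)$ --- using (\ref{exp est 1}) for both $u$ and $\bar u_\ve$, whose nodal sets coincide by construction --- the quadratic term is absorbed for $\ve$ small and the claim follows.

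I expect the main difficulty to be the uniform invertibility step. The two nodal lines separate only logarithmically as $\ve\to 0$, so the blow-up argument must carefully treat the case where $z_n$ drifts into the overlap region, where both heteroclinic profiles contribute non-negligibly; there, orthogonality against \emph{both} $H'_{\ve, 1}$ and $H'_{\ve, 2}$ is essential to exclude nontrivial limits, and one must check that the cutoffs $\rho_{\ve, i}$ in the orthogonality relations pass to the limit correctly. A secondary point is choosing $\tau$ small enough that $\ve\tau$ remains below the decay rate of the interaction term, yet large enough that the blow-up limits are forced to be bounded.
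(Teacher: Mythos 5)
Your Step 2 (uniform invertibility via blow-up, then absorbing the quadratic $P(\phi)$) is essentially the paper's Proposition \ref{apriori} and its use in Lemma \ref{fif}, so that piece is fine. The problem is Step 1: the estimate $\|E(\bar u_\ve)\|_{\mathcal{C}^{0,\mu}_{\ve\tau}} \le C\ve^2$ is obtained by \emph{assuming} the Toda asymptotic $|f_{\ve,1}(x)| \approx \ve|x| + \alpha_0^{-1}\log(1/\ve)$, and by \emph{assuming} $\|h_\ve\|_{\mathcal{C}^{2,\mu}_{\ve\tau}}\le C\ve^2$ and (implicitly) $\|f''_{\ve,1}\|_{\mathcal{C}^{0,\mu}_{\ve\tau}}\le C\ve^2$. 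None of these are hypotheses of the proposition; they are precisely what needs to be proven. What is known a priori is only that the nodal line is asymptotic to a line of slope $\ve$ with \emph{finite} weighted norms (that is the content of (\ref{first f}), which asserts finiteness, not an $\ve^2$ bound), and that the offset $|f_{\ve,1}(0)|$ tends to infinity as $\ve\to 0$ (Lemma \ref{lema unique 1}), with no quantitative rate. Equation (\ref{estim modulation}) is explicitly flagged in the text as a statement ``we will show,'' not an available a priori bound. So the argument is circular: you use the conclusion of Propositions \ref{estim hat phi} and \ref{prop gamma} as an ingredient in proving Proposition \ref{estim hat phi}.

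The paper breaks this circle by a bootstrap in which the three ingredients you assume are actually derived. First (Lemmas \ref{Eu}--\ref{fi}), one only proves the self-referential bound $\|\phi\|_{\mathcal{C}^{2,\mu}_{\ve\tau}} + \|h_\ve\|_{\mathcal{C}^{2,\mu}_{\ve\tau}} + \|f''_{\ve,1}\|_{\mathcal{C}^{0,\mu}_{\ve\tau}} \lesssim \|\exp\{-2\sqrt{2}|f_{\ve,1}|(1+\varDelta_\ve)\}\|_{\mathcal{C}^{0,\mu}_{\ve\tau}}$, with the right-hand side a priori only finite. Second (Lemma \ref{est at 0}), the offset $|f_{\ve,1}(0)|$ is pinned down to equal $\frac{\sqrt{2}}{2}\log\frac{1}{\ve}+\mathcal{O}(1)$ via the Hamiltonian identity (\ref{ham 23}), by computing the transverse energy at $x=0$ for $u$ and for the existing solution $u_\ve$ of \cite{MR2557944} and matching both to $2\mathbf{e}_F\cos\theta(u)$. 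Third (Steps 1--3 of the proof of Proposition \ref{prop gamma}), an approximate Toda ODE for $p_{\ve,1}=f_{\ve,1}+h_\ve$ and a balancing-formula argument propagate this bound from $x=0$ to all of $\R$, yielding (\ref{ppp 1}): $\|\exp\{-2\sqrt{2}|f_{\ve,1}|(1+\varDelta_\ve)\}\|_{\mathcal{C}^{0,\mu}_{\ve\tau}}\le C\ve^2$. Only at that point does $\|\phi\|_{\mathcal{C}^{2,\mu}_{\ve\tau}}\le C\ve^2$ follow. The Hamiltonian-identity step and the Toda ODE step are both indispensable and are absent from your sketch; without them you have no way to rule out, say, $|f_{\ve,1}(0)|\sim\frac{1}{4\sqrt{2}}\log\frac{1}{\ve}$, which would give an error of only size $\ve^{1/2}$.
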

The proof of this Proposition, which is based on the a priori estimates for linear operator $L_{\bar u_\ve}$ in weighetd spaces and careful estimates of the error of the approximation $E(\bar u_\ve)$ is postponed for now and will be given in section \ref{proof {prop gamma}}. \red{However, it is not hard to show that a priori we have $\|\phi\|_{\mathcal{C}^0(\R^2)}=o(1)$, as $\ve\to 0$. The proof is based on De Giorgi conjecture and the fact that we know $\|\phi\|_{\mathcal{C}^{2,\mu}_{\ve\tau_0}(\R^2)}<\infty$ for some $\tau_0>0$. }

\subsection{Precise asymptotic of the nodal lines}

The point of this section is to describe precise,  and in particular  uniform  as $\ve\to 0$, estimates for the weighted norm  $\|f''_{\ve, i}\|_{\mathcal{C}^{0,\mu}_{\ve \tau}(\R)}$.
%
Our  curve of reference will be given by  a solution of the Toda system: 
\begin{equation}
\begin{aligned}
{{c_0}} \, q_1''  &=  - e^{\,\alpha_0 ( q_{1}- q_{2}) },\\
{{c_0}}\, q_2''  &= e^{\,\alpha_0(q_{1}- q_2)},
\end{aligned}
\label{toda 2}
\end{equation}
for which $q_1<0<q_2$ and $q_1(x)=-q_2(x)$, as well as $q_j(x)=q_j(-x)$, $j-1,2$ (c.f. (\ref{toda 02})). Such solution is determined by one parameter only, and in fact  we only need to solve 
\begin{align}
\label{toda 3}
{{c_0}}\, q_1''=-2e^{\, 2\alpha_0 q_1},
\end{align}
in the class of even functions. It is easy to see that solutions of (\ref{toda 3}) form a one parameter family, and each solution of this family has asymptotically linear behavior. In fact this family can be parametrized by the slope of this straight line.   To describe this family precisely  let us consider the unique solution $U_0(x)$, whose slope at $\infty$ is $-1$. Asymptotically, as $|x|\to \infty$, we have
\[
U_0(x)=-|x|+{b}_0+\mathcal{O}(e^{\,-2\alpha_0|x|}),
\]
where $b_0$ is a fixed constant. 
Then we have 
\[
q_{\ve, 1}(x)=U_0(\ve x)-\frac{1}{\alpha_0}\log\frac{1}{\ve}.
\]
Thus, given the nodal line $\Gamma_{\ve,1}$ of a solution $u$, with $\ve=\tan\theta(u)$, by $q_{\ve,1}$ we will denote the solution of (\ref{toda 3}) whose slope at infinity is $\ve$. Respectively we set 
\[
q_{\ve,2}=- q_{\ve,1}.
\] 
We will denote by $\widetilde \Gamma_{\ve,1}$ the curve $y=q_{\ve,1}(x)$ in the lower half plane and by $\widetilde \Gamma_{\ve,2}$ the graph of $y=q_{\ve,2}$.  The hope is that the nodal set in the lower half plane of a function $u$, with $\ve=\tan\theta(u)$ small,   and $\widetilde\Gamma_{\ve,1}$ should be close to each other. To quantify this  is the objective of the next result. 
\begin{proposition}\label{prop gamma}
Let $u$ be a four end solution of (\ref{AC}) such that  $\ve =\tan \theta(u)$ is small and let $\Gamma_{\ve,1}$ be the nodal line of this solution on the lower half plane, given as a graph of the function $y=f_{\ve,1}(x)$, and let  $h_\ve\in {\mathcal C}^{2,\mu}(\R)$ be the modulation function described above. There exist  $\alpha>0$ and a constant $j_\ve$, where $|j_\ve|\leq C\ve^\alpha$,  such that the following estimates  hold for the function $\chi_{\ve,1}=f_{\ve,1}+h_\ve+j_\ve-q_{\ve,1}$:
\begin{equation}
\label{first estimate}
\begin{aligned}
\left\Vert \chi_{\ve,1}\right\Vert _{{\mathcal C}^{0,\mu}_{\ve \tau}\left(\R\right)}  &  \leq C\varepsilon^{\alpha},\\
\left\Vert \chi_{\ve,1}^{\prime}\right\Vert _{{\mathcal C}^{0,\mu}_{\ve\tau}\left(\R\right)}  &  \leq C\varepsilon^{1+\alpha
},\\
\left\Vert \chi_{\ve,1}^{\prime\prime}\right\Vert _{{\mathcal C}^{0,\mu}_{\ve \tau}\left(\R\right)}  &  \leq C\varepsilon
^{2+\alpha}.
\end{aligned}
\end{equation}
Similar statements hold for the nodal line $\Gamma_{\ve,2}$ of $u$ in the upper half plane, with $\chi_{\ve,2}=f_{\ve,2}-h_\ve-j_\ve-q_{\ve,2}$,  replacing $\chi_{\ve,1}$.
\end{proposition}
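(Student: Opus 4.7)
The plan is to derive a reduced ODE for $g_\ve := f_{\ve,1} + h_\ve$ that differs from the Toda equation (\ref{toda 3}) only by a higher order remainder, and then to invert the linearized Toda operator on the even subspace in order to control $\chi_{\ve,1}$ modulo a one dimensional obstruction absorbed by the free constant $j_\ve$. Writing $u = \bar u_\ve + \phi$ with $\|\phi\|_{\mathcal{C}^{2,\mu}_{\ve\tau}(\R^2)} \leq C\ve^2$ by Proposition \ref{estim hat phi}, I would express the Allen-Cahn equation in Fermi coordinates $(x_1, y_1)$ adapted to $\Gamma_{\ve,1}$, multiply by $\rho_{\ve,1}\, H'(y_1 - h_\ve(x_1))$ and integrate in $y_1$; the orthogonality condition defining $h_\ve$ eliminates the leading linear contribution of $\phi$. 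The left-hand side is dominated by the curvature of $\Gamma_{\ve,1}$ combined with $h_\ve''$, summing to $(f_{\ve,1}+h_\ve)''$ up to terms quadratic in $f'_{\ve,1}$ (which are $o(1)$ as $\ve \to 0$), while the right-hand side receives a Toda type exponential term from the interaction with the symmetric upper nodal line $\Gamma_{\ve,2}$ via the asymptotic $H(t) = 1 - A e^{-\alpha_0 t} + \mathcal{O}(e^{-2\alpha_0 t})$ and the distance formula (\ref{ddd 2}). Careful bookkeeping yields
\begin{align*}
c_0 \, g_\ve''(x) = -2\, e^{\,2\alpha_0 \, g_\ve(x)} + R_\ve(x), \qquad \|R_\ve\|_{\mathcal{C}^{0,\mu}_{\ve\tau}(\R)} \leq C\ve^{2+\alpha},
\end{align*}
where $R_\ve$ collects the Fermi metric corrections, the higher order terms in the expansion of $F'(\bar u_\ve)$, and the quadratic and lower order projections of $\phi$.

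Subtracting the Toda equation (\ref{toda 3}) for $q_{\ve,1}$ from the reduced ODE, the difference $\psi_\ve := g_\ve - q_{\ve,1}$ satisfies
\begin{align*}
c_0 \, \psi_\ve'' + 4 \alpha_0 \, e^{\, 2\alpha_0 \, q_{\ve,1}} \, \psi_\ve = R_\ve + Q_\ve(\psi_\ve),
\end{align*}
with $Q_\ve$ at least quadratic. After the rescaling $\xi = \ve x$ the linearized Toda operator becomes $\ve$-independent, and since $e^{\,2\alpha_0 q_{\ve,1}(x)}$ decays exponentially at infinity, the equation reduces there to $c_0 \partial_\xi^2 w = 0$. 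Consequently there are two asymptotically independent Jacobi fields: the odd translation mode $q_{\ve,1}'$ (bounded) and an even mode arising from the one parameter family of slopes of the Toda system, which grows linearly at infinity. Restricting to the even class eliminates the translation mode, and the free constant $j_\ve$ is introduced to absorb the remaining one dimensional obstruction generated by the linearly growing even Jacobi field; this is precisely why the statement refers to $\chi_{\ve,1} + j_\ve$ rather than $\chi_{\ve,1}$ alone.

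The main technical obstacle is constructing, uniformly in $\ve$, a right inverse for $c_0 \partial_x^2 + 4\alpha_0 e^{\,2\alpha_0 q_{\ve,1}}$ from $\mathcal{C}^{0,\mu}_{\ve\tau}(\R)$ into itself on the even subspace modulo the constant mode, together with the sharp estimate that the weight exponent $\ve\tau$ with $\tau < \alpha_0$ is compatible with the decay of the Green function after projecting out the growing direction. Once this linear theory is in place, a contraction mapping argument in $\mathcal{C}^{0,\mu}_{\ve\tau}(\R)\times \R$ produces $(\chi_{\ve,1}, j_\ve)$ satisfying $\|\chi_{\ve,1}\|_{\mathcal{C}^{0,\mu}_{\ve\tau}} + |j_\ve| \leq C\ve^\alpha$. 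The derivative bounds in (\ref{first estimate}) follow from Schauder ODE estimates applied to the reduced equation, with the extra factors $\ve$ and $\ve^2$ produced by differentiating in the rescaled variable $\xi = \ve x$. The analogous statement for $\Gamma_{\ve,2}$ is immediate from the symmetry of $u$ in $y$.
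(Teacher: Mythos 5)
Your strategy—project the equation onto $H'$ in Fermi coordinates, derive a perturbed Toda ODE for $g_\ve = f_{\ve,1}+h_\ve$, subtract $q_{\ve,1}$, and invert the linearized Toda operator with the free constant $j_\ve$ absorbing the even Jacobi field—captures a correct piece of the mechanism, but there are two serious gaps that make the argument circular or incomplete as written.

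First, you invoke Proposition \ref{estim hat phi} (i.e.\ $\|\phi\|_{\mathcal{C}^{2,\mu}_{\ve\tau}(\R^2)}\le C\ve^2$) as an input at the outset. In the paper this estimate is a \emph{consequence} of Proposition \ref{prop gamma}, not a prerequisite: the proofs are given together in Section \ref{proof {prop gamma}}, and the $C\ve^2$ bound for $\phi$ appears only at the end of the argument (formula (\ref{ppp 1})), once the asymptotics $f_{\ve,1}(x)\approx q_{\ve,1}(x)$ have been established. Before that point one only has the implicit bound of Lemma \ref{fi}, namely that $\|\phi\|$ and $\|f_{\ve,1}''\|$ are controlled by $\|\exp\{-2\sqrt{2}|f_{\ve,1}|(1+\varDelta_\ve)\}\|$, which is a self-referential quantity. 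To break the circle the paper needs the extra input of Lemma \ref{est at 0}, which pins down $|f_{\ve,1}(0)-q_{\ve,1}(0)|\le C\ve^{\alpha_1}$ via the Hamiltonian identity (\ref{ham 23}) and a comparison with the explicit solutions of \cite{MR2557944}; only then does one obtain the soft bound $\|\phi\|_{\mathcal{C}^{2,\mu}(\R^2)}\le C_\varsigma\ve^{2-\varsigma}$ needed to even write down the reduced Toda equation on a nondegenerate interval. Your proposal contains no analogue of this Hamiltonian-identity step, so the reduced ODE you write down for $g_\ve$ is not justified a priori.

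Second, and related, your treatment of the linear problem is more optimistic than what actually works. The reduced Toda equation can only be derived on an interval $I_a$ where $|p_{\ve,1}|<2|\log\ve|$ and $|p_{\ve,1}''|$ is small, and a priori one does not know that $a$ can be taken of order $|\log\ve|/\ve$, let alone $\infty$. The paper handles this by (i) solving the linearized Toda equation on $I_a$ by variation of parameters with explicit control of the fundamental solutions $\varsigma_{\ve,1},\varsigma_{\ve,2}$, obtaining local estimates that show $a$ can be pushed to $|\log\ve|/\ve$, and then (ii) a separate global argument (Step 3) based on the balancing formula (\ref{eq:lsr}) applied with the rotation-type vector field $X=(f_{\ve,1}(x_\ve)-y,\,x-x_\ve)$ on the domain $\Omega_L$, which by contradiction forces the nodal set to hug the asymptotic straight line for $|x|>|\log\ve|/\ve$. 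Your proposal replaces both of these with a single global right-inverse for $c_0\partial_x^2 + 4\alpha_0 e^{2\alpha_0 q_{\ve,1}}$ on $\mathcal{C}^{0,\mu}_{\ve\tau}(\R)_e\oplus\R$, but you neither show this operator is invertible uniformly in $\ve$ on that space, nor address the fact that the source term $R_\ve$ is only controlled once one already knows the global nodal asymptotics. In short: the Hamiltonian identity anchor (Lemma \ref{est at 0}) and the balancing-formula extension to large $|x|$ (Step 3 of the proof of Proposition \ref{prop gamma}) are the two indispensable ingredients missing from your proposal, and without them the reduction-to-Toda plan cannot get off the ground.
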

This proposition is the main technical tool needed to prove the  uniqueness and.  Its proof is quite involved and we will postpone it for now proceeding directly to the proof of Theorem \ref{teo uniqueness}.

\section{Proofs of Proposition \ref{estim hat phi} and  Proposition \ref{prop gamma}}\label{proof {prop gamma}}

We recall that by definition  $h_\ve$ is required to be such that the following  orthogonality  condition are satisfied
\begin{equation}
\label{orto h}
\int_{\mathbb{R}}\big(\mathbf{x}_{\ve,i}^{\ast}\left(  u-\bar{u}_\ve\right)  \rho_{i,\ve}%
H_{\ve,i}^{\prime}\big)(x_i,y_i)dy_{i}=0,\quad \forall x_{i}\in\mathbb{R}, \quad i=1,2.
\end{equation}
We will refer to $h_\ve$ as the {\it modulation function}, and we keep in mind that $h_\ve$ is required to be small.
Our first objective is to show that the modulation function $h_\ve$ indeed exists. 

\begin{lemma}\label{exists h}
For each sufficiently small $\ve$ there exists a function $h_\ve\in {\mathcal C}^3(\R)$ such that (\ref{orto h}) holds. 
\end{lemma}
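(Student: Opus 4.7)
The plan is to find $h_\ve$ as the zero of a smooth nonlinear operator on a suitable Banach space, via the implicit function theorem applied at $h \equiv 0$. For a candidate $h$ in a small ball of $\mathcal C^{2,\mu}_{\ve\tau}(\R)$, define
\[
\mathcal T_\ve(h)(x_1) := \int_{\R} \bigl(\mathbf{x}_{\ve,1}^*(u - \bar u_\ve[h])\bigr)(x_1, y_1)\, \rho_{\ve,1}[h](x_1, y_1)\, H'_{\ve,1}[h](x_1, y_1)\, dy_1,
\]
where $\bar u_\ve[h]$, $\rho_{\ve,1}[h]$, $H'_{\ve,1}[h]$ are defined as in (\ref{def hve}) with $h$ in place of $h_\ve$. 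Because the integrand is supported in $|y_1| \lesssim \mathrm{d}_\ve(x_1) \lesssim \log(1/\ve)$, this is a well-defined integral depending smoothly on $h$ and on $x_1$. The orthogonality condition for $i=1$ is exactly $\mathcal T_\ve(h) \equiv 0$, and by the even symmetry of $u$ together with the symmetric construction $H_{\ve,2}(x,y) = H_{\ve,1}(x,-y)$ the same $h$ automatically solves the $i=2$ condition, so only one equation needs to be addressed.

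Next I would verify the two hypotheses of the implicit function theorem at $h \equiv 0$. For the smallness of $\mathcal T_\ve(0)$, I would use that both $u$ and $\bar u_\ve[0]$ vanish on $\Gamma_{\ve,1}$, together with the refined asymptotics of Proposition \ref{refined asymp} and the remarks from Section \ref{nodal asymp} that $\|f'_{\ve,1}\|_{\mathcal C^2(\R)} \to 0$ and $\|\phi\|_{\mathcal C^0(\R^2)} = o(1)$, to conclude $\|\mathbf{x}_{\ve,1}^*(u - \bar u_\ve[0])\|_{L^\infty(\mathrm{supp}\,\rho_{\ve,1})} = o(1)$, hence $\|\mathcal T_\ve(0)\|_{\mathcal C^{0,\mu}_{\ve\tau}(\R)} \to 0$. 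For the invertibility of $D\mathcal T_\ve(0)$, differentiation under the integral sign gives
\[
D\mathcal T_\ve(0)\, \tilde h(x_1) = -\tilde h(x_1) \int_{\R} H'(y_1)^2\, \rho_{\ve,1}(x_1, y_1)\, dy_1 + o(\|\tilde h\|),
\]
and the leading coefficient is $-{\tt e}_F + o(1)$ uniformly in $x_1$, where ${\tt e}_F = \int_\R (H')^2 > 0$. Thus $D\mathcal T_\ve(0)$ is multiplication by a function bounded away from zero plus a small perturbation, and hence an isomorphism $\mathcal C^{2,\mu}_{\ve\tau}(\R) \to \mathcal C^{0,\mu}_{\ve\tau}(\R)$ for $\ve$ small.

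The implicit function theorem then yields, for each sufficiently small $\ve$, a unique small $h_\ve \in \mathcal C^{2,\mu}_{\ve\tau}(\R)$ with $\mathcal T_\ve(h_\ve) = 0$; joint smoothness of the integrand in $(h, x_1)$, combined with the regularity of $u$ as a solution of (\ref{AC}), upgrades $h_\ve$ to a $\mathcal C^3$ function (in fact $\mathcal C^\infty$). The main technical obstacle is the uniform-in-$x_1$ control of both $\mathcal T_\ve(0)$ and the remainder term in the linearization: both rely on quantifying how the contribution of the far nodal line $\Gamma_{\ve,2}$ to $\bar u_\ve$ inside the support of $\rho_{\ve,1}$ is $O(\ve)$, since the separation between the two nodal lines is $\gtrsim \alpha_0^{-1}\log(1/\ve)$ by Lemma \ref{lema unique 1} and the lower bound (\ref{def dveee}) on $\mathrm{d}_\ve$, as well as on the uniform transverse decay of $u - H(y_1)$ away from $\Gamma_{\ve,1}$ provided by the refined asymptotics.
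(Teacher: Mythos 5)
Your approach is sound in spirit but is genuinely different from the paper's and has one concrete technical slip that breaks the argument as written.

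The paper decouples the two modulations: it first replaces $h_\ve$ by a pair $(h_{\ve,1},h_{\ve,2})$ and, for $h_{\ve,2}$ held fixed, observes that after shifting the $y_1$ variable the $i=1$ orthogonality condition reduces to a scalar equation $k_\ve(s,x_1)=0$ in $s=h_{\ve,1}(x_1)$ for each $x_1$ separately, solved by monotonicity of $s\mapsto k_\ve(s,x_1)$. It then shows the induced map $T\colon h_{\ve,2}\mapsto h_{\ve,1}$ is a contraction on a small ball of $\mathcal C^0(\R)$ and finally upgrades the fixed point to $\mathcal C^3(\R)$ with the (finite--dimensional, pointwise in $x_1$) implicit function theorem. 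You keep a single $h$ in both $H_{\ve,1}$ and $H_{\ve,2}$ and attack $\mathcal T_\ve(h)=0$ directly with the Banach-space IFT. That loses the pointwise structure the paper exploits, but the cost is small because the nonlocality (the dependence of $H_{\ve,2}[h]$ on $h$ evaluated at the Fermi coordinate $x_2$ of the opposite nodal line) enters only in terms of size $O(\ve^2)$, thanks to the separation $\gtrsim\alpha_0^{-1}\log(1/\ve)$ between the two nodal lines. So the plan is viable and more compact.

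The slip: you assert that $D\mathcal T_\ve(0)$ is an isomorphism from $\mathcal C^{2,\mu}_{\ve\tau}(\R)$ to $\mathcal C^{0,\mu}_{\ve\tau}(\R)$. It cannot be. To leading order $D\mathcal T_\ve(0)$ is multiplication by a function $c(x_1)$ with $|c|\geq {\tt e}_F/2$, and a multiplication operator does not gain two derivatives; it is injective and bounded from $\mathcal C^{2,\mu}_{\ve\tau}$ to $\mathcal C^{0,\mu}_{\ve\tau}$, but not surjective, hence not invertible in the sense required by the IFT. You must take the domain and target to be the same space --- $\mathcal C^0(\R)$ as in the paper, or $\mathcal C^{0,\mu}_{\ve\tau}(\R)$ --- where multiplication by $c$ (with $c$ bounded away from $0$ and Lipschitz/H\"older, uniformly in $\ve$) really is an isomorphism; then recover $\mathcal C^3$ regularity by a separate bootstrap, exactly as the paper does. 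A second, minor point: invoking $\|\phi\|_{\mathcal C^0(\R^2)}=o(1)$ to bound $\mathcal T_\ve(0)$ is circular, since $\phi=u-\bar u_\ve[h_\ve]$ presupposes that $h_\ve$ already exists. What you actually need, and what is available, is the De Giorgi/blow-up compactness fact that $\mathbf x_{\ve,1}^*u - H(y_1)=o(1)$ uniformly on $\mathrm{supp}\,\rho_{\ve,1}$ (the same input behind $\|f'_{\ve,1}\|_{\mathcal C^2(\R)}\to0$, which you do cite), together with $|H_{\ve,2}[0]+1|=O(\ve^2)$ there. Finally, note that since $\mathcal T_\ve(0)\neq0$, you are really invoking a quantitative IFT (Newton--Kantorovich form), where the admissible size of $\|\mathcal T_\ve(0)\|$ is dictated by $\|(D\mathcal T_\ve(0))^{-1}\|$ and the modulus of continuity of $D\mathcal T_\ve$ on the small ball; these must be checked to hold uniformly in $\ve$.
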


\begin{proof}
To find $h_\ve$ such that the orthogonality (\ref{orto h})  condition is satisfied, we first replace the function $h_\ve$ in the definition of 
the functions $H_{\ve,1}$ and $H_{\ve,2}$ be  two undetermined, bounded 
functions $h_{\ve,1}$ and $h_{\ve,2}.$ More precisely, given a function $h_{\ve,2}$ in
suitable function space, we have a function $H_{\ve,2}$ which in the Fermi
coordinate $\left(  x_{2},y_{2}\right)  $ is equal to $-H\left(  y_{2}%
+h_{2,\ve}\left(  x_{i}\right)  \right)$, at least near $\Gamma_{\ve, 2}$.  Given this, we want to find the function $h_{\ve,1},$
corresponding to the modulation of the nodal line $\Gamma_{\ve,1}$,   
such that for the resulting approximate solution $H_{\ve,1}$, the orthogonal
condition (\ref{orto h})  is satisfied for $i=1$.  Note that so far the orthogonal
condition for $i=2$ still may not be hold. However, if it
happens that $h_{\ve,2}=h_{\ve,1}=h_\ve$ then, by symmetry,  the orthogonal condition is also satisfied
for $i=2$ and this will yield the desired modulation function.

To find a $h_{\ve,2}$ such that $h_{\ve,1}=h_{\ve,2},$ we will use a fixed point argument. For brevity we will assume a priori that $h_{\ve,2}\in {\mathcal C}^0(\R)$, thus yielding $h_\ve\in {\mathcal C}^0(\R)$. Generalizing the argument to get $h_\ve\in {\mathcal C}^3(\R)$ is straightforward. 

Obviously,
\[
\int_{\mathbb{R}}({\mathbf x}_{\ve,1}^*\bar{u}_\ve\rho_{\ve,1}H_{\ve,1}^{\prime})(x_1, y_1)dy_{1}=-\int_{\mathbb{R}%
}\left[{\mathbf x}^*_{\ve,1} \left(H_{\ve,2}+1\right)  \rho_{\ve,1}H_{\ve,1}^{\prime}\right](x_1, y_1)dy_{1}.
\]
This identity suggests to  consider the function
\[
k_\ve(s, x_1)  :=\int_{\mathbb{R}}\rho\left( x_1,  y_{1}\right)  H^{\prime
}\left(  y_{1}\right) \big[{\mathbf x}_{\ve,1}^*(u+H_{\ve,2}+1)\big](  x_{1},y_{1}+s)   dy_{1}, \quad s\in \R.
\]
Note that the orthogonality condition (\ref{orto h}) is equivalent  to $k_\ve(s, x_1)=0$ with $s=h_{\ve,1}(x_1)$, which follows by changing variables $y_1\mapsto y_1+h_{\ve,1}(x_1)$ in the integral expression. 
  
We have
\begin{align*}
\partial_s k_\ve\left(  s, x_1\right)  &=\int_{\mathbb{R}}\rho\left(x_1,  y_{1}\right) H^{\prime}\left(  y_{1}\right)\partial_{y_1}  \big[{\mathbf x}_{\ve,1}^*(u+H_{\ve,2}+1)\big](  x_{1},y_{1}+s) dy_{1}\\
&=\underbrace{\int_{\mathbb{R}}\rho\left(x_1,  y_{1}\right) H^{\prime}\left(  y_{1}\right)\partial_{y_1}  \big[{\mathbf x}_{\ve,1}^*(H_{\ve,1}+H_{\ve,2}+1)\big](  x_{1},y_{1}+s) dy_{1}}_{l_\ve(s,x_1)}
\\
&\quad +\underbrace{\int_{\mathbb{R}}\rho\left( x_1, y_{1}\right) H^{\prime}\left(  y_{1}\right)\partial_{y_1}  \big[{\mathbf x}_{\ve,1}^*(u-H_{\ve,1})\big](  x_{1},y_{1}+s) dy_{1}}_{m_\ve(s, x_1)}
\end{align*}
Then, we see that for each $\delta>0$ there exists an $a>0$ such that $l_\ve(s,x_1)>\delta$ for $s\in (-a,a)$ uniformly in small $\ve$ and $x_1$. 

Since  $u$ converges locally, as $\ve\to 0$, to the heteroclinic solution, for each sufficiently small $\ve$ we claim that  it holds, 
\begin{align*}
|m_\ve(s, x_1)|\leq \frac{\delta}{2}, \quad \forall x_1\in \R.
\end{align*}
The proof of this claim based on the De Giorgi conjecture is left to the reader. 

Then it is seen that $\partial_s k_\ve\left(s, x_1\right)  >\delta/2$ for $s\in\left(
-a,a\right)  ,$ and $x_1\in \R$ where $a$ is small but independent of $\varepsilon.$ We will prove  that taking $a$ smaller if necessary we may assume
$k_\ve\left(  a, x_1\right)  >0$ and $k_\ve\left(  -a, x_1\right)  <0$ for $\varepsilon$ small
enough. Indeed let us write:
\begin{align*}
k_\ve(s, x_1)&=\underbrace{\int_{\mathbb{R}}\rho\left(x_1,  y_{1}\right) H^{\prime}\left(  y_{1}\right) H(y_{1}+s) dy_{1}}_{k_0(s)}\\
&\quad +
\underbrace{\int_{\mathbb{R}}\rho\left( x_1, y_{1}\right) H^{\prime}\left(  y_{1}\right)  \big[{\mathbf x}_{\ve,1}^*(H_{\ve,2}+1)\big](  x_{1},y_{1}+s) dy_{1}}_{g_{1,\ve}(s,x_1)}\\
&\quad +
\underbrace{\int_{\mathbb{R}}\rho\left( x_1, y_{1}\right) H^{\prime}\left(  y_{1}\right)  \big[{\mathbf x}_{\ve,1}^*(u-H_{\ve,1})\big](  x_{1},y_{1}+s) dy_{1}}_{g_{2,\ve}(s,x_1)}.
\end{align*}
Then we see that
\begin{equation}
\label{hhh 1}
k_0(s)=s\int\rho(x_1,y_1)\big(H'(y_1)\big)^2\,dy_1+k_1(s), \quad k_1(s)\sim s^2, 
\end{equation}
while
\begin{equation}
\label{hhh 2}
g_{1,\ve}(s,x_1)=o(1) e^{\,-\sqrt{2}( |h_{\ve,2}|+|s|)}, \quad g_{2,\ve}(s,x_1)=o(1), \quad\mbox{as}\ \ve\to 0.
\end{equation}
Therefore for fixed $h_{\ve,2},$ the existence of $h_{\ve,1}$ which fulfills
the orthogonal condition (\ref{orto h})  follows immediately.
The above argument implies that for any $h_{\ve,2}\in {\mathcal C}^{0}\left(  \mathbb{R}%
\right)  ,\left\Vert h_{\ve,2}\right\Vert_{{\mathcal C}^0(\R)} < a,$ we have a nonlinear map $T$ defined by
$h_{\ve,2}\mapsto h_{\ve,1}.$ The map $T$  satisfies
\[
TB\left(  0,a\right)  \subset B\left(  0,a\right), \quad B(0,a)=\{h\in {\mathcal C}^0(\R)\mid \|h\| _{{\mathcal C}^0(\R)} < a\}.
\]
The proof that $T$ is a contraction map is standard and is omitted. At the end we obtain the existence of a fixed point $h_\ve$. To prove its regularity we note that the fact that we have $\partial_s k_\ve(s, x_1)=l_\ve(x_1s)+m_\ve(s,x_1)$, allows to use the implicit function theorem and thus the regularity follows in a  straightforward manner. This ends the proof.  
\end{proof}

\begin{corollary}
\label{corollary orto 1}
The modulation function $h_\ve$ satisfies:
\begin{equation}
\|h_\ve\|_{\mathcal C^{2,\mu}(\R)}=o(1), \quad \ve\to 0.
\end{equation}
We also have $h_\ve\in \mathcal{C}^{2,\mu}_{\ve\tau}(\R)$.
\end{corollary}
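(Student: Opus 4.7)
The plan is to read off both the quantitative smallness and the weighted decay directly from the implicit characterization $k_\ve(h_\ve(x_1), x_1) = 0$ obtained in the previous lemma, together with the uniform lower bound $\partial_s k_\ve \geq \delta/2$ on an $\ve$-independent neighborhood $(-a, a)$ of $s = 0$ that was already established there. Observe first that $k_0(0) = 0$ since $\rho(x_1, \cdot)$ is even while $y_1 \mapsto H'(y_1) H(y_1)$ is odd; hence $k_\ve(0, x_1) = g_{1,\ve}(0, x_1) + g_{2,\ve}(0, x_1)$, and both terms go to $0$ uniformly in $x_1$ as $\ve \to 0$ by (\ref{hhh 2}). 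Taylor expansion in $s$ together with the lower bound on $\partial_s k_\ve$ then yields $\|h_\ve\|_{\mathcal{C}^0(\R)} = o(1)$.

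To upgrade this to a $\mathcal{C}^{2,\mu}$ bound, I would invoke the implicit function theorem in the Banach space $\mathcal{C}^{2,\mu}(\R)$. Differentiating $k_\ve(h_\ve(x_1), x_1) = 0$ in $x_1$ gives $h_\ve' = -(\partial_{x_1} k_\ve)/(\partial_s k_\ve)$ and an analogous expression for $h_\ve''$. The numerators are controlled by derivatives of $\rho$ (of size $\mathcal{O}(1/\mathrm{d}_\ve)$ on the support of $\nabla \rho$, hence uniformly bounded once $\ve$ is small), the smoothness of $H$ and $H'$, and the geometric coefficients of the Fermi change of variables $\mathbf{x}_{\ve,1}^*$, all ultimately controlled by $\|f_{\ve,1}'\|_{\mathcal{C}^2(\R)}$ --- a quantity which is itself $o(1)$ by the discussion following Lemma \ref{lema unique 1}. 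Combined with the already established smallness of $k_\ve(0, \cdot)$, standard H\"older estimates on the integral expressions deliver $\|h_\ve\|_{\mathcal{C}^{2,\mu}(\R)} = o(1)$.

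For the weighted membership $h_\ve \in \mathcal{C}^{2,\mu}_{\ve\tau}(\R)$, the task reduces to showing that $k_\ve(0, x_1)$ decays like $(\cosh x_1)^{-\ve\tau}$ for some $\tau > 0$. Two contributions must be tracked. For $g_{1,\ve}(0, x_1)$: on the support of $\rho(x_1, \cdot)$ one has $|y_1| \leq \tfrac{3}{4}\mathrm{d}_\ve(x_1)$, so by (\ref{ddd 2}) the corresponding $y_2$-coordinate obeys $|y_2| \geq 2|f_{\ve,1}(x_1)|(1 + o(1))$. Since $f_{\ve,1}$ has asymptotic slope $\ve$ and $|f_{\ve,1}(0)| \to \infty$ by Lemma \ref{lema unique 1}, we get $|f_{\ve,1}(x_1)| \geq c\,\ve\,|x_1|$ for $|x_1|$ large, whence $|\mathbf{x}_{\ve,1}^*(H_{\ve,2} + 1)|$ decays as $e^{-2\alpha_0 c\ve|x_1|}$. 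For $g_{2,\ve}(0, x_1) = \int \rho\, H'(y_1)\,\mathbf{x}_{\ve,1}^*(u - H_{\ve,1})\,dy_1$, the decay is inherited from the inclusion $u - u_\lambda \in \mathcal{C}^{2,\mu}_{\ve\tau}$, established in the discussion preceding (\ref{first f}) via Proposition \ref{refined asymp}, once one replaces the global model $u_\lambda$ by the local Fermi-coordinate approximant $H_{\ve,1}$ on the support of $\rho$ (the two differ by an exponentially small quantity there). Dividing by $\partial_s k_\ve \geq \delta/2$ and invoking the same implicit-function-theorem machinery in the weighted space produces the desired conclusion.

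The main obstacle, I expect, lies in the last step: carefully matching the decay rates across coordinate systems and checking that the Fermi distortion $\mathbf{x}_{\ve,1}^*$ does not destroy the weight $(\cosh x_1)^{\ve\tau}$. Here the facts that $\|f_{\ve,1}'\|_{\mathcal{C}^0(\R)} \to 0$ and that by (\ref{ddd 1}) the Euclidean and Fermi versions of $\mathrm{d}_\ve$ differ only by a factor $1 + o(\ve)$ are precisely what makes the replacement admissible while losing at most an $o(1)$ factor in the decay rate.
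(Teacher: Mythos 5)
Your overall strategy mirrors the paper's own (terse) proof: read off the $\mathcal C^0$ smallness from the decomposition $k_\ve = k_0 + g_{1,\ve} + g_{2,\ve}$ of Lemma \ref{exists h} together with the lower bound on $\partial_s k_\ve$, then upgrade to higher derivatives and to the weighted space by differentiating the implicit relation and invoking the a~priori membership $u - u_\lambda \in \mathcal C^{2,\mu}_{\ve\tau}(\R^2)$. That part is sound and is essentially what the authors mean by ``the same can be shown for the higher order derivatives; one can use again (\ref{hhh 1})--(\ref{hhh 2}) and the fact that a~priori $u\in\mathcal C^{2,\mu}_{\ve\tau}$''.

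There is, however, a concrete error in your treatment of $g_{1,\ve}$. You claim that $|y_1|\le\tfrac34\,\mathrm d_\ve(x_1)$ together with (\ref{ddd 2}) forces $|y_2|\ge 2|f_{\ve,1}(x_1)|(1+o(1))$ on the whole support of $\rho(x_1,\cdot)$. But (\ref{ddd 2}) only gives $|y_2|\ge 2|f_{\ve,1}|(1+o(1)) - |y_1|$, and by the definition (\ref{def dveee}) together with the fact that $\|f_{\ve,1}''\|_{\mathcal C^0}\sim\ve^2$ one has $\mathrm d_\ve(x)\sim \ve^{-1}$, which is much larger than $|f_{\ve,1}(x)|\sim|\log\ve|+\ve|x|$ in the relevant range. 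So for $y_1$ near the edge of the support of $\rho$ the right-hand side of your bound is large and negative, giving no information. What saves the estimate is not (\ref{ddd 2}) plus the width of $\rho$ but the exponential damping $H'(y_1)\sim e^{-\sqrt 2|y_1|}$, which confines the effective domain of integration to $|y_1|=\mathcal O(1)$; only there does one get $|y_2|\approx 2|f_{\ve,1}|$. Your final conclusion is still correct once the integral is split according to the size of $y_1$ and the $H'$ factor is used, but as written the key inequality is false.

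A second, smaller issue: in the implicit-differentiation step you say the numerators ``are all ultimately controlled by $\|f_{\ve,1}'\|_{\mathcal C^2(\R)}$''. This covers the geometric coefficients of the Fermi change of variables, but $\partial_{x_1}k_\ve$ also involves $\partial_{x_1}\big(\mathbf x_{\ve,1}^*u\big)$, hence $u_x$ near $\Gamma_{\ve,1}$. One must separately observe that $u_x$ is $o(1)$ in the relevant strip --- either by differentiating $u(x,f_{\ve,1}(x))=0$ to get $u_x=-f_{\ve,1}'u_y$ on the nodal line, or by the $\mathcal C^2_{\mathrm{loc}}$ convergence of $u$ to the heteroclinic, as the paper uses in the discussion following Lemma \ref{lema unique 1}. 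Without this remark the smallness of $h_\ve'$ does not follow from the smallness of $\|f_{\ve,1}'\|$ alone.
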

\begin{proof}
The fact that $\|h_\ve\|_{\mathcal {C}^0(\R)}\to 0$ as $\ve \to 0$ follows from (\ref{hhh 1})--(\ref{hhh 2}). Then the same can be shown for the higher order derivatives. Once the existence of small $h_\ve$ is established one can use again (\ref{hhh 1})--(\ref{hhh 2}) and the fact that a priori $u\in \mathcal {C}^{2,\mu}_{\ve\tau}(\R^2)$ to show that $h_\ve\in \mathcal {C}^{2,\mu}_{\ve \tau}(\R)$. 
\end{proof}

Let  us  write $u=\bar{u}_\ve+\phi$, where $u$ is a solution.  Let us denote the linearization of the Allen-Cahn equation around $\bar u$ by $L_{\bar u_\ve}:=-\Delta + F''(\bar u_\ve)$. Then $\phi$ satisfies
\begin{equation}
L_{\bar{u}_\ve}\phi
=\Delta\bar{u}_\ve-F^{\prime}\left(  \bar{u}_\ve\right)  -P\left(  \phi\right)  .
\label{ubar}%
\end{equation}
Here
\[
P\left(  \phi\right)  =F^{\prime}\left(  \bar{u}_\ve+\phi\right)  -F^{\prime
}\left(  \bar{u}_\ve\right)  -F^{\prime\prime}\left(  \bar{u}_\ve\right)  \phi\sim\phi^2,
\]
is a higher order term in $\phi$. Note that our definition of $\bar u_\ve$ and the construction of the function $h_\ve$
implies  that $\phi=u-\bar u_\ve$ satisfies the  orthogonality condition  (\ref{orto h}). Our strategy 
 to get suitable estimate for $\phi$ relies on the a priori estimates
for the operator $L_{\bar{u}}$, taking into account this orthogonality condition.

To carry out the analysis, we will  study the error term $E\left(  \bar
{u}_\ve\right)  :=\Delta\bar{u}_\ve-F^{\prime}\left(  \bar{u}_\ve\right)  .$ 
First we  consider the
projection of $E\left(  \bar{u}_\ve\right)  $ onto the two dimensional space  $K=\mathrm{span}\,\{H_{\ve,i}^{\prime}\rho_{\ve,i}, i=1,2\}$, which we will denote by $E(\bar u_\ve)^\parallel$. We will also set $E(\bar u_\ve)^\perp=E(\bar u_\ve)-E(\bar u_\ve)^\parallel$. Explicitly $E(\bar u_\ve)^\perp$ is given through its pullback by the Fermi coordinates of $\Gamma_{\ve, i}$ as:
\[
{\mathbf x}_{\ve,i}^*E\left(  \bar{u}_\ve\right)^{\perp}:={\mathbf x}_{\ve,i}^*E\left(  \bar{u}_\ve\right)
-\sum\limits_{i=1}^{2}c_{\ve}(x_i)\big({\mathbf x}_{\ve,i}^*\rho_{\ve,i}H_{\ve,i}^{\prime}\big)\int_{\mathbb{R}}\big[
{\mathbf x}_{\ve,i}^*E(\bar{u}_\ve)\rho_{\ve,i}H_{\ve,i}^{\prime}\big]dy_{i},
\]
where
\[
c_{\ve}(s)=\left(  \int_{\mathbb{R}}\rho^{2}(s,t)\left(  H^{\prime}(t)\right)  ^{2}\,dt\right)
^{-1}\approx \left(\int_\R(H')^2\right)^{-1}.
\]
The main idea in what follows is that the size of the function $f_{\ve, 1}$ and its derivatives should be controlled by $E(\bar u_\ve)^\parallel$, while the size of $u-\bar u_\ve =\phi$ is controlled by $E(\bar u_\ve)^\perp$. Of course, both projections of the error $E(\bar u_\ve)$ are coupled, in the sense that the dependence on $f_{\ve, 1}$ and $\phi$ appears in both of them, but  this coupling is relatively weak.

Recall the expression of Laplace operator in the Fermi coordinate of $\Gamma_{i,\ve}$:
\begin{equation}
\Delta=\frac{1}{A_{i}}\partial_{x_{i}}^{2}+\partial_{y_{i}}^{2}+\frac{1}%
{2}\frac{\partial_{y_{i}}A_{i}}{A_{i}}\partial_{y_{i}}-\frac{1}{2}%
\frac{\partial_{x_{i}}A_{i}}{A_{i}^{2}}\partial_{x_{i}}, \label{laplacian}%
\end{equation}
where
\[
A_{i}=1+\left(  f^{\prime}_{\ve, i}\left(  x_{i}\right)  \right)  ^{2}+2y_{i}%
\frac{\left(  -1\right)  ^{i}f^{\prime\prime}_{\ve, i}\left(  x_{i}\right)  }%
{\sqrt{1+\left(  f^{\prime}_{\ve, i}\left(  x_{i}\right)  \right)  ^{2}}}+y_{i}%
^{2}\frac{\left(  f^{\prime\prime}_{\ve, i}\left(  x_{i}\right)  \right)  ^{2}}{\left(
1+\left(  f^{\prime}_{\ve, i}\left(  x_{i}\right)  \right)  ^{2}\right)  ^{2}}.
\]
Using these formulas,  we can write down the expression of the error
$E\left(  \bar{u}_\ve\right)  .$ Because of symmetry, it suffices to carry out the calculation in  the lower half
plane. Observe that,
\begin{align*}
&  -F^{\prime}\left(  H_{\ve, 2}\right)  -F^{\prime}\left(  H_{\ve, 1}-H_{\ve, 2}-1\right) \\
&  =-F^{\prime}\left(  H_{\ve, 2}\right)  -F^{\prime}\left(  H_{\ve, 1}\right)
+F^{\prime\prime}\left(  H_{\ve, 1}\right)  \left(H_{\ve, 2}+1\right)  
+{\mathcal O}\left((H_{\ve, 2}+1)^{2}\right)\\
&  =-F^{\prime}\left(  H_{\ve, 1}\right)  -\left(  F^{\prime\prime}\left(
1\right)  -F^{\prime\prime}\left(  H_{\ve, 1}\right)  \right)  \left(
H_{\ve, 2}+1\right)  +{\mathcal O}\left( ( H_{\ve, 2}+1)^2\right) .
\end{align*}
The same calculation as in formula (5.65) in \cite{MR2557944} yields that in the portion of the lower half plane where   ${\mathrm{dist}}\,(\Gamma_{\ve, i}, {\tt x})\leq (\mathrm {d}_\ve\circ\pi_{\ve, i}^x)({\tt x}) -1$, for both  $i=1,2$, we have
\begin{align}
\begin{aligned}
E\left(  \bar{u}_\ve\right)   &  =\left(  \frac{1}{2}\frac{\partial_{y_{1}}A_{1}%
}{A_{1}}-\frac{h_\ve^{\prime\prime}\left(  x_{1}\right)  }{A_{1}}  +\frac{1}{2}\frac{\partial_{x_{1}}A_{1}%
}{A_{1}^{2}}h_\ve^{\prime}\left(  x_{1}\right)  \right)  H_{\ve, 1}^{\prime}\\
& \quad -\left(  \frac{1}{2}\frac{\partial_{y_{2}}A_{2}}{A_{2}}+\frac
{h_\ve^{\prime\prime}\left(  x_{2}\right)  }{A_{2}}-\frac{1}{2}\frac
{\partial_{x_{2}}A_{2}}{A_{2}^{2}}h_\ve^{\prime}\left(  x_{2}\right)  \right)
H_{\ve,2}^{\prime}\\
& \quad +\left(  \frac{\left(  h_\ve^{\prime}\left(  x_{1}\right)  \right)  ^{2}}%
{A_{1}}H^{\prime\prime}\left(  y_{1}-h_\ve\left(  x_{1}\right)  \right)
-\frac{\left(  h_\ve^{\prime}\left(  x_{2}\right)  \right)  ^{2}}{A_{2}}%
H^{\prime\prime}\left(  y_{2}+h_\ve\left(  x_{2}\right)  \right)  \right)\\
&\quad  -\left(
F^{\prime\prime}\left(  1\right)  -F^{\prime\prime}\left(  H_{\ve, 1}\right)
\right)  \left(  H_{\ve, 2}+1\right)+{\mathcal O}\left( (H_{\ve, 2}+1)^2\right) .
\end{aligned}
\label{eu}%
\end{align}
In fact this formula generalizes in the set  ${\mathrm {dist}}\,(\Gamma_{\ve, 1}, {\tt x})<(\mathrm{d}_\ve\circ\pi_{\ve, 1}^x)({\tt x})-1$, ${\mathrm {dist}}\,(\Gamma_{\ve, 2}, {\tt x})>({\mathrm d}_\ve\circ\pi_{\ve, 2}^x)({\tt x})$,  if we set $H_{\ve, 2}=-1$. In the intermediate region where $(\mathrm {d}_\ve\circ \pi_{\ve, 1}^x)({\tt x})-1\leq {\mathrm{dist}}\,(\Gamma_{\ve, 1}, ({\tt x})<(\mathrm {d}_\ve\circ \pi_{\ve, 1}^x)({\tt x})$ we control $E(\bar u_\ve)$ by $Ce^{\,-\sqrt{2} (\mathrm {d}_\ve\circ \pi_{\ve, 1}^x)({\tt x})}$.  Finally, in the set where ${\mathrm {dist}}\,(\Gamma_{\ve, i}, {\tt x})>(\mathrm {d}_\ve\circ\pi_{\ve, i}^x)({\tt x})$ the error is $0$, since $\bar u_\ve=\pm 1$ in this set.  These is the basis of the proof of the next lemma.

\begin{lemma}
\bigskip\label{Eu}
For any  $\mu\in\left(  0,1\right)$, 
the following estimate holds%
\begin{equation}
\|E(\bar{u}_\ve)^{\perp}\|
_{{\mathcal C}^{0,\mu}_{\ve \tau}(\R^2)}=o\left(  \left\Vert f_{\ve,1}^{\prime\prime}\right\Vert _{{\mathcal C}^{0,\mu}_{\ve\tau}(\R)%
}+\left\Vert h_\ve^{\prime}\right\Vert _{{\mathcal C}^{0,\mu}_{\ve\tau}(\R)}+\left\Vert h_\ve^{\prime\prime
}\right\Vert _{{\mathcal C}^{0,\mu}_{\ve\tau}(\R)}\right)  +\mathcal{O}\left(  \|\exp\{-2\sqrt{2}|f_{\ve,1}|(1+\varDelta_\ve)\}\|_{\mathcal{C}_{\ve\tau}^{0,\mu}(\R)}\right), \label{E}%
\end{equation}
where we have denoted  $\varDelta_\ve=\mathcal{O}(\|f_{\ve, 1}'\|^2_{\mathcal{C}^0(\R)})$. 
\end{lemma}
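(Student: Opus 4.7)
The plan is to start from the explicit expansion (\ref{eu}) of $E(\bar u_\ve)$ in the Fermi coordinates attached to $\Gamma_{\ve,1}$ (and, by symmetry, to $\Gamma_{\ve,2}$), and to analyze how much of each term survives after subtracting its $L^2$-projection onto $\rho_{\ve,i} H_{\ve,i}'$. First I would partition $\R^2$ into the two near regions $\{\mathrm{dist}(\cdot,\Gamma_{\ve,i}) \le (\mathrm{d}_\ve\circ \pi_{\ve,i}^x) - 1\}$, the transition shells where formula (\ref{eu}) breaks down and where it is already known that $|E(\bar u_\ve)| \le C e^{-\sqrt 2\, \mathrm{d}_\ve}$, and the exterior in which $\bar u_\ve \equiv \pm 1$ so that $E(\bar u_\ve) \equiv 0$. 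Only the near regions give non-negligible contributions.

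The first three terms in (\ref{eu}) have the form $c(x_1,y_1)\, H_{\ve,1}'(y_1 - h_\ve(x_1))$ where $c$ is built out of $A_1$, $\partial A_1$, $f_{\ve,1}''$, $h_\ve'$, $h_\ve''$. A direct differentiation of $A_1$, combined with $\|f_{\ve,1}'\|_{\mathcal C^0}=o(1)$ and $\|h_\ve\|_{\mathcal C^0}=o(1)$, will yield $\|\partial_{y_1} c(x_1,\cdot)\|_{L^\infty}= o(1)\,(|f_{\ve,1}''(x_1)| + |h_\ve'(x_1)| + |h_\ve''(x_1)|)$ on the effective support of $H_{\ve,1}'$. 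Consequently, the orthogonal residual $[c(x_1,y_1) - \bar c(x_1)]\,H_{\ve,1}'$, where $\bar c$ is the weighted $y_1$-average produced by the projection, will be pointwise of order $|y_1 - h_\ve|\,\|\partial_{y_1}c\|_\infty\, H_{\ve,1}'$. Taking the weighted H\"older norm then delivers the first, $o(\cdots)$, term of (\ref{E}); contributions from the $\rho_{\ve,1}$ cutoff are harmless because $H_{\ve,1}'$ is itself exponentially small beyond $|y_1 - h_\ve|\gtrsim 1$.

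The quadratic term $(h_\ve')^2\, A_1^{-1}\, H''(y_1 - h_\ve)$ is already essentially orthogonal to $H_{\ve,1}'$, because $\int H'(t) H''(t)\, dt = 0$ and $\rho(x_1,\cdot)$ is even about $t = 0$; after the change of variable $t = y_1 - h_\ve$, only the $y_1$-variation of $A_1^{-1}$ contributes to the projection, so both the projection and the full term are of size $|h_\ve'|^2 = o(1)\,\|h_\ve'\|$ and are absorbed into the first error term of (\ref{E}). For the interaction piece $(F''(1) - F''(H_{\ve,1}))(H_{\ve,2}+1) + \mathcal O\bigl((H_{\ve,2}+1)^2\bigr)$, I would use in $\mathcal O_1$ the exponential bound $|H_{\ve,2}+1| \le C e^{-\sqrt 2\,|y_2|}$ together with the geometric identity (\ref{ddd 2}), $|y_1| + |y_2| \ge 2|f_{\ve,1}(x)|(1+\varDelta_\ve)$; since $H_{\ve,1}'$ localizes to $|y_1| = O(1)$, this yields $|H_{\ve,2}+1| = \mathcal O\bigl(e^{-2\sqrt 2\,|f_{\ve,1}|(1+\varDelta_\ve)}\bigr)$, which is exactly the $\mathcal O$-term in (\ref{E}); the projection of this piece onto $\rho_{\ve,1} H_{\ve,1}'$ has the same exponentially small size and is absorbed as well.

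The transition-shell contribution, bounded pointwise by $C e^{-\sqrt 2\, \mathrm{d}_\ve(x)}$, is subleading thanks to the lower bound on $\mathrm{d}_\ve(x)$ coming from (\ref{def dveee}), which forces it to be dominated by the previously estimated pieces. The main technical difficulty I anticipate is the careful propagation of the $\mathcal C^{0,\mu}_{\ve\tau}$ H\"older regularity through the differentiation of the ratios $\partial_{y_i} A_i/A_i$ and $\partial_{x_i} A_i/A_i^{2}$, and through the change of variables that compares the $y_1$-support of $H_{\ve,1}'$ with the Euclidean coordinates used in the weight $(\cosh x)^{\ve\tau}$; this should be manageable because the Fermi diffeomorphism is close to the identity at the unit scale of the support of $H_{\ve,1}'$, so via (\ref{ddd 1}) the weight seen in Fermi coordinates differs from the Euclidean one only by a factor $1 + o(1)$.
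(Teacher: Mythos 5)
Your proposal is correct and follows essentially the same route as the paper: decompose into near/transition/exterior regions, work from the explicit expansion (\ref{eu}) in the near region, exploit that the coefficients multiplying $H'_{\ve,i}$ are nearly $y_i$-independent so that the orthogonal projection kills their leading part, absorb the quadratic $(h'_\ve)^2 H''$ piece into the $o(\cdot)$ term, and control the interaction piece via the lower bound $|y_1|+|y_2|\geq 2|f_{\ve,1}|(1+\varDelta_\ve)$ together with the $(\cosh x)^{\ve\tau}$-weight comparison between Fermi and Euclidean coordinates. The only cosmetic difference is that you present the cancellation through a mean-value-theorem bound on $\partial_{y_1}c$, whereas the paper writes out $T_1$ explicitly and observes the cancellation between $\frac{\partial_{y_1}A_1}{A_1}H'_{\ve,1}$ and its projection term-by-term; the underlying mechanism (that $c$ and $A_1$ are $y_1$-constant up to $o(1) f''_{\ve,1}$ on the effective support of $H'_{\ve,1}$) is identical.
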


\begin{proof}
\red{First we note that, by (\ref{ddd 1}) whenever we control $\mathrm{d}_\ve(x_i)$ we also control $\mathrm{d}_\ve(x)$ and then from the definition of the function $\mathrm{d}_\ve$ and (\ref{est dve 1}) it follows 
\[
\|e^{-\sqrt{2}\mathrm{d}_{\ve}}\|_{\mathcal{C}_{\ve\tau}^0(\R)}=o\left(  \left\Vert f_{\ve,1}^{\prime\prime}\right\Vert _{\mathcal {C}^{0,\mu}_{\ve\tau}(\R)}\right)  .
\]
Thus, whenever a term the error $E(\bar u_\ve)({\tt x})$ can be controlled by 
$e^{-\sqrt{2}\mathrm{d}_{\ve}(x_i)},$ then the $\mathcal{C}^0_{\ve\tau}(\R)$ norm of such term  can be 
controlled as well by $o\left(  \left\Vert f_{\ve,1}^{\prime\prime}\right\Vert _{C^{0,\mu}_{\ve\tau}(\R)%
}\right)  .$ }Therefore to prove $\left(  \ref{E}\right)  ,$ it suffices to
check the expression $\left(  \ref{eu}\right)  $ for  $E\left(  \bar{u}_\ve\right)$, which applies whenever at least for one $i$ it holds ${\mathrm {dist}}\,(\Gamma_{\ve, i}, {\tt x})<\mathrm{d}_\ve(x_i)-1$, as we have pointed out above. This means that we only need to consider the subset of the lower half plane were $|y_i|=\mathrm{dist}\,(\Gamma_{\ve, i}, {\tt x})\leq \mathrm {d}_\ve(x_i)-1$ for at least one $i$. Thus we may focus on studying  the formula (\ref{eu}).

Projecting the $E(\bar u_\ve)$ on $K$, and using formula (\ref{eu}), we get for instance the following term in  $E\left(  \bar{u}_\ve
\right)  ^{\perp}$:
\[
T_{1}:=\frac{\partial_{y_{1}}A_{1}}{A_{1}}H_{\ve, 1}^{\prime}-c_{\ve}\rho_{\ve, 1}%
H_{\ve, 1}^{\prime}\int_{\mathbb{R}}\frac{\partial_{y_{1}}A_{1}}{A_{1}}\rho
_{\ve,1}\left(  H_{\ve, 1}^{\prime}\right)  ^{2}dy_{1}.
\]

Recall that
\[
\frac{\partial_{y_{1}}A_{1}}{A_{1}}=-2\frac{f_{\ve,1}^{\prime\prime}\left(
x_{1}\right)  }{A_{1}\sqrt{1+\left(  f^{\prime}_{\ve, 1}\left(  x_{1}\right)  \right)
^{2}}}+2\frac{y_{1}\left(  f^{\prime\prime}_{\ve, 1}\left(  x_{1}\right)  \right)
^{2}}{A_{1}\left(  1+\left(  f^{\prime}_{\ve, 1}\left(  x_{1}\right)  \right)
^{2}\right)  ^{2}}.
\]
Substituting this into the expression of $T_{1}$ results in
\begin{align*}
T_{1}  &  =\frac{\partial_{y_{1}}A_{1}}{A_{1}}H_{\ve, 1}^{\prime}+\frac{2c_{\ve}%
\rho_{\ve, 1}H_{\ve, 1}^{\prime}f_{\ve, 1}^{\prime\prime}\left(  x_{1}\right)  }{\sqrt{1+\left(
f_{\ve, 1}^{\prime}\left(  x_{1}\right)  \right)  ^{2}}}\int_{\mathbb{R}}\frac{\rho
_{\ve, 1}\left(  H_{\ve, 1}^{\prime}\right)  ^{2}}{A_{1}}dy_{1}\\
&  -\frac{2c_{\ve}\rho_{\ve, 1}H_{\ve, 1}^{\prime}\left(  f_{\ve, 1}^{\prime\prime}\left(
x_{1}\right)  \right)  ^{2}}{\left(  1+\left(  f_{\ve, 1}^{\prime}\left(  x_{1}\right)
\right)  ^{2}\right)  ^{2}}\int_{\mathbb{R}}\frac{y_{1}\rho_{\ve, 1}\left(
H_{\ve, 1}^{\prime}\right)  ^{2}}{A_{1}}dy_{1}.
\end{align*}
\red{Note that although $y_{1}$ appears in $\frac{\partial_{y_1}A_{1}}{A_1}$, it is always multiplied by
$f^{\prime\prime}_{\ve, 1}\left(  x_{1}\right)$ and an exponentially decaying function, namely $H'_{\ve, 1}$,  which gives  a lower order  term since
\begin{equation}
\label{yonef}
H'_{\ve, 1}|y_1||f_{\ve, 1}''(x_1)|\leq C\|f_{\ve,1}''\|_{\mathcal{C}^0(\R)}=o(1).
\end{equation}}
Another observation we make is that when we estimate $\mathcal{C}^0_{\ve\tau}(\R)$ norms we need to take into account the relation between the Fermi variables $(x_1, y_1)$ and the euclidean coordinates $(x,y)$ of a point ${\tt x}\in \mathcal{O}_1$. To see this let us consider  a typical term that appears in $T_1$:
\[
|(\cosh x)^{\ve\tau} f''_{\ve, 1}(x_1)|\leq C e^{\,\ve\tau|x_1-x|}\|f''_{\ve, 1}\|_{\mathcal{C}^0_{\ve\tau}{(\R)}}\leq C\exp\{\ve\tau|y_1|\mathcal{O}(\|f'_{\ve, 1}\|_{\mathcal{C}^{0}(\R)})\}\|f''_{\ve, 1}\|_{\mathcal{C}^0_{\ve\tau}{(\R)}}.
\]
Any term of this form is additionally multiplied by $o(1)H_{\ve, 1}'$ or $o(1)H''_{\ve, 1}$ thus yielding a term of the order 
$o(\|f''_{\ve, 1}\|_{\mathcal{C}^0_{\ve\tau}{\R}})$. 


Finally, it is important to notice that although it appears at first that $T_1$ carries a term of order $\mathcal{O}(\|f''_{\ve, 1}\|_{\mathcal{C}^0_{\ve\tau}{\R}})$, there is a cancellation between the first and the second term (the one containing the integral) in $T_1$. In estimating this term it is important to use the properties of the cut off function $\rho_{\ve, 1}$.

Now, using the fact that $f_{\ve,1}^{\prime}$
and $f_{\ve, 1}^{\prime\prime}$ are of order $o\left(  1\right)$, as $\ve \to 0$,  and the definition
of the cutoff function $\rho_{\ve, 1},$ we conclude
\[
\left\Vert T_{1}\right\Vert _{{\mathcal C}^{0,\mu}(\R^2)}=o\left(  \left\Vert f_{\ve, 1}^{\prime\prime
}\right\Vert _{{\mathcal C}^{0,\mu}(\R)}\right)  .
\]
Similar estimates hold for the terms involving $h_\ve^{\prime\prime}\left(
x_{1}\right)  .$ Regarding to the terms involving $h_\ve^{\prime}\left(
x_{1}\right)  ,h_\ve^{\prime}\left(  x_{2}\right)  ,h_\ve^{\prime\prime}\left(
x_{2}\right)  $, we note that they are all multiplied by a small order term. Furthermore,
the norms of $\left(  H_{\ve, 2}+1\right)  H_{\ve, 1}^{\prime}$ and ${\mathcal O}(  \left(
H_{\ve, 2}+1\right)  ^{2})  $ are controlled by $Ce^{\,-\sqrt{2}(|y_1|+|y_2|)}$. To estimate terms of this from we use the expressions for the Fermi coordinates of $\Gamma_{\ve, i}$ to arrive at the following lower bound:
\[
|y_1|+|y_2|\geq 2(1+\varDelta_\ve)|f_{\ve, 1}(x)|.
\]
This ends  the proof.
\end{proof}

Observe that there are terms involving $h_\ve$ which appear in the right hand side of
$\left(  \ref{E}\right)  .$ This somewhat complicates the situation. However,
since the Fermi coordinates are defined using the nodal line, we have the following

\begin{lemma}
It holds
\label{hf1}%
\begin{equation}
\left\Vert h_\ve\right\Vert _{C^{2,\mu}_{\ve\tau}\left(  \mathbb{R}\right)  }\leq
C\left\Vert \phi\right\Vert _{C^{2,\mu}_{\ve\tau}\left(  \mathbb{R}^{2}\right)
}+C\|\exp\{-2\sqrt{2}|f_{\ve,1}|(1+\varDelta_\ve)\}\|_{\mathcal{C}_{\ve\tau}^{0,\mu}(\R)}
\label{hf}
\end{equation}

\end{lemma}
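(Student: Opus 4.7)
The approach is to exploit the fact that $\Gamma_{\ve,1}$ is by definition the nodal set of $u$ in the lower half plane, which combined with the decomposition $u = \bar{u}_\ve + \phi$ gives the pointwise identity $\bar{u}_\ve|_{\Gamma_{\ve,1}} = -\phi|_{\Gamma_{\ve,1}}$. I would first compute $\bar{u}_\ve$ on $\Gamma_{\ve,1}$ in the Fermi coordinates of that curve, i.e.\ at $y_1 = 0$. Inside $\mathcal{O}_1$ we have $\eta_1 \equiv 1$, so the contribution from $H_{\ve,1}$ is exactly $H(-h_\ve(x_1)) = -H(h_\ve(x_1))$. The remaining contribution involving $H_{\ve,2}$ is controlled by the exponential decay $|H(t) \mp 1| = \mathcal{O}(e^{-\alpha_0 |t|})$ evaluated at the distance between the Euclidean point $(x_1,f_{\ve,1}(x_1))$ and the upper nodal line $\Gamma_{\ve,2}$, which by (\ref{ddd 2}) equals $2|f_{\ve,1}(x_1)|(1+\varDelta_\ve)$. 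With $\alpha_0 = \sqrt{2}$ this produces the implicit identity
\[
H(h_\ve(x_1)) = \phi\bigl(\mathbf{x}_{\ve,1}(x_1,0)\bigr) + \mathcal{O}\bigl(e^{-2\sqrt{2}|f_{\ve,1}(x_1)|(1+\varDelta_\ve)}\bigr).
\]

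The second step is to invert $H$ locally near the origin. Since $H(0) = 0$ and $H'(0) \neq 0$, and Corollary \ref{corollary orto 1} gives the a priori smallness $\|h_\ve\|_{\mathcal{C}^0(\R)} = o(1)$, a Taylor expansion $H(s) = H'(0)s + \mathcal{O}(s^2)$ can be inverted and the quadratic term absorbed into the left hand side. This yields the pointwise bound $|h_\ve(x_1)| \leq C\bigl|\phi(\mathbf{x}_{\ve,1}(x_1,0))\bigr| + Ce^{-2\sqrt{2}|f_{\ve,1}(x_1)|(1+\varDelta_\ve)}$. On $\Gamma_{\ve,1}$, the Fermi coordinate $x_1$ coincides with the Euclidean $x$-coordinate (by (\ref{fermi 50}) at $y_1 = 0$), so the weights $(\cosh x_1)^{\ve\tau}$ and $(\cosh x)^{\ve\tau}$ agree; consequently $\|\phi(\mathbf{x}_{\ve,1}(\cdot,0))\|_{\mathcal{C}^0_{\ve\tau}(\R)} \leq \|\phi\|_{\mathcal{C}^0_{\ve\tau}(\R^2)}$, which gives the $\mathcal{C}^0_{\ve\tau}$ part of (\ref{hf}). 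To upgrade to the full $\mathcal{C}^{2,\mu}_{\ve\tau}$ estimate, I would differentiate the implicit identity in $x_1$. The chain rule applied to $\phi \circ \mathbf{x}_{\ve,1}|_{y_1=0}$ produces tangential derivatives of $\phi$ along $\Gamma_{\ve,1}$ with coefficients involving $1,f_{\ve,1}',f_{\ve,1}''$, all bounded in terms of $\|\phi\|_{\mathcal{C}^{k,\mu}(\R^2)}$ multiplied by $o(1)$ factors; derivatives of the exponential term produce extra factors of $f_{\ve,1}'$ or $(f_{\ve,1}')^2$ times the exponential itself, which remains controlled in the $\mathcal{C}^{0,\mu}_{\ve\tau}(\R)$ norm used on the right hand side. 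A standard Hölder estimate then closes the argument at the $\mathcal{C}^{2,\mu}_{\ve\tau}$ level.

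The hard part is essentially bookkeeping, but there are three slightly delicate points worth highlighting. First, one must verify that $H'(h_\ve)$ stays bounded uniformly away from zero throughout the differentiation, which is ensured by the a priori $\mathcal{C}^0$ smallness of $h_\ve$ from Corollary \ref{corollary orto 1} combined with the a priori bound $\|\phi\|_{\mathcal{C}^{2,\mu}_{\ve\tau_0}(\R^2)} < \infty$ noted in the proof of Proposition \ref{estim hat phi}. Second, when differentiating the exponential error term, the factors $f_{\ve,1}', f_{\ve,1}''$ that appear must be absorbed into the right hand side of (\ref{hf}); this works precisely because these derivatives are of order $\mathcal{O}(\ve)$ and therefore small. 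Third, one must identify once and for all, using the explicit Fermi formulas (\ref{fermi 50}) together with (\ref{ddd 2}), that the distance from $(x_1,f_{\ve,1}(x_1))$ to $\Gamma_{\ve,2}$ is exactly $2|f_{\ve,1}(x_1)|(1+\varDelta_\ve)$, which is what produces the specific exponent $-2\sqrt{2}|f_{\ve,1}|(1+\varDelta_\ve)$ displayed in the statement.
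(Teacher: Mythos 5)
Your proposal follows exactly the same route as the paper: evaluate the decomposition $u = \bar u_\ve + \phi$ on $\Gamma_{\ve,1}$ (i.e.\ at $y_1=0$) in the Fermi coordinates, use $u=0$ there to isolate $H(-h_\ve(x_1))$ against the contributions from $H_{\ve,2}+1$ and $\phi$, invert $H$ near the origin using the a priori smallness of $h_\ve$, and then differentiate the resulting implicit identity in $x_1$ to climb to the $\mathcal{C}^{2,\mu}_{\ve\tau}$ estimate. Your three "delicate points" are precisely the ingredients the paper relies on implicitly, so the argument matches in substance as well as structure.
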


\begin{proof}
We first recall that in the set  $\mathrm{dist}\, (\Gamma_{\ve, 1}, {\tt x})<\mathrm{d}_\ve({\tt x})-1$,
\begin{equation}
({\mathbf x}_{\ve, 1}^*u)\left(
x_{1},y_{1}\right) =H\left(  y_{1}-h_\ve\left(  x_{1}\right)  \right)  -({\mathbf x}_{\ve, 1}^*H_{\ve, 2})\left(
x_{1},y_{1}\right)  -1+({\mathbf x}_{\ve, 1}^*\phi)\left(  x_{1},y_{1}\right). \label{hfu0}%
\end{equation}
Letting $y_{1}=0$ in the above identity and using  that $x_1=x$,  we have:
\[
\left\vert ({\mathbf x}_{\ve, 1}^*H_{\ve, 2})\left(  x_{1},y_{1}\right)  +1\right\vert \leq\begin{cases}
C\exp\big\{-2\sqrt{2}(|f_{\ve,1}(x)|(1+\varDelta_\ve)\big\}, \quad (x,f_{\ve, 1}(x))\in \mathcal{O}_2,\\
0, \quad (x,f_{\ve, 1}(x))\notin \mathcal{O}_2.
\end{cases}
\]
Then from  $({\mathbf x}_{\ve, 1}^*u)\left(
x_{1},0\right)=0$ one gets%
\[
\|h_\ve\|_{\mathcal{C}_{\ve\tau}^{0}(\R)} \leq C\|\phi\|
_{{\mathcal C}^{0}_{\ve\tau}(\R^2)}+C\|\exp\{-2\sqrt{2}|f_{\ve,1}|(1+\varDelta_\ve)\}\|_{\mathcal{C}_{\ve\tau}^{0,\mu}(\R)}.
\]
This gives us a ${\mathcal C}^{0}$ estimate. To estimate  the ${\mathcal C}^{1}$ norm of $h_\ve$, we differentiate the relation 
$\left(  \ref{hfu0}\right)  $ with respect to $x_{1}$ and let $y_{1}=0$ in
the resulting equation. Then we find that%
\begin{equation}
-H^{\prime}\left(  -h_\ve\left(  x_{1}\right)  \right)  h_\ve^{\prime}\left(
x_{1}\right)  -\frac{\partial }{\partial x_{1}}({\mathbf x}_{\ve, 1}^* H_{\ve, 2})+\frac{\partial}{\partial x_{1}}({\mathbf{x}}_{\ve, 1}^*\phi)=0, \label{d1}%
\end{equation}
from which the ${\mathcal C}^1_{\ve\tau}$ estimate follows.
Similarly, we could differentiate the equation $\left(  \ref{hfu0}\right)  $
twice with respect to $x_{1}$ and let $y_{1}=0$ to estimate $h_{\ve}''$.

Corresponding estimates for the Holder norm are also straightforward. 
\end{proof}

To proceed, we need the following {\it a priori} estimate:
\red{
\begin{proposition}
\label{apriori}
Suppose $\varphi$ is a solution of the following equation:%
\[
-\Delta\varphi+F^{\prime\prime}\left(  \bar{u}_\ve\right)  \varphi=f+\sum_{i=1,2}\kappa_{i,\ve}\rho_{\ve,i}H'_{\ve, i},\quad \mbox{in}\ \R^2,
\]
with some given functions $f\in \mathcal{C}^{0,\mu}_{\ve\tau}(\R^2)$ and $\kappa_{\ve, i}\in\mathcal{C}^{0,\mu}_{\ve\tau}(\R)$. Assume furthermore that the function $\varphi$ satisfies the  orthogonality condition:
\begin{equation}
\int_{\mathbb{R}}\big[\mathbf{x}_{\ve, i}^{\ast}\varphi\rho_{\ve, i}H_{\ve, i}^{\prime}\big]
dy_{i}=0, \quad i=1,2.
\label{ort xxx}
\end{equation}
Then it holds
\[
\left\Vert \varphi\right\Vert _{C^{2,\mu}_{\ve\tau}(\R^2)}\leq C\left\Vert f\right\Vert
_{C^{0,\mu}_{\ve\tau}(\R^2)},\quad \|\kappa_{\ve, i}\|_{C^{0,\mu}_{\ve\tau}(\R)}\leq C\left\Vert f\right\Vert
_{C^{0,\mu}_{\ve\tau}(\R^2)},
\]
provided $\varepsilon$ is small enough.
\end{proposition}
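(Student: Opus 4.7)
The plan is a standard contradiction-plus-blow-up argument, familiar from Lyapunov--Schmidt reductions. First I would reduce the problem to a pointwise (weighted $C^0$) estimate on $\varphi$ alone: once $\|\varphi\|_{C^0_{\ve\tau}(\R^2)}\leq C\|f\|_{C^{0,\mu}_{\ve\tau}(\R^2)}$ is established, standard interior Schauder estimates on unit balls, combined with the elementary observation that the weight $(\cosh x)^{\ve\tau}$ is comparable to a constant on each such ball uniformly in small $\ve$, upgrade this to the full $C^{2,\mu}_{\ve\tau}$ estimate. The bound on the multipliers $\kappa_{\ve,i}$ is extracted by testing the equation with $\mathbf{x}_{\ve,i}^{\ast}(\rho_{\ve,i}H_{\ve,i}^{\prime})$ and integrating in $y_i$: the orthogonality (\ref{ort xxx}) lets one transfer $L_{\bar u_\ve}$ onto the test function, which is an approximate element of $\ker L_{\bar u_\ve}$ with defect of order $\ve$, and yields
\[
\|\kappa_{\ve,i}\|_{C^{0,\mu}_{\ve\tau}(\R)}\leq C\|f\|_{C^{0,\mu}_{\ve\tau}(\R^2)}+o(1)\|\varphi\|_{C^{0,\mu}_{\ve\tau}(\R^2)},
\]
the $o(1)$ term being absorbed once $\varphi$ is controlled.

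Next I would argue by contradiction. Assume there exist sequences $\ve_n\downarrow 0$, $\varphi_n$, $f_n$, $\kappa_{\ve_n,i}$ satisfying the equation and the orthogonality, normalized so that $\|\varphi_n\|_{C^0_{\ve_n\tau}(\R^2)}=1$ while $\|f_n\|_{C^{0,\mu}_{\ve_n\tau}(\R^2)}\to 0$. Pick ${\tt x}_n=(x_n,y_n)$ with $(\cosh x_n)^{\ve_n\tau}|\varphi_n({\tt x}_n)|\geq 1/2$ and set $\tilde\varphi_n({\tt y}):=(\cosh x_n)^{\ve_n\tau}\varphi_n({\tt x}_n+{\tt y})$. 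Since $(\cosh(x_n+y_1)/\cosh x_n)^{\ve_n\tau}\to 1$ locally uniformly in ${\tt y}=(y_1,y_2)$ as $\ve_n\to 0$, the sequence $\{\tilde\varphi_n\}$ is bounded in $L^\infty_{\rm loc}$ with $|\tilde\varphi_n(0)|\geq 1/2$, so elliptic regularity produces a subsequential $C^2_{\rm loc}$ limit $\Phi$.

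The contradiction is extracted by a case analysis on the location of ${\tt x}_n$ relative to $\Gamma_{\ve_n,1}\cup\Gamma_{\ve_n,2}$. If $\dist({\tt x}_n,\Gamma_{\ve_n,1}\cup\Gamma_{\ve_n,2})\to\infty$, then $F''(\bar u_{\ve_n}({\tt x}_n+\cdot))\to F''(\pm 1)=\alpha_0^2$ locally and the support of the multiplier term recedes to infinity; the limit satisfies $-\Delta\Phi+\alpha_0^2\Phi=0$ in $\R^2$ with $\Phi\in L^\infty$, hence $\Phi\equiv 0$ by a Liouville-type maximum principle, contradicting $|\Phi(0)|\geq 1/2$. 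If instead the Fermi distance from ${\tt x}_n$ to, say, $\Gamma_{\ve_n,1}$ remains bounded, translation in Fermi coordinates gives $\bar u_{\ve_n}({\tt x}_n+\cdot)\to H(y_\ast-c_0)$ after a suitable rigid motion, so $\Phi$ solves $-\Delta\Phi+F''(H(y_\ast-c_0))\Phi=0$ on $\R^2$, and the orthogonality survives the limit as $\int_\R\Phi(x_\ast,y_\ast)H'(y_\ast-c_0)\,dy_\ast=0$ for every $x_\ast$. A Fourier decomposition in $x_\ast$ combined with the nondegeneracy of the heteroclinic (the kernel of $L_0$ on $\R$ is spanned by $H'$, with spectral gap $\mu_1>0$) then forces $\Phi\equiv 0$.

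The delicate point is the second subcase: one has to verify that the orthogonality (\ref{ort xxx}) genuinely passes to the limit (the domain $\R$ in $y_i$ persists because $\mathrm{d}_\ve\to\infty$, so $\rho_{\ve,i}\to 1$ locally in the rescaled picture) and then rule out nonzero bounded solutions of the limit operator in the $H'$-orthogonal complement. The latter is a spectral argument based on the description of $L_0$ recalled in Section \ref{summary moduli}: for each tangential frequency $\xi$, the transverse profile must be $H'$-orthogonal, and since the next eigenvalue $\mu_1>0$ lies above $0$, no bounded tangentially-oscillating mode survives. Once $\Phi\equiv 0$ is obtained in both subcases, the desired uniform a priori estimate follows, and the multiplier bound on $\kappa_{\ve,i}$ is closed up by feeding the $C^{0,\mu}_{\ve\tau}$ bound on $\varphi$ back into the inequality derived in the first step.
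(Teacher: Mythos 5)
Your proposal follows essentially the same route as the paper's (sketched) proof: a contradiction/blow-up argument based on the neutral stability of the heteroclinic for the $C^0_{\ve\tau}$ bound on $\varphi$, combined with projecting the equation onto $\rho_{\ve,i}H'_{\ve,i}$ (exploiting the orthogonality condition and the small defect of $\rho_{\ve,i}H'_{\ve,i}$ as an approximate kernel element) to extract the bound on $\kappa_{\ve,i}$, and finally Schauder estimates to upgrade to $C^{2,\mu}_{\ve\tau}$. The dichotomy in the blow-up (Fermi distance bounded vs.\ unbounded), the Liouville argument for the far case, and the spectral-gap argument in the $H'$-orthogonal complement for the near case are exactly the ingredients the paper defers to \cite{dkp_dg} and \cite{MR2557944}.
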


}
The proof is by contradiction and it is  essentially the same as that of Proposition  5.1 in \cite{MR2557944} and consists of the following steps:  first  an {\it a priori} estimate is proven for a solution of  the following problem:
\[
-\Delta \varphi+F''(\bar u_\ve)=f, \quad\mbox{in}\ \R^2, 
\]
where $\varphi$ satisfies the orthogonality condition (\ref{ort xxx}). In fact, using the fact that the heteroclinic solution in $\R$ is neutrally stable one can prove that $\varphi$ satisfies an estimate of the form claimed in the Proposition. This type of argument can be found for example in \cite{dkp_dg}. 

Second, we project the equation on the functions of the form $\rho_{\ve,j}H'_{\ve, j}$, $j=1,2$ and arrive at the following expressions
\[
\int_\R {\tt x}^*_{\ve, j}\big(\rho_{\ve,j}H'_{\ve, j}[-\Delta\varphi+F^{\prime\prime}\left(  \bar{u}_\ve\right)  \varphi]\big)(x_j, y_j)\,dy_j-\int_\R{\tt x}^*_{\ve, j}\big(\rho_{\ve,j}H'_{\ve, j}f\big)(x_j,y_j)\,dy_j=\sum_{i=1,2}\kappa_{i,\ve}\int_R{\tt x}^*_{\ve, j}\big(\rho_{\ve,i}H'_{\ve, i}\big)^2(x_j,y_j)\,dy_j.
\]
After an integration by parts and some calculations we can prove using the above identity that the $\mathcal{C}^{0,\mu}_{\ve\tau}(\R)$ norm of the functions $\kappa_{i,\ve}$  can be controlled by $o(1)\|\varphi\|_{\mathcal{C}^{0,\mu}_{\ve\tau}(\R^2)}+C\|f\|_{\mathcal{C}^{0,\mu}_{\ve\tau}(\R^2)}$. From this and the first step the assertion of the Proposition follows.
We omit the details.

With this result at hand, now we could prove

\begin{lemma}
\label{fif}
Let $\phi=u-\bar u_\ve$ be the solution of (\ref{ubar}). The following estimate is true:%
\begin{equation}
\left\Vert \phi\right\Vert _{{\mathcal C}^{2,\mu}_{\ve\tau}(\R^2)}\leq o\big(\left\Vert
f_{\ve, 1}^{\prime\prime}\right\Vert _{{\mathcal C}^{0,\mu}_{\ve\tau}(\R)}\big)+C\|\exp\{-2\sqrt{2}|f_{\ve,1}|(1+\varDelta_\ve)\}\|_{\mathcal{C}_{\ve\tau}^{0,\mu}(\R)}. \label{f1}%
\end{equation}
\end{lemma}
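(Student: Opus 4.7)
The plan is to apply the a priori estimate of Proposition \ref{apriori} to the equation (\ref{ubar}) satisfied by $\phi = u - \bar u_\ve$, then use Lemmas \ref{Eu} and \ref{hf1} to close the estimate. By construction of the modulation function $h_\ve$, the orthogonality condition (\ref{orto h}) holds for $\phi$, so the hypothesis of Proposition \ref{apriori} is met. Rewriting (\ref{ubar}) as
\[
-\Delta \phi + F''(\bar u_\ve)\phi = E(\bar u_\ve)^{\perp} - P(\phi) + E(\bar u_\ve)^{\parallel},
\]
the parallel component $E(\bar u_\ve)^{\parallel}$ is precisely of the form $\sum_{i}\kappa_{\ve,i}\rho_{\ve,i}H'_{\ve,i}$ absorbed into the right-hand side of the linear problem in Proposition \ref{apriori}. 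Thus Proposition \ref{apriori} yields
\[
\|\phi\|_{\mathcal{C}^{2,\mu}_{\ve\tau}(\R^2)} \leq C\,\|E(\bar u_\ve)^{\perp}\|_{\mathcal{C}^{0,\mu}_{\ve\tau}(\R^2)} + C\,\|P(\phi)\|_{\mathcal{C}^{0,\mu}_{\ve\tau}(\R^2)}.
\]

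Next I would insert the bound from Lemma \ref{Eu}:
\[
\|E(\bar u_\ve)^{\perp}\|_{\mathcal{C}^{0,\mu}_{\ve\tau}} = o\bigl(\|f_{\ve,1}''\|_{\mathcal{C}^{0,\mu}_{\ve\tau}} + \|h'_\ve\|_{\mathcal{C}^{0,\mu}_{\ve\tau}} + \|h''_\ve\|_{\mathcal{C}^{0,\mu}_{\ve\tau}}\bigr) + \mathcal{O}\bigl(\|e^{-2\sqrt 2 |f_{\ve,1}|(1+\varDelta_\ve)}\|_{\mathcal{C}^{0,\mu}_{\ve\tau}}\bigr),
\]
and then use Lemma \ref{hf1} to eliminate the derivatives of $h_\ve$:
\[
\|h'_\ve\|_{\mathcal{C}^{0,\mu}_{\ve\tau}} + \|h''_\ve\|_{\mathcal{C}^{0,\mu}_{\ve\tau}} \leq \|h_\ve\|_{\mathcal{C}^{2,\mu}_{\ve\tau}} \leq C\|\phi\|_{\mathcal{C}^{2,\mu}_{\ve\tau}} + C\|e^{-2\sqrt 2|f_{\ve,1}|(1+\varDelta_\ve)}\|_{\mathcal{C}^{0,\mu}_{\ve\tau}}.
\]
Since the coefficient in Lemma \ref{Eu} is $o(1)$ as $\ve\to 0$, the contribution $o(\|\phi\|_{\mathcal{C}^{2,\mu}_{\ve\tau}})$ can be absorbed into the left-hand side.

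The remaining step is to control the nonlinear term $P(\phi) = F'(\bar u_\ve + \phi) - F'(\bar u_\ve) - F''(\bar u_\ve)\phi$. Since $\bar u_\ve$ is uniformly bounded and, by the remark following Proposition \ref{estim hat phi}, $\|\phi\|_{\mathcal{C}^0(\R^2)} = o(1)$, one has the pointwise bound $|P(\phi)| \leq C|\phi|^2$ together with a matching Hölder estimate, so
\[
\|P(\phi)\|_{\mathcal{C}^{0,\mu}_{\ve\tau}(\R^2)} \leq C\,\|\phi\|_{\mathcal{C}^0(\R^2)}\,\|\phi\|_{\mathcal{C}^{0,\mu}_{\ve\tau}(\R^2)} = o(\|\phi\|_{\mathcal{C}^{2,\mu}_{\ve\tau}(\R^2)}).
\]
This term too is absorbed into the left-hand side, leaving
\[
\|\phi\|_{\mathcal{C}^{2,\mu}_{\ve\tau}(\R^2)} \leq o\bigl(\|f_{\ve,1}''\|_{\mathcal{C}^{0,\mu}_{\ve\tau}(\R)}\bigr) + C\,\|e^{-2\sqrt 2|f_{\ve,1}|(1+\varDelta_\ve)}\|_{\mathcal{C}^{0,\mu}_{\ve\tau}(\R)},
\]
as claimed.

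The main technical obstacle is the bookkeeping in the first step: one must be sure that the parallel part $E(\bar u_\ve)^{\parallel}$ has exactly the structure required by Proposition \ref{apriori} (i.e.\ a linear combination $\sum \kappa_{\ve,i}\rho_{\ve,i}H'_{\ve,i}$ with coefficients in $\mathcal{C}^{0,\mu}_{\ve\tau}(\R)$), and that the orthogonality (\ref{orto h}) on $\phi$ indeed matches the orthogonality (\ref{ort xxx}) used in Proposition \ref{apriori}. Once this identification is carried out cleanly, the rest is a standard closed-loop argument exploiting the $o(1)$ factors provided by Lemma \ref{Eu} and by the smallness of $\phi$ in $\mathcal{C}^0$.
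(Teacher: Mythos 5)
Your proposal is correct and follows essentially the same route as the paper: apply Proposition \ref{apriori} to the split equation, insert Lemma \ref{Eu}, use Lemma \ref{hf1} to remove the $h_\ve$-derivative terms, and absorb the $o(\|\phi\|)$ contributions (including the quadratic $P(\phi)$) into the left side. In fact your write-up is somewhat more explicit than the paper's terse proof, which does not spell out the role of Lemma \ref{hf1} in closing the loop.
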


\begin{proof}
We will use Proposition \ref{fif}. 
Thus we write:
\[
-\Delta\phi+F''(\bar u_\ve)\phi=E(\bar u_\ve)^\perp+P(\phi)+E(\bar u_\ve)^\parallel.
\]
Because of Proposition \ref{apriori} to control the size of the function $\phi$ it suffices to control the size of $E(\bar u_\ve)^\perp$ (which we already do by Lemma \ref{Eu}) and the size of $P(\phi)$.

Next 
we observe that $P\left(
\phi\right)  $ is essentially quadratic in $\phi$, and  therefore it is not difficult to
show
\[
\left\Vert  P\left(  \phi\right) 
\right\Vert _{{\mathcal C}^{0,\mu}_{\ve\tau}(\R^2)}=o\left(  \left\Vert \phi\right\Vert
_{{\mathcal C}^{2,\mu}_{\ve\tau}(\R^2)}\right)  .
\]
Collecting all these estimates, we
conclude (\ref{f1}).
%

\end{proof}

Roughly speaking, the above result indicates that we can control $\phi$ by $\|e^{\,-2\sqrt{2}|f_{1,\ve}|}\|_{\mathcal{C}^0_{\ve\tau}(\R)}$
and the second derivative of $f_{\ve, 1}$. However, this is not quite enough for our
later purpose. Remark that for the solution constructed in \cite{MR2557944}, the
corresponding error is roughly speaking controlled by $C\varepsilon^{2}.$ On the other hand, intuitively, $\|f_{\ve, 1}-\ve |x|\|_{\mathcal{C}^0(\R)}\sim \log\frac{1}{\ve}$. This indeed would be true if $f_{\ve,1}$, $f_{\ve, 2}$ were solutions of the Toda system with the asymptotic slope $\ve$ at $\pm \infty$. For now we will show:

\begin{lemma}\label{fi}
The following estimate holds:
\begin{align*}
\left\Vert \phi\right\Vert _{{\mathcal C}^{2,\mu}_{\ve\tau}(\R^2)}\leq C\|\exp\{-2\sqrt{2}|f_{\ve,1}|(1+\varDelta_\ve)\}\|_{\mathcal{C}_{\ve\tau}^{0,\mu}(\R)}.
\end{align*}
\end{lemma}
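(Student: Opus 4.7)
The estimate in Lemma \ref{fif} already expresses $\|\phi\|_{\mathcal{C}^{2,\mu}_{\ve\tau}(\R^2)}$ as a small multiple of $\|f''_{\ve,1}\|_{\mathcal{C}^{0,\mu}_{\ve\tau}(\R)}$ plus the desired exponential quantity on the right-hand side of Lemma \ref{fi}. The plan is to complement this with an independent bound of the form
\[
\|f''_{\ve,1}\|_{\mathcal{C}^{0,\mu}_{\ve\tau}(\R)} \leq C\|\exp\{-2\sqrt{2}|f_{\ve,1}|(1+\varDelta_\ve)\}\|_{\mathcal{C}^{0,\mu}_{\ve\tau}(\R)} + C\|\phi\|_{\mathcal{C}^{2,\mu}_{\ve\tau}(\R^2)},
\]
so that after substituting back into Lemma \ref{fif} the resulting $o(\|\phi\|)$ can be absorbed into the left-hand side.

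To produce this bound on $f''_{\ve,1}$, I would project the equation $L_{\bar u_\ve}\phi = E(\bar u_\ve) - P(\phi)$ onto the approximate kernel direction $\rho_{\ve,1}H'_{\ve,1}$, in the spirit of the Lyapunov--Schmidt reduction. Using the explicit formula (\ref{eu}) for $E(\bar u_\ve)$ in the Fermi coordinates of $\Gamma_{\ve,1}$ and the normalization constant $c_\ve \approx c_0^{-1}$, the projection of $E(\bar u_\ve)$ against $\rho_{\ve,1}H'_{\ve,1}$ equals, along the lines of formula (5.65) in \cite{MR2557944},
\[
-c_0\bigl(f''_{\ve,1}(x_1)+h''_\ve(x_1)\bigr)+\mathcal{O}\bigl(\exp\{-2\sqrt{2}|f_{\ve,1}(x)|(1+\varDelta_\ve)\}\bigr),
\]
plus remainders of order $o(\|f''_{\ve,1}\|_{\mathcal{C}^{0,\mu}_{\ve\tau}})+o(\|h''_\ve\|_{\mathcal{C}^{0,\mu}_{\ve\tau}})$ already treated in Lemma \ref{Eu}; here the exponential term is the projection of the interaction $(F''(1)-F''(H_{\ve,1}))(H_{\ve,2}+1)$ combined with the lower bound (\ref{ddd 2}) on $|y_1|+|y_2|$. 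On the other side, because $\phi$ satisfies the orthogonality condition (\ref{orto h}), integration by parts shows that the projection of $L_{\bar u_\ve}\phi$ is $o(\|\phi\|_{\mathcal{C}^{2,\mu}_{\ve\tau}})$: only the commutator contributions supported in the outer collar $\{\tfrac12 \mathrm{d}_\ve<|y_1|<\tfrac34 \mathrm{d}_\ve\}$ (through $\nabla\rho_{\ve,1}$) and the defect of $\rho_{\ve,1}H'_{\ve,1}$ from a genuine kernel element of $L_{\bar u_\ve}$ survive, both $o(1)$ in the relevant weighted norm. The projection of $P(\phi)$, being quadratic in $\phi$, is also $o(\|\phi\|_{\mathcal{C}^{2,\mu}_{\ve\tau}})$.

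Collecting these three pieces yields a reduced ODE
\[
c_0 f''_{\ve,1}(x_1) \;=\; -c_0\, h''_\ve(x_1) \;+\; \mathcal{O}\bigl(\exp\{-2\sqrt{2}|f_{\ve,1}|(1+\varDelta_\ve)\}\bigr) \;+\; o\bigl(\|\phi\|_{\mathcal{C}^{2,\mu}_{\ve\tau}(\R^2)}\bigr),
\]
in $\mathcal{C}^{0,\mu}_{\ve\tau}(\R)$. Lemma \ref{hf1} controls $\|h''_\ve\|_{\mathcal{C}^{0,\mu}_{\ve\tau}}$ by $C\|\phi\|_{\mathcal{C}^{2,\mu}_{\ve\tau}}$ plus the exponential quantity, which gives the desired independent bound on $\|f''_{\ve,1}\|_{\mathcal{C}^{0,\mu}_{\ve\tau}}$. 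Plugging this back into Lemma \ref{fif} turns the $o(\|f''_{\ve,1}\|)$ term into $o(\|\phi\|)+o\bigl(\|\exp\{\cdots\}\|\bigr)$, so that
\[
\|\phi\|_{\mathcal{C}^{2,\mu}_{\ve\tau}(\R^2)}\;\leq\; o\bigl(\|\phi\|_{\mathcal{C}^{2,\mu}_{\ve\tau}(\R^2)}\bigr)\;+\;C\|\exp\{-2\sqrt{2}|f_{\ve,1}|(1+\varDelta_\ve)\}\|_{\mathcal{C}^{0,\mu}_{\ve\tau}(\R)},
\]
and Lemma \ref{fi} follows by absorbing the first term on the left.

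The main obstacle is the projection step itself: one must verify that after testing the dominant $\partial_{y_1}A_1/A_1$ piece of $E(\bar u_\ve)$ against $\rho_{\ve,1}H'_{\ve,1}$, the correct effective Toda coefficient $-c_0$ multiplies $f''_{\ve,1}$, and that no spurious factor of genuine (i.e.\ non-$o(1)$) size multiplies $\|f''_{\ve,1}\|_{\mathcal{C}^{0,\mu}_{\ve\tau}}$ on the right-hand side. This is essentially the reduction procedure of \cite{MR2557944} performed here in weighted H\"older spaces on $\R$ and with $u$ being a true solution of (\ref{AC}) rather than the explicit gluing ansatz. Once this bookkeeping is in place, the remainder of the argument is routine absorption, and the constants $\tau,\mu$ are fixed as in Lemma \ref{fif}.
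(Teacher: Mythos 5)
Your proposal is correct and follows essentially the same route as the paper's proof: both project the equation $L_{\bar u_\ve}\phi = E(\bar u_\ve) - P(\phi)$ against $\rho_{\ve,1}H'_{\ve,1}$, use the orthogonality condition and integration by parts to bound the left-hand projection by $o(\|\phi\|_{\mathcal{C}^{2,\mu}_{\ve\tau}})$, extract the coefficient $\sim f''_{\ve,1}$ from the $\partial_{y_1}A_1/A_1$ term of $E(\bar u_\ve)$, then close the loop via Lemmas \ref{hf1} and \ref{fif} with an absorption argument. The paper organizes the same computation in two "Steps" and iterates the substitutions in a slightly different order, but the decomposition, the use of orthogonality, and the key cancellations are identical.
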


\begin{proof}
Let us consider the integral $\int_{\mathbb{R}}\left( {\mathbf x}_{\ve, 1}^* E\left(  \bar{u}_\ve\right) \rho_{\ve, 1}H_{\ve, 1}^{\prime} \right)dy_{1}.$ We will show below (Step 1) that on one hand 
its ${\mathcal C}^{0,\mu}_{\ve\tau}(\R)$ norm is controlled by $o\left(  \left\Vert \phi\right\Vert
_{C^{0,\mu}_{\ve\tau}}\right)  .$ On the other hand (Step 2) we will show that this integral is controlled by $f''_{\ve,1}$. Then the proof of the Lemma will follow by combining this with the previous estimates.

\noindent
{\bf Step 1.}
We claim that the integral $\int_{\mathbb{R}}\left( {\mathbf x}_{\ve, 1}^* E\left(  \bar{u}_\ve\right) \rho_{\ve, 1}H_{\ve, 1}^{\prime} \right)dy_{1}$ is controlled by $o\left(  \left\Vert \phi\right\Vert
_{C^{0,\mu}_{\ve\tau}}\right)  .$
Clearly it is sufficient to estimate $E(\bar u_\ve)^\parallel$. 
We will show
\begin{equation}
\left\Vert c_{\ve}\big({\mathbf x}_{\ve,1}^*\rho_{\ve,1}H_{\ve,1}^{\prime}\big) \int_{\mathbb{R}}\big[{\mathbf x}_{\ve, 1}^* E(\bar{u}_\ve)\rho_{\ve, 1}H_{\ve, 1}^{\prime}\big]dy_{1}\right\Vert _{{\mathcal C}^{0,\mu}_{\ve\tau}(\R)}=o\left(  \left\Vert
\phi\right\Vert _{{\mathcal C}_{\ve\tau}^{2,\mu}(\R^2)}\right)  . \label{ep}%
\end{equation}
In fact,
\begin{align*}
\int_{\mathbb{R}}\big[{\mathbf x}_{\ve, 1}^* E(\bar{u}_\ve)\rho_{\ve, 1}H_{\ve, 1}^{\prime}\big]dy_{1}  &  =\int_{\mathbb{R}}\big[{\mathbf x}_{\ve, 1}^*(  -\Delta\phi
+F^{\prime\prime}(  \bar{u}_\ve)  \phi)\rho_{\ve, 1}%
H_{\ve, 1}^{\prime}\big]dy_{1}\ \\
&  +\int_{\mathbb{R}}\big[{\mathbf x}_{\ve, 1}^*  P(\phi)\rho
_{\ve, 1}H_{\ve, 1}^{\prime}\big]dy_{1}.
\end{align*}
To handle the first term appearing in the right hand side we write $\Delta_{(x_1, y_1)}=\partial_{x_1}^2+\partial_{y_1}^2$ and:
\begin{align*}
T_{2}  &  :=\underbrace{\int_{\mathbb{R}}(  -\Delta_{\left(  x_{1},y_{1}\right)
}{\mathbf x}_{\ve, 1}^*\phi+F^{\prime\prime}(H) {\mathbf x}_{\ve, 1}^* \phi)({\mathbf x}_{\ve, 1}^*\rho_{\ve, 1}%
H_{\ve, 1}^{\prime})dy_{1}}_{T_{21}}\\
& + \underbrace{\int_{\mathbb{R}}\big[ - ({\mathbf x}_{\ve, 1}^*  \Delta\phi) 
-\Delta_{(x_{1},y_{1})}({\mathbf x}_{\ve, 1}^* \phi)\big]+\big[({\mathbf x}_{\ve, 1}^* F^{\prime\prime}\left(  \bar{u}_\ve\right)\phi)
-F^{\prime\prime}\left(  H\right){\mathbf x}_{\ve, 1}^*   \phi\big] ({\mathbf x}_{\ve, 1}^* \rho_{\ve, 1}%
H_{\ve, 1}^{\prime})dy_{1}}_{T_{22}}.
\end{align*}
Using that  from $\int_{\mathbb{R}}\big[{\mathbf x}_{\ve, 1}^*\phi\rho_{\ve,1}H_{\ve,1}^{\prime}\big]dy_{1}=0,$ it follows $\partial_{x_1}\int_{\mathbb{R}}\big[{\mathbf x}_{\ve, 1}^*\phi\rho_{\ve,1}H_{\ve,1}^{\prime}\big]dy_{1}=0$, 
we get%
\begin{align*}
\  T_{21}&  =-2\int_{\mathbb{R}}\frac{\partial({\mathbf x}_{\ve, 1}^*\phi)}{\partial x_{1}}%
\frac{\partial\left( {\mathbf x}_{\ve, 1}^* \rho_{\ve,1}H_{\ve, 1}^{\prime}\right)  }{\partial x_{1}}%
-\int_{\mathbb{R}}({\mathbf x}_{\ve, 1}^*\phi)\frac{\partial^{2}\left( {\mathbf x}_{\ve, 1}^* \rho_{\ve,1}H_{\ve,1}^{\prime
}\right)  }{\partial x_{1}^{2}}\\
&\quad   +\int_{\mathbb{R}}({\mathbf x}_{\ve, 1}^*\phi)\left(  \frac{\partial^{2}\left({\mathbf x}_{\ve, 1}^*\rho
_{\ve, 1}H_{\ve, 1}^{\prime}\right)  }{\partial y_{1}^{2}}+ F^{\prime\prime
}\left(  H\right) ({\mathbf x}_{\ve, 1}^*\rho_{\ve,1}H_{\ve,1}^{\prime})\right) \\
&  =-2\int_{\mathbb{R}}\frac{\partial{\tt x}^*_{\ve, 1}\phi}{\partial x_{1}}%
\frac{\partial\left( {\mathbf x}_{\ve, 1}^* \rho_{\ve,1}H_{\ve,1}^{\prime}\right)  }{\partial x_{1}}%
-\int_{\mathbb{R}}({\mathbf x}_{\ve, 1}^*\phi)\frac{\partial^{2}\left( {\mathbf x}_{\ve, 1}^* \rho_{\ve,1}H_{\ve,1}^{\prime
}\right)  }{\partial x_{1}^{2}}\\
& \quad +\int_{\mathbb{R}}({\mathbf x}_{\ve, 1}^*\phi)\left(  \frac{\partial^{2}({\mathbf x}_{\ve, 1}^*\rho_{\ve,1})}{\partial
y_{1}^{2}}({\mathbf x}_{\ve, 1}^*H_{\ve, 1}^{\prime})+2\frac{\partial({\mathbf x}_{\ve, 1}^*\rho_{\ve,1})}{\partial y_{1}}%
\frac{\partial ({\mathbf x}_{\ve, 1}^*H_{\ve,1}^{\prime})}{\partial y_{1}}\right)  .
\end{align*}
Due to the presence of the derivatives of ${\mathbf x}_{\ve, 1}^*\rho_{\ve, 1}$ with respect to $x_1$, $y_1$ and also the presence of  $H_{\ve,1}^{\prime}$ in each term, we now
obtain that
\begin{equation}
\|c_{\ve}\big({\mathbf x}_{\ve,1}^*\rho_{\ve,1}H_{\ve,1}^{\prime}\big)T_{21}\|_{{\mathcal C}^{0,\mu}_{\ve\tau}(\R)}=o\left(  \left\Vert
\phi\right\Vert _{{\mathcal C}^{2,\mu}_{\ve\tau}(\R^2)}\right)  . \label{i1}%
\end{equation}
On the other hand,
\begin{align*}
T_{22}&  =-\int_{\mathbb{R}}\left\{  \left(  \frac{1}{A_{1}}-1\right)
\partial_{x_{1}}^{2}({\mathbf x}_{\ve, 1}^*\phi)+\frac{1}{2}\frac{\partial_{y_{1}}A_{1}}{A_{1}%
}\partial_{y_{1}}({\mathbf x}_{\ve, 1}^*\phi)-\frac{1}{2}\frac{\partial_{x_{1}}A_{1}}{A_{1}%
^{2}}\partial_{x_{1}}({\mathbf x}_{\ve, 1}^*\phi)\right\}  ({\mathbf x}_{\ve, 1}^*\rho_{\ve,1}H_{\ve, 1}^{\prime})\\
& \quad  +\int_{\mathbb{R}}\big[({\mathbf x}_{\ve, 1}^* F^{\prime\prime}\left(  \bar{u}_\ve\right)\phi)
-F^{\prime\prime}\left(  H\right){\mathbf x}_{\ve, 1}^*   \phi\big] ({\mathbf x}_{\ve, 1}^* \rho_{\ve, 1}%
H_{\ve, 1}^{\prime})dy_{1}
%
%
\end{align*}
Let ${\tt x}=(t_1,z_1)$, ${\tt x}_2=(t_2,z_2)\in \mathcal{O}_1$ be given.  Observe that if\ ${\tt x}^*_{1}=\left(  s_{1},y_{1}\right)  ,{\tt x}^*_{2}=\left(  s_{2},y_{1}\right)  $ are the Fermi coordinates  with respect to $\Gamma_{\ve, 1},$  of ${\tt x}_1$, and ${\tt x}_2$ respectively, then
\[
c(|s_1-s_2|^2+|y_1-y_2|^2)^{\mu/2}\leq (|{t}_{1}-{t}_{2}|^2+|z_1-z_2|^2)^{\mu/2} \leq C(| s_{1}-s_{2}|^2+|y_1-y_2|^2)^{\mu/2} .
\]
Now,  we will make use of the fact that $1-\frac{1}{A_1}$, $\frac{\partial_{y_{1}}A_{1}}{A_{1}}%
,\frac{\partial_{x_{1}}A_{1}}{A_{1}^{2}}$, $\big[{\mathbf x}_{\ve, 1}^*F^{\prime\prime}\left(  \bar
{u}_\ve\right)  -F^{\prime\prime}\left(  H\right)\big] $ are small terms. Let us write for example:
\[
\left(  \frac{1}{A_{1}}-1\right)  \partial_{x_{1}}^{2}({\mathbf x}_{\ve, 1}^*\phi
\rho_{\ve, 1}H_{\ve, 1}^{\prime})\left(  x_{1},y_{1}\right)=\wt \phi(x_1, y_1) H'(y_1), \quad c_{\ve}\big({\mathbf x}_{\ve,1}^*\rho_{\ve,1}H_{\ve,1}^{\prime}\big)(x_1,y_1)=\wt H^{'}(x_1,y_1).
\]
Abusing slightly  the notation one has for instance
\begin{align*}
&  \sup_{\|{\tt x}_{1}-{\tt x}_{2}\| \leq 1}\frac{1}{\|{\tt x}_{1}-{\tt x}_{2}\|^\mu}
\left| \wt H'(s_1,y_1) \int_\R \wt\phi(s_1, \wt y)H'(\wt y)\,d\wt y-\wt H'(s_2, y_2)\int_\R \wt\phi(s_2, \wt y)H'(\wt y)\,d\wt y \right|
 \\
& \leq  
 \sup_{\|{\tt x}^*_{1}-{\tt x}^*_{2}\| \leq 1}\frac{1}{\|{\tt x}_{1}-{\tt x}_{2}\|^\mu}
\left| \wt H'(s_1,y_1) \int_\R \wt\phi(s_1, \wt y)H'(\wt y)\,d\wt y-\wt H'(s_2, y_2)\int_\R \wt\phi(s_2, \wt y)H'(\wt y)\,d\wt y \right|
%
%
&  =o\left(  \left\Vert \phi\right\Vert _{{\mathcal C}^{2,\mu}(\R^2)}\right)  ,
\end{align*}
which leads to
\[
\left\Vert c_{\ve}\big({\mathbf x}_{\ve,i}^*\rho_{\ve,i}H_{\ve,i}^{\prime}\big) \int_{\mathbb{R}}\left(  \frac{1}{A_{1}}-1\right)  \partial_{x_{1}%
}^{2}({\mathbf x}_{\ve, 1}^*\phi\rho_{\ve, 1}H_{\ve, 1}^{\prime})dy_{1}\right\Vert _{{\mathcal C}^{0,\mu}(\R^2)}=o\left(
\left\Vert \phi\right\Vert _{{\mathcal C}^{2,\mu}(\R^2)}\right)  .\
\]
Other terms apperaing in the definition of $T_{22}$ can be checked similarly
%
whence we obtain%
\[
\left\Vert c_{\ve}\big({\mathbf x}_{\ve,1}^*\rho_{\ve,1}H_{\ve,i}^{\prime}\big)T_{22}\right\Vert _{{\mathcal C}^{0,\mu}(\R^2)}=o\left(  \left\Vert
\phi\right\Vert _{{\mathcal C}^{0,\mu}(\R^2)}\right)  .
\]
This together with $\left(  \ref{i1}\right)  $ tells us%
\[
\left\Vert c_{\ve}\big({\mathbf x}_{\ve,i}^*\rho_{\ve,i}H_{\ve,i}^{\prime}\big)T_{2}\right\Vert _{{\mathcal C}^{0,\mu}(\R^2)}=o\left(  \left\Vert \phi\right\Vert
_{{\mathcal C}^{2,\mu}(\R^2)}\right)  .
\]
The estimate (\ref{ep}) follows from this in a  straightforward way.

\noindent
{\bf Step 2.} We claim that the weighted norm of the integral $\int_{\mathbb{R}}\left( {\mathbf x}_{\ve, 1}^* E\left(  \bar{u}_\ve\right) \rho_{\ve, 1}H_{\ve, 1}^{\prime} \right)dy_{1}$ is controlled by $f''_{\ve,1}$. 
To do this we will now check more closely the above integral using the
definition of $\bar{u}_\ve,$ these calculations are actually similar  as in the proof Lemma
\ref{Eu}. We see that one term appearing in the integral is
\[
\frac{1}{2}\int_{\mathbb{R}}\frac{\partial_{y_{1}}A_{1}}{A_{1}}%
({\mathbf x}_{\ve, 1}^* \rho_{\ve,1}H_{\ve,1}^{\prime2})dy_{1}.
\]
We will concentrate on this term since the ${\mathcal C}^{0,\mu}_{\ve\tau}(\R)$ norm of other terms can
 be estimated by ${\mathcal O}(\|h_\ve\| _{C^{2,\mu}_{\ve\tau}(\R)})
+C\|\exp\{-2\sqrt{2}|f_{\ve,1}|(1+\varDelta_\ve)\}\|_{\mathcal{C}_{\ve\tau}^{0,\mu}(\R)}$, as  we have seen in the proof of Lemma \ref{Eu}. Plugging in  the formula for $A_{1}$ into the
above integral, one gets
\begin{align*}
\frac{1}{2}\int_{\mathbb{R}}\frac{\partial_{y_{1}}A_{1}}{A_{1}}({\mathbf x}_{\ve, 1}^*\rho_{\ve,1}%
H_{\ve,1}^{\prime2})dy_{1}  &  =\int_{\mathbb{R}}\frac{1}{A_{1}}\left(  y_{1}%
\frac{\left(  f_{\ve, 1}^{\prime\prime}\left(  x_{1}\right)  \right)  ^{2}}{\left(
1+\left(  f_{\ve, 1}^{\prime}\left(  x_{1}\right)  \right)  ^{2}\right)  ^{2}}%
-\frac{f_{\ve, 1}^{\prime\prime}\left(  x_{1}\right)  }{\sqrt{1+\left(  f_{\ve, 1}^{\prime
}\left(  x_{1}\right)  \right)  ^{2}}}\right)  ({\mathbf x}_{\ve, 1}^*\rho_{\ve, 1}H_{\ve, 1}^{\prime2})\\
&  =-\frac{1}{c_\ve}f_{\ve, 1}^{\prime\prime}\left(  x_{1}\right)  +T_{4},
\end{align*}
where $T_{4}$ is a function such that
\[
\left\Vert T_{4}\right\Vert _{{\mathcal C}^{0,\mu}_{\ve\tau}(\R)}=o\left(  \left\Vert f_{\ve, 1}^{\prime\prime
}\right\Vert _{{\mathcal C}^{0,\mu}_{\ve\tau}(\R)}\right)  .
\]
Consequently,%
\begin{align*}
\left\Vert f_{\ve, 1}^{\prime\prime}\right\Vert _{{\mathcal C}^{0,\mu}_{\ve\tau}(\R)} &\leq C\left\Vert
\int_{\mathbb{R}}( {\mathbf x}_{\ve, 1}^* E\left(  \bar{u}_\ve\right)\rho
_{\ve, 1}H_{\ve, 1}^{\prime})dy_{1}\right\Vert _{{\mathcal C}^{0,\mu}_{\ve\tau}(\R)}\\
&\quad +{\mathcal O}\left(  \left\Vert
h_\ve\right\Vert _{{\mathcal C}^{2,\mu}_{\ve\tau}(\R)}\right)  +o\left(  \left\Vert f_{\ve, 1}^{\prime\prime
}\right\Vert _{{\mathcal C}^{0,\mu}_{\ve\tau}(\R)}\right)  +C\|\exp\{-2\sqrt{2}|f_{\ve,1}|(1+\varDelta_\ve)\}\|_{\mathcal{C}_{\ve\tau}^{0,\mu}(\R)}.
\end{align*}
We then could apply Lemma $\ref{fif}$ to get%
\begin{equation}
\left\Vert f_{\ve, 1}^{\prime\prime}\right\Vert _{{\mathcal C}^{0,\mu}(\R)}={\mathcal O}\left(\|\exp\{-2\sqrt{2}|f_{\ve,1}|(1+\varDelta_\ve)\}\|_{\mathcal{C}_{\ve\tau}^{0,\mu}(\R)}\right)  +{\mathcal O}\left(  \left\Vert h_\ve\right\Vert _{{\mathcal C}^{2,\mu}_{\ve\tau}(\R)%
}\right)  . \label{eq1}%
\end{equation}
This together with (\ref{hf}), and $\left(  \ref{f1}\right)  $ implies that
\begin{equation}
\left\Vert f_{\ve, 1}^{\prime\prime}\right\Vert _{{\mathcal C}^{0,\mu}_{\ve\tau}(\R)}\leq C\|\exp\{-2\sqrt{2}|f_{\ve,1}|(1+\varDelta_\ve)\}\|_{\mathcal{C}_{\ve\tau}^{0,\mu}(\R)}. \label{eq2}%
\end{equation}
As a consequence:
\[
\left\Vert \phi\right\Vert _{{\mathcal C}^{2,\mu}_{\ve\tau}(\R^2)}\leq C\|\exp\{-2\sqrt{2}|f_{\ve,1}|(1+\varDelta_\ve)\}\|_{\mathcal{C}_{\ve\tau}^{0,\mu}(\R)}.
\]

\end{proof}
\red{Above we were measuring the quantities involved in  the weighted H\"older norms $\mathcal{C}^{2,\mu}_{\ve\tau}(\R^2)$ and $\mathcal{C}^{2,\mu}_{\ve\tau}(\R)$ with $\tau>0$. However all proofs extend to the case $\tau=0$ i.e. when standard H\"older norms are measured.  For convenience we will state the relevant result 
\begin{corollary}\label{col holder}
The following estimates hold:
\[
\|\phi\|_{\mathcal{C}^{2,\mu}(\R^2)}=\mathcal{O}(\varUpsilon_\ve), \quad \|f''_{\ve, 1}\|_{\mathcal{C}^{0,\mu}(\R)}=\mathcal{O}(\varUpsilon_\ve),\quad \|h_\ve\|_{\mathcal{C}^{2,\mu}(\R)}=\mathcal{O}(\varUpsilon_\ve),
\]
where we have denoted:
\[
\varUpsilon_\ve=\|\exp\{-2\sqrt{2}|f_{\ve,1}|(1+\varDelta_\ve)\}\|_{\mathcal{C}^{0,\mu}(\R)}.
\]
\end{corollary}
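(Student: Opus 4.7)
The plan is to revisit the chain of estimates established earlier in the section -- Proposition \ref{apriori} and Lemmas \ref{Eu}, \ref{hf1}, \ref{fif}, \ref{fi} -- and to verify that the proofs go through verbatim when the weighted norm $\mathcal{C}^{\ell,\mu}_{\ve\tau}$ is replaced everywhere by the unweighted $\mathcal{C}^{\ell,\mu}$. The weight $(\cosh x)^{\ve\tau}$ was introduced only to quantify the exponential decay of the various quantities along the ends of the solution; the inequalities themselves are entirely local and the constants involved do not deteriorate when this decay is forgotten. In particular, all computations in Fermi coordinates, the Taylor expansions of $F'$ and $F''$ around $H_{\ve,i}$, and the pointwise expression (\ref{eu}) for $E(\bar u_\ve)$ are unchanged.

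The one input that requires comment is the unweighted version of the a priori estimate, Proposition \ref{apriori} with $\tau=0$. Its proof by contradiction -- analogous to Proposition 5.1 in \cite{MR2557944} -- blows up around a maximizing sequence and invokes the (neutral) stability of the one-dimensional heteroclinic on $\R$; the orthogonality condition (\ref{ort xxx}) is exactly what prevents a limit from being proportional to $H'$. This argument is manifestly insensitive to the weight and, applied with $\tau=0$, yields
\[
\|\varphi\|_{\mathcal{C}^{2,\mu}(\R^2)} + \sum_{i=1,2}\|\kappa_{i,\ve}\|_{\mathcal{C}^{0,\mu}(\R)} \leq C\|f\|_{\mathcal{C}^{0,\mu}(\R^2)}.
\]

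Given this bound, the remaining steps are mechanical. The projection cancellations carried out in the proof of Lemma \ref{Eu} produce the same $o(\cdot)$ improvement, so its unweighted counterpart reads
\[
\|E(\bar u_\ve)^\perp\|_{\mathcal{C}^{0,\mu}(\R^2)} = o\bigl(\|f''_{\ve,1}\|_{\mathcal{C}^{0,\mu}(\R)} + \|h'_\ve\|_{\mathcal{C}^{0,\mu}(\R)} + \|h''_\ve\|_{\mathcal{C}^{0,\mu}(\R)}\bigr) + \mathcal{O}(\varUpsilon_\ve).
\]
The identity $u(x,f_{\ve,1}(x))=0$ underlying Lemma \ref{hf1} is pointwise and has no weight built into it, so it delivers $\|h_\ve\|_{\mathcal{C}^{2,\mu}(\R)} \leq C\|\phi\|_{\mathcal{C}^{2,\mu}(\R^2)} + C\varUpsilon_\ve$ in the same way. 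The bootstrap of Lemma \ref{fi} -- writing the equation as $L_{\bar u_\ve}\phi = E(\bar u_\ve)^\perp + P(\phi) + E(\bar u_\ve)^\parallel$, using the unweighted a priori estimate to absorb $\phi$, and then using the projection $\int_\R {\tt x}^*_{\ve,1}(E(\bar u_\ve)\rho_{\ve,1}H'_{\ve,1})\,dy_1$ to recover $f''_{\ve,1}$ -- is purely algebraic and closes the system.

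The main obstacle, to the extent that there is one, is bookkeeping: one must check that every $o(1)$ factor appearing along the way arises from $\|f'_{\ve,1}\|_{\mathcal{C}^0(\R)}$, $\|f''_{\ve,1}\|_{\mathcal{C}^0(\R)}$, or a similar \emph{unweighted} quantity, and therefore remains $o(1)$ uniformly in $\ve$. This was established at the beginning of Section \ref{nodal asymp}, where it was shown that $\|f'_{\ve,1}\|_{\mathcal{C}^2(\R)}\to 0$ as $\ve\to 0$, so no new smallness input is required and the three estimates of the corollary follow simultaneously.
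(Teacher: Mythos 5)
Your proposal is correct and follows the same approach as the paper, which itself dispatches the corollary with the one-sentence remark preceding the statement: that all of the preceding arguments extend verbatim to the case $\tau=0$, replacing $\mathcal{C}^{\ell,\mu}_{\ve\tau}$ by $\mathcal{C}^{\ell,\mu}$. Your elaboration -- verifying that the contradiction argument in Proposition \ref{apriori} is weight-insensitive, tracing the chain through Lemmas \ref{Eu}, \ref{hf1}, \ref{fif}, \ref{fi}, and checking that the various $o(1)$ factors are produced by unweighted quantities such as $\|f'_{\ve,1}\|_{\mathcal{C}^2(\R)}\to 0$ -- is precisely the bookkeeping the paper leaves implicit, and it is the correct justification.
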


Observe that with in particular we have: 
\begin{equation}
\label{517}
\|\phi\|_{\mathcal{C}^{2,\mu}(\R^2)}\leq Ce^{\,-2\sqrt{2}|f_{\ve, 1}(0)|(1+o(1))},\quad  \|f''_{\ve, 1}\|_{\mathcal{C}^{0,\mu}(\R)}\leq Ce^{\,-2\sqrt{2}|f_{\ve, 1}(0)|(1+o(1))}, \quad \|h_\ve\|_{\mathcal{C}^{2,\mu}(\R)}\leq Ce^{\,-2\sqrt{2}|f_{\ve, 1}(0)|(1+o(1))}. 
\end{equation}}
Now we will proceed to estimate the quantity
$e^{\,-\sqrt{2}|f_{\ve, 1}(0)|}$. To this end, we first need to obtain some exponential
decay estimate of $\phi$ along the $y$ axis away from $\Gamma_{\ve, 1}$. Note that
$E\left(  \bar{u}_\ve\right)  $  decays exponentially 
the direction transversal to the nodal line $\Gamma_{\ve, 1}$. Indeed, using (\ref{eu}) and the exponential decay of $H\pm 1$, $H'$ and $H''$ one can show:
\begin{equation}
\label{eu expo 1}
|E(\bar u_\ve)({\tt x})|\leq Ce^{\,-\sqrt{2}|f_{\ve, 1}(0)|} e^{\,-\sqrt{2}\mathrm{\dist}\,({\tt x}, \Gamma_{\ve, 1})}, \quad {\tt x}=(x, y), \quad y\leq 0.
\end{equation}

We write the equation for  $\phi$ in the form 
\[
-\Delta\phi+\big[F^{\prime\prime}\left(  \bar{u}_\ve\right)+\frac{P(\phi)}{\phi}\big]  \phi=E\left(  \bar
{u}_\ve\right).
\]
For any constant 
$0<\iota_0<\sqrt{2}$ there exists $r_0$ large and $\ve$ small, such that:
\[
F^{\prime\prime}\left(  \bar{u}_\ve({\tt x})\right)+\frac{P(\phi({\tt x}))}{\phi({\tt x})}\geq (\sqrt{2}-\iota_0)^2, \quad \mathrm{dist}\,({\tt x}, \Gamma_{\ve, 1})>r_0, \quad {\tt x}=(x,y), \quad y\leq 0.
\]
%
Using barriers we then obtain:
\begin{equation}
|\phi({\tt x})|\leq C_{r_0}e^{\,-\sqrt{2}|f_{\ve, 1}(0)|} e^{\,-(\sqrt{2}-\iota_0)\mathrm{dist}\, ({\tt x}, \Gamma_{\ve, 1})}, 
\label{expf}%
\end{equation}
in the lower half plane. Note that this estimate is in some sense precise only along the $y$ axis, since in reality we expect that $\phi({\tt x})\sim e^{\,-(2\sqrt{2}-\iota_0)\mathrm{dist}\, ({\tt x}, \Gamma_{\ve, 1})}$. This estimate can be  bootstrapped using elliptic estimates to get a similar estimate for the derivatives of the function $\phi$.

Let  us go back to the  Toda system (\ref{toda 2})--(\ref{toda 3}) and recall that by $q_{\ve, 1}(x)<0<q_{\ve, 2}(x)$ we have denoted the solution of this system whose slope at $\infty$ is $\ve$ (this means the tangent of the  asymptotic angle between the line $y=q_{\ve, 2}(x)$ and the $x$ axis in the first quadrant). We note that the curve $\widetilde \Gamma_{\ve, 1}=\{y=q_{\ve, 1}(x)\}$ is contained in the lower half plane. 

In what follows we  use $\alpha,\beta$ to denote general positive
constants independent of $\varepsilon.$

\begin{lemma}\label{est at 0}
There exists $\alpha_1>0$ such that $\left\vert f_{\ve, 1}\left(  0\right)  -q_{\ve, 1}\left(
0\right)  \right\vert \leq C\varepsilon^{\alpha_1}.$
\end{lemma}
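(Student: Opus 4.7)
The plan is to derive an effective first integral for $f_{\ve,1}$ and compare it with the exact first integral of the Toda equation (\ref{toda 3}) satisfied by $q_{\ve,1}$. Both functions are even, have vanishing derivative at $0$, tend to $-\infty$ at $\pm\infty$, and have slope $-\ve$ at $+\infty$, so their value at $0$ is in principle determined by the asymptotic slope through an energy identity.

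The first step is to extract from the projection computation of Step 2 in the proof of Lemma \ref{fi} an effective ODE of the form
\begin{align*}
c_0 f''_{\ve,1}(x) = -K_0\, e^{2\alpha_0 f_{\ve,1}(x)} + \mathcal{R}_\ve(x),
\end{align*}
where $K_0>0$ is the same constant that appears in the reduced Toda equation (\ref{toda 3}). The term $-\tfrac{1}{c_\ve}f''_{\ve,1}$ comes from projecting $\tfrac{1}{2}\partial_{y_1}A_1/A_1\cdot H'_{\ve,1}$ exactly as computed there, while the Toda nonlinearity is produced by the cross interaction $-(F''(1)-F''(H_{\ve,1}))(H_{\ve,2}+1)$ projected onto $H'_{\ve,1}$; this is precisely the calculation underlying the existence argument of \cite{MR2557944}. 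The remainder $\mathcal{R}_\ve$ collects (i) the higher order cross interaction of size $(H_{\ve,2}+1)^2$, (ii) all terms involving $h_\ve$, $h'_\ve$, $h''_\ve$, and (iii) the projections of $\phi$ and $P(\phi)$. By Corollary \ref{col holder} and (\ref{517}), each of (i)--(iii) carries an extra small factor (either an additional $(H_{\ve,2}+1)$, or a factor $h_\ve$ or $\phi$ of size $e^{-2\alpha_0|f_{\ve,1}(0)|}$), yielding $|\mathcal{R}_\ve(x)|\leq C\,e^{-2\alpha_0|f_{\ve,1}(x)|(1+\delta)}$ for some $\delta>0$ independent of $\ve$.

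Second, I multiply the ODE by $f'_{\ve,1}$ and integrate over $[0,\infty)$. Concavity of $f_{\ve,1}$ (which follows from the approximate ODE) gives the lower bounds $|f_{\ve,1}(x)|\geq |f_{\ve,1}(0)|$ globally and $|f_{\ve,1}(x)|\geq |f_{\ve,1}(0)|+(\ve-o(1))|x|$ for large $|x|$. Using $f'_{\ve,1}(0)=0$, $f'_{\ve,1}(+\infty)=-\ve$, and $f_{\ve,1}(+\infty)=-\infty$, I obtain
\begin{align*}
\tfrac{c_0}{2}\ve^2 - \tfrac{K_0}{2\alpha_0}\,e^{2\alpha_0 f_{\ve,1}(0)} = \int_0^\infty \mathcal{R}_\ve(x)\, f'_{\ve,1}(x)\,dx,
\end{align*}
while the exact Toda equation yields $\tfrac{c_0}{2}\ve^2 = \tfrac{K_0}{2\alpha_0}\,e^{2\alpha_0 q_{\ve,1}(0)}$. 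Setting $E:=e^{2\alpha_0 f_{\ve,1}(0)}$ and $E_0:=e^{2\alpha_0 q_{\ve,1}(0)}$, using $|f'_{\ve,1}|\leq C\ve$ and integrating the remainder bound, subtraction gives
\begin{align*}
|E_0 - E|\leq C\,E^{1+\delta}.
\end{align*}

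From this inequality alone a short bootstrap forces $E\sim\ve^2$: if $E\ll\ve^2$ then the left side is comparable to $E_0\sim\ve^2$ but the right side is much smaller, and if $E\gg\ve^2$ then the left side is comparable to $E$ while the right side is of strictly smaller order. Consequently $E\sim\ve^2$ and the inequality upgrades to $|E-E_0|\leq C\ve^{2+2\delta}$. Applying the mean value theorem to $t\mapsto \tfrac{1}{2\alpha_0}\log t$ near $t\sim\ve^2$ produces $|f_{\ve,1}(0)-q_{\ve,1}(0)|\leq C\ve^{2\delta}$, which proves the lemma with $\alpha_1=2\delta$.

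The main obstacle is the first step: isolating the Toda term with its exact coefficient $K_0$ and verifying that every remaining contribution to $\mathcal{R}_\ve$ carries strictly better decay than the Toda term itself. This is a more refined bookkeeping of the projection than what appears in Lemma \ref{Eu} and Lemma \ref{fi}, where only an overall bound of size $e^{-2\alpha_0|f_{\ve,1}|}$ was tracked.
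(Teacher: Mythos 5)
Your approach is genuinely different from the paper's. The paper proves this lemma via the Hamiltonian identity (\ref{ham 23}): it expands $\int_{\R}\{\tfrac12 u_y^2+F(u)\}(0,y)\,dy$ using $u(0,y)\approx H(y-t)-H(y+t)-1+\phi$ with $t=f_{\ve,1}(0)+h_\ve(0)$, obtaining $I_0=2\mathbf{e}_F-2\sqrt{2}\mathbf{a}_F^2e^{-2\sqrt{2}|t|}+\mathcal{O}(e^{-3|f_{\ve,1}(0)|})$, and then plays the same identity against the \emph{known} solution $u_\ve$ of \cite{MR2557944} (whose nodal line is $q_{\ve,1}+r_{\ve,1}(\ve\cdot)$ with $|r_{\ve,1}(0)|\leq C\ve^\alpha$) to equate $e^{-2\sqrt{2}|f_{\ve,1}(0)+h_\ve(0)|}$ with $e^{-2\sqrt{2}|q_{\ve,1}(0)+r_{\ve,1}(0)|}$. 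Your proposal replaces this "transverse" Hamiltonian identity and the external comparison input from \cite{MR2557944} with a "longitudinal" first integral for an effective Toda ODE for $f_{\ve,1}$, which, if it worked, would be more self-contained.

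However, there is a genuine gap in the step you yourself flag as the main obstacle, namely the bound $|\mathcal{R}_\ve(x)|\leq Ce^{-2\alpha_0(1+\delta)|f_{\ve,1}(x)|}$ with $\delta>0$ \emph{fixed and independent of} $\ve$. Looking at the error expansion (\ref{eu}), the term $-\tfrac{h_\ve''(x_1)}{A_1}H'_{\ve,1}$ enters the projected equation at \emph{the same order} as the $f''_{\ve,1}$ term (coefficient essentially $1$), so it cannot be dumped into $\mathcal{R}_\ve$: the available bound at this stage (Corollary \ref{col holder} and (\ref{517})) is only $\|h_\ve\|_{\mathcal{C}^{2,\mu}}\lesssim\varUpsilon_\ve\sim e^{-2\sqrt{2}|f_{\ve,1}(0)|(1+\varDelta_\ve)}$ with $\varDelta_\ve=\mathcal{O}(\|f'_{\ve,1}\|^2_{\mathcal{C}^0})=o(1)$; this is of the form $E^{1+o(1)}$, not $E^{1+\delta}$ for a fixed $\delta$, since no quantitative lower bound on $\varDelta_\ve$ is available. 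One would therefore have to work with $p_{\ve,1}=f_{\ve,1}+h_\ve$ rather than $f_{\ve,1}$ in the effective ODE (as the paper does later in (\ref{t24})). But even then the contributions to $\mathcal{R}_\ve$ from the $\phi$- and $P(\phi)$-projections and from the metric coefficients $1-A_1^{-1}$, $\partial_{x_1}A_1/A_1^2$, etc., are controlled in the paper's Lemmas \ref{Eu} and \ref{fi} only by $o(1)$ factors times $E$, where again the $o(1)$ depends on $\|f'_{\ve,1}\|_{\mathcal{C}^0}\to0$ without a quantified $\ve$-rate. Running your subtraction and bootstrap with $|\mathcal{R}_\ve|\lesssim o(1)\,e^{-2\alpha_0|f_{\ve,1}(x)|}$ only yields $|E-E_0|\lesssim o(1)E_0$, hence $|f_{\ve,1}(0)-q_{\ve,1}(0)|=o(1)$, which is qualitatively correct but strictly weaker than the $\mathcal{O}(\ve^{\alpha_1})$ rate required. (There is a circularity lurking: the $\ve^{\alpha}$-rate for $\|f'_{\ve,1}\|$ is exactly the sort of information that Proposition \ref{prop gamma} provides \emph{after} this lemma is available.) The paper's use of the Hamiltonian identity sidesteps this because the error $\psi$ in the transverse profile is controlled by $e^{-\sigma\mathrm{d}_\ve(0)}$, which decays \emph{super}-exponentially in $|f_{\ve,1}(0)|$ by (\ref{est psi}) and the definition of $\mathrm{d}_\ve$, and because the $\mathcal{O}(\ve^\alpha)$ rate comes imported from the \cite{MR2557944} construction rather than being extracted from the error terms of the Fermi expansion.

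A secondary issue: the effective ODE obtained by projecting $E(\bar u_\ve)$ onto $H'_{\ve,1}$ is derived in the paper only on the interval $I_a$ with $a\sim|\log\ve|/\ve$ where the Fermi coordinates and the expansion (\ref{eu}) are valid; extending the first integral computation to all of $[0,\infty)$ requires additional work (the paper handles the outer region by a separate balancing argument in Step 3 of the proof of Proposition \ref{prop gamma}), and you would need to justify that the boundary contributions at $x=a$ are negligible of the required order.
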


\begin{proof}
The idea of the proof is to relate the asymptotic  behavior of $u$ along vertical straight lines,   as $\ve \to 0$, using the Hamiltonian identity:
\begin{equation}
\label{ham 23}
\int_\R \left\{  \frac{1}{2}u_{y}^{2}\left(  0,y\right)-\frac{1}{2}u_{x}^{2}\left(  0,y\right)
+F\left(  u\left(  0,y\right)  \right)  \right\}  dy=\int_\R \left\{  \frac{1}{2}u_{y}^{2}\left(  x,y\right)-\frac{1}{2}u_{x}^{2}\left(  x,y\right)
+F\left(  u\left(  x,y\right)  \right)  \right\}  dy, \quad \forall x,
\end{equation}
and in particular take $x\to \infty$ on the right hand side of (\ref{ham 23}).  Indeed, using the asymptotic behavior of a four ended solution it is not hard to show that:
\[
\lim_{x\to \infty}\int_\R \left\{  \frac{1}{2}u_{y}^{2}\left(  x,y\right)-\frac{1}{2}u_{x}^{2}\left(  x,y\right)
+F\left(  u\left(  x,y\right)  \right)  \right\}  dy=2{\mathbf{e}}_F \cos\theta(u), \quad {\mathbf e}_F=\int_\R\frac{1}{2}(H')^2+F(H), \quad \ve=\tan\theta(u).
\]
Since $u$ is an even function of $x$ we also have $u_x(0,y)=0$ and thus it follows from (\ref{ham 23}):
\[
\int_\R \left\{  \frac{1}{2}u_{y}^{2}\left(  0,y\right)
+F\left(  u\left(  0,y\right)  \right)  \right\}  dy= 2{\mathbf{e}}_F \cos\theta(u).
\]
We will now calculate the left hand side of the above identity. 

Recall that the heteroclinic solution has the following asymptotic behavior,
which can also be differentiated:
\[
H\left(  s\right)  =1-\mathbf{a}_{F}e^{-\sqrt{2}s}+O\left(  e^{-2\sqrt{2}%
s}\right)  ,\text{ as }s\rightarrow+\infty.
\]
Denote $t=f_{\ve, 1}\left(  0\right)  +h_{\ve}\left(  0\right)  .$ 
 Let  $\eta_1$ be the cut off function appearing in the definition of the approximate solution  (\ref{def hve}). 
 Along the $y$-axis  it holds $(x_1, y_1)=(0,y)$, where $(x_1,y_1)$ are the Fermi coordinates of $\Gamma_{\ve, 1}$ and then, abusing the notation slightly we can write
\begin{align*}
u\left(  0,y\right)  &=\underbrace{H\left(  y-t\right)  -H\left(  y+t\right)
-1+\phi\left(  0,y\right)}_{u_0(y)}\\
&\quad + \underbrace{(1-\eta_1(0,y))\Big[\frac{H\left(  y-t\right)}{|H\left(  y-t\right)}-H\left(  y-t\right)\Big]- 
(1-\eta_1(0,y))\Big[\frac{H\left(  y+t\right)}{|H\left(  y+t\right)|}-H\left(  y+t\right)\Big] }_{\psi(y)}.
\end{align*}
We observe that for all $\sigma>0$, 
\begin{equation}
|\psi(y)|\leq C^{\,-(\sqrt{2}-\sigma)|y|} e^{\,-\sigma \mathrm{d}_\ve(0)}, \label{est psi}
\end{equation}
and by the definition of $\mathrm{d}_\ve(0)$ 
\[
\mathrm{d}_\ve(0)\geq \frac{1}{2}\Big(\frac{\theta}{2{\|f_{\ve, 1}''\|_{{\mathcal C}^0(\R)}}}\Big)\geq Ce^{\,{2\sqrt{2}}|f_{\ve, 1}(0)|},
\]
where the last estimate follows form (\ref{517}). Then we find, taking $\sigma=\frac{3\sqrt{2}}{4}$ above, 
\[
 \int_{-\infty}^{0}\left\{  \frac{1}{2}u_{y}^{2}\left(  0,y\right)
+F\left(  u\left(  0,y\right)  \right)  \right\}  dy= \int_{-\infty}^{0}\left\{  \frac{1}{2}u_{0,y}^{2}\left(y\right)
+F\left(  u_0\left(y\right)  \right)  \right\}  dy+o(e^{\,-3|f_{\ve, 1}(0)|}).
\]
Now we calculate
\begin{align}
&  \int_{-\infty}^{0}\left\{  \frac{1}{2}u_{0,y}^{2}\left(y\right)
+F\left(  u_0\left(y\right)  \right)  \right\}  dy\nonumber\\
&  =\underbrace{\int_{-\infty}^{0}\left\{  \frac{1}{2}\left(  H^{\prime}\left(
y-t\right)  \right)  ^{2}+F\left(  H\left(  y-t\right)  \right)  \right\}
dy}_{I_1}\nonumber\\
&  +\underbrace{\int_{-\infty}^{0}\left\{  H^{\prime}\left(  y-t\right)  \left(
\partial_{y}\phi-H^{\prime}\left(  y+t\right)  \right)  +F^{\prime}\left(
H\left(  y-t\right)  \right)  \left(  \phi-H\left(  y+t\right)  -1\right)
\right\}  dy}_{I_2}\nonumber\\
&  +\underbrace{\frac{1}{2}\int_{-\infty}^{0}\left(  \partial_{y}\phi-H^{\prime}\left(
y+t\right)  \right)  ^{2}+F^{\prime\prime}\left(  H\left(  y-t\right)
\right)  \left(  \phi-H\left(  y+t\right)  -1\right)  ^{2}dy}_{I_3}\nonumber\\
&  +\mathcal{O}\left(  \int_{-\infty}^{0}\left(  \phi-H\left(  y+t\right)  -1\right)
^{3}dy\right)  \label{dis1}%
\end{align}
We note that 
\[
t=f_{\ve, 1}(0)+h_{\ve}(0)=f_{\ve, 1}(0)+{\mathcal O}(e^{\,-2\sqrt{2}|f_{\ve, 1}(0)|(1+o(1))})<0.
\]
The first term on the right hand side of $\left(  \ref{dis1}\right)  $ is
equal to
\begin{align*}
I_1&  =\int_{-\infty}^{-t}\left\{  \frac{1}{2}\left(  H^{\prime}\right)
^{2}+F\left(  H\right)  \right\}  dy\\
&  =\mathbf{e}_{F}-\int_{-t}^{+\infty}\left\{  \frac{1}{2}\left(  H^{\prime
}\right)  ^{2}+F\left(  H\right)  \right\}  dy\\
\  &  =\mathbf{e}_{F}-\int_{-t}^{+\infty}2\mathbf{a}_{F}^{2}e^{-2\sqrt{2}%
y}dy+{\mathcal O}\left(  e^{\,-4\sqrt{2}|t|}\right) \\
&  =\mathbf{e}_{F}-\frac{\sqrt{2}}{2}\mathbf{a}_{F}^{2}e^{\,-2\sqrt{2}|t|}+O\left(
e^{\,-3\sqrt{2}|t|}\right)  .
\end{align*}
We observe that, after an integration by parts,  the second term is equal to%
\[
I_2=H^{\prime}\left(  -t\right)  \left(  \phi\left(0\right)  -H\left(
t\right)  -1\right)  =-\sqrt{2}\mathbf{a}_{F}^{2}e^{\,-2\sqrt{2}|t|}+{\mathcal O}\left(
e^{\,-\frac{5}{2}\sqrt{2}|t|}\right)  .
\]
As to the last term, one has
\begin{align*}
I_3 &  ={\mathcal O}\left(  e^{\,-4|t|}\right)  +\frac{1}{2}\int_{-\infty}^{0}\left(  H^{\prime
}\left(  y+t\right)  \right)  ^{2}+F^{\prime\prime}\left(  H\left(
y-t\right)  \right)  \left(  H\left(  y+t\right)  -1\right)  ^{2}dy\\
&  ={\mathcal O}\left(  e^{\,-4|t|}\right)  +\frac{\sqrt{2}\mathbf{a}_{F}^{2}}{4}e^{\,-2\sqrt
{2}|t|}+\frac{\mathbf{a}_{F}^{2}}{2}\int_{-\infty}^{0}F^{\prime\prime}\left(
H\left(  y-t\right)  \right)  e^{\,-2\sqrt{2}|y+t|}dy\\
&  ={\mathcal O}\left(  e^{\,-4|t|}\right)  +\frac{\sqrt{2}\mathbf{a}_{F}^{2}}{2}e^{\,-2\sqrt
{2}|t|}.
\end{align*}
Therefore, we get that
\[
I_0:=\int_\R \left\{  \frac{1}{2}u_{y}^{2}\left(  0,y\right)
+F\left(  u\left(  0,y\right)  \right)  \right\}  dy=2\mathbf{e}_{F}-2\sqrt{2}\mathbf{a}_{F}^{2}e^{\,-2\sqrt{2}|f_{\ve, 1}\left(
0\right)  +h_\ve\left(  0\right)|  }+{\mathcal O}\left(  e^{\,-3|f_{\ve, 1}\left(  0\right)|
}\right)  .
\]
According to the Hamiltonian identity (\ref{ham 23}),
\begin{align*}
I_0 =2\mathbf{e}_{F}\cos\theta({u}).
\end{align*}
Now, let  $u_{\varepsilon}$ with
$\varepsilon=\tan\theta({u})$ be a solution constructed in \cite{MR2557944} whose nodal line in the lower half plane  is  given by the curve  $y=q_{\ve,1}(x)+r_{\ve,1}(\ve x)$, where $q_{\ve, 1}$ is the solution of the Toda system whose asymptotic angle at $\infty$ is $\ve$, and $r_{\ve,1}(x)$ satisfies, as we stated in section \ref{exists small eps}, with some $\alpha>0$
\[
\|r_{\ve,1}\|_{\mathcal C^{2, \mu}_{\tau } (\mathbb R) \oplus D} 
 \leq C \,  \varepsilon^{\alpha} 
\]
We recall that since we are working in the class of even function $|r_{\ve, 1}(x)|\leq C\ve ^\alpha$, which implies that  $r_{\ve, 1}$ is a bounded, small function. 
 Now, the   Hamiltonian identity (\ref{ham 23}) can be used for $u_\ve$ as well and  by a
computation we get
\[
2\mathbf{e}_{F}\cos\theta({u}_\ve)=2\mathbf{e}_{F}-2\sqrt{2}\mathbf{a}_{F}%
^{2}e^{\,-2\sqrt{2}|q_{\ve, 1}\left(  0\right)  +r_{\ve, 1}(0)|}+\mathcal{O}(e^{\,-3|q_{\ve, 1}\left(  0\right)  +r_{\ve, 1}(0)|})%
\]
where  $r_{\ve, 1}(0)={\mathcal O}\left(  \varepsilon^{\alpha}\right)  .$ Therefore,
\[
I_0=2\mathbf{e}_{F}-2\sqrt{2}\mathbf{a}_{F}^{2}e^{\,-2\sqrt{2}|q_{\ve,1}\left(  0\right)
+r_{\ve,1}(0)|}+\mathcal{O}(e^{\,-3|q_{\ve, 1}\left(  0\right)  +r_{\ve, 1}(0)|}).
\]
That is,
\[
e^{\,-2\sqrt{2}|f_{\ve, 1}\left(
0\right)  +h_\ve\left(  0\right)|  }+{\mathcal O}\left(  e^{\,-3|f_{\ve, 1}|\left(  0\right)|
}\right) =e^{\,-2\sqrt{2}|q_{\ve, 1}\left(  0\right)  +r_{\ve, 1}(0)|}+\mathcal{O}(e^{\,-3|q_{\ve, 1}\left(  0\right)  +r_{\ve, 1}(0)|})
%
\]
This yields%
\[
f_{\ve, 1}\left(  0\right)  +h_\ve \left(  0\right)  +{\mathcal O}\left(  e^{\,-\left(  3-2\sqrt
{2}\right)| f_{\ve, 1}\left(  0\right)  +h_\ve \left(  0\right)|  }\right)
=q_{\ve,1}\left(  0\right)  +{\mathcal O}\left(  \varepsilon^{\alpha}\right)  .
\]
Since $q_{\ve, 1}\left(  0\right)  -\frac{\sqrt{2}}{2}\log\varepsilon={\mathcal O}\left(  1\right)
,$ we get%
\[
f_{\ve, 1}\left(  0\right)  +h_{\ve}\left(  0\right)  =\frac{\sqrt{2}}{2}\log\varepsilon
+{\mathcal O}\left(  1\right)  ,
\]
which leads to
\[
f_{\ve, 1}\left(  0\right)  +h_{\ve}\left(  0\right)  -q_{\ve, 1}\left(  0\right)  ={\mathcal O}\left(
\varepsilon^{\alpha}\right),
\]
as claimed. This ends the proof.
\end{proof}
%
%
%
%

\red{Let us again summarize in the estimates we have obtained so far. Using the above Lemma and Corollary \ref{col holder} we get: for any $\varsigma>0$ there exists a constant $C_\varsigma$ such that 
\begin{equation}
\label{soft est}
\|\phi\|_{\mathcal{C}^{2,\mu}(\R^2)}\leq C_\varsigma\ve^{2-\varsigma}, \quad \|h_{\ve}\|_{\mathcal{C}^{2,\mu}(\R)}\leq C_\varsigma\ve^{2-\varsigma}, \quad  \|f''_{1,\ve}\|_{\mathcal{C}^{0,\mu}(\R)}\leq C_\varsigma \ve^{2-\varsigma}.
\end{equation}}
Now we are in position to prove Proposition \ref{prop gamma}. As we will see the proof of Proposition \ref{estim hat phi}  will be obtained as an intermediate step

\begin{proof}
Our goal is to show  estimate  (\ref{first estimate}) and this will be done in few steps. For brevity let  us denote $p_{\ve, 1}=f_{\ve, 1}+h_{\ve}$, so that $\chi_{\ve, 1}=p_{\ve, 1}-q_{\ve, 1}$. 

\noindent{\bf Step 1.}
We first claim that if $I_a:=\left(  -a,a\right)  $ is an interval where
\begin{equation}
 |p_{\ve, 1}\left(  x\right)|<2\left\vert \log\varepsilon
\right\vert ,\quad | p_{\ve, 1}''\left(  x\right) | <C_\varsigma\varepsilon^{2-\varsigma},\quad x\in I_a
\label{q}%
\end{equation}
(such an interval clearly exists, which can be seen by combining (\ref{517}) and Lemma \ref{est at 0}),
then $p_{\ve,1}$ satisfies a non homogeneous  Toda equation in $I_a,$ i.e.,
\begin{equation}
\label{t23}
\bar c_0p_{\ve, 1}^{\prime\prime}\left(  x\right)  =-2e^{\,2\sqrt
{2}p_{\ve, 1}\left(  x\right)  }+\lambda_{\ve, 1}\left(  x\right)  ,\quad x\in I_a,
\end{equation}
where
\begin{equation}
\label{lambda 1}
\left\Vert \lambda_{\ve, 1}\right\Vert _{C^{0,\mu}\left(  I_a\right)  }\leq
C\varepsilon^{2+\beta_{1}},
\end{equation}
for some constant $\beta_{1}>0.$  Note that the solution $q_{\ve, 1}$ of the Toda system does satisfy $\left(
\ref{q}\right)  $ in the interval $\left(  \frac{-|\log\varepsilon|}{\varepsilon
},\frac{|\log\varepsilon|}{\varepsilon}\right)$, and in fact we will show that $a=\mathcal{O}(\frac{|\log\ve|}{\ve})$.  Thus we loss no generality assuming a priori $a<3\frac{|\log{\ve}|}{\ve}$. \red{If this  is the case then, from (\ref{lambda 1}) it follows that 
\[
\left\Vert \lambda_{\ve, 1}\right\Vert _{C^{0,\mu}_{\ve\tau}\left(  I_a\right)  }\leq
C\varepsilon^{2+\beta_{2}},
\]
with some $\beta_2>0$, if $\tau$ is taken small. We also note that because of symmetry we have:
\[
|p'_{\ve, 1}(x)|\leq C_\varsigma\ve^{2-\varsigma}a, \quad x\in I_a.
\]

}

To begin the proof of the claim we observe that for ${\tt x}=(x,y)$ such that  $\pi_{\ve, 1}^x({\tt x})=x_1\in I_a,$ we have%
\begin{align}
\label{alpha 23}
\begin{aligned}
\left\vert x_{1}-x_{2}\right\vert  &  \leq C\varepsilon^{\alpha},\\
\left\vert y_{1}-y_{2}+2f_{\ve, 1}\left(  x_{1}\right)  \right\vert  &  \leq
C\varepsilon^{\alpha},
\end{aligned}
\end{align}
with some $\alpha>0$, where  $(x_i, y_i)$ denote  the Fermi coordinate of ${\tt x}$  around
$\Gamma_{\ve, i}$ for $x\in I_a$ (whenever these coordinates are defined). 
 Using this,  $\left(  \ref{ep}\right)$, and (\ref{soft est}) we can calculate $\int_{\mathbb{R}%
}\mathbf{x}_{1}^{\ast}\left(  E\left(  \bar{u}\right)  \rho_{1}H_{1}^{\prime
}\right)  dy_{1}$ as Lemma $\ref{fi}$ to get: 
\begin{equation}
\label{t24}
\begin{aligned}
\bar c_0\left(  1+{\mathcal O}_{{\mathcal C}^{0,\mu}(I_a)}\left(  \varepsilon^{\alpha}\right)  \right)  f_{\ve, 1}^{\prime\prime
}\left(  x_1\right)  +\left(  1+{\mathcal O}_{{\mathcal C}^{0,\mu}(I_a)}\left(  \varepsilon^{\alpha}\right)  \right)
h_\ve^{\prime\prime}\left(  x_1\right)  &=-2e^{2\sqrt
{2}\left(  f_{\ve, 1}+h_\ve\right)  \left(  x_1\right)  }\left(  1+{\mathcal O}_{{\mathcal C}^{0,\mu}(I_a)}\left(  \varepsilon
^{\alpha}\right)  \right) \\
&\quad +{\mathcal O}_{{\mathcal C}^{0,\mu}(I_a)}\left(  \varepsilon^{2+\alpha}\right),
\end{aligned}
\end{equation}
with some constant $\alpha>0$.
(For details we refer the reader to \cite{MR2557944}, where similar calculations can be found). This
relation then leads to the claim (\ref{t23}).

\noindent{\bf Step 2.}
Let us set $\hat \chi_{\ve, 1}\left(  x\right)  =p_{\ve, 1}\left(  x\right)  -q_{\ve, 1}\left(  x\right)$. Note that, possibly taking the interval  $I_a$ smaller we may assume that $\hat\chi_{\ve,1}$ and $\hat\chi_{\ve, 1}''$ are small in this interval. This follows by Lemma \ref{est at 0}, and  (\ref{q}), which holds for $q_{\ve, 1}''$ as well.  Now we will show the following local version of (\ref{first estimate}):
\begin{equation}
\label{first estimate A}
\begin{aligned}
\left\Vert\hat \chi_{\ve,1}\right\Vert _{{\mathcal C}^{0,\mu}_{\ve \tau}\left(I_a\right)}  &  \leq C\varepsilon^{\alpha},\\
\left\Vert \hat\chi_{\ve,1}^{\prime}\right\Vert _{{\mathcal C}^{0,\mu}_{\ve\tau}\left(I_a\right)}  &  \leq C\varepsilon^{1+\alpha
},\\
\left\Vert \hat\chi_{\ve,1}^{\prime\prime}\right\Vert _{{\mathcal C}^{0,\mu}_{\ve \tau}\left(I_a\right)}  &  \leq C\varepsilon
^{2+\alpha}.
\end{aligned}
\end{equation}
  
As long as  $\hat\chi_{\ve, 1}\left(  x\right)$ is small   we
get,
\begin{align*}
\bar c_0\hat\chi^{\prime\prime}_{\ve, 1}  &  =-4\sqrt{2}e^{2\sqrt{2}q_{\ve, 1}}%
\hat\chi_{\ve, 1}+O\left(  \chi_{\ve, 1}^{2}\right)  e^{-2\sqrt{2}q_{\ve, 1}}+\lambda_{\ve, 1}\left(  x\right) \\
&  =-4\sqrt{2}e^{2\sqrt{2}q_{\ve, 1}}\hat\chi_{\ve, 1}+\lambda_{\ve, 2}\left(
x\right),
\end{align*}
where
\begin{equation}
\label{lambda 2}
\lambda_{\ve, 2}=\lambda_{\ve, 1}+{\mathcal O}(\chi_{\ve, 1}^2)e^{-2\sqrt{2}q_{\ve, 1}}.
\end{equation}
Let $\{\varsigma_{\ve, i}$, $i=1,2\}$,  be a fundamental set of the linearized Toda equation:
\[
\bar c_0\varsigma_\ve^{\prime\prime}\left(  x\right)  =-4\sqrt{2}%
e^{\,2\sqrt{2}q_{\ve,1}\left(  x\right)  }\varsigma_{\ve}\left(  x\right)
,\]
with $\varsigma_{\ve, 1}$ odd, $\varsigma_{\ve, 2}$ even, $\varsigma_{\ve, 1}\left(
0\right)  =1$ and $\varsigma_{\ve 2}^{\prime}\left(  0\right)  =\varepsilon$ and
$\left\vert \varsigma_{\ve, i}^{\prime}\right\vert \leq C\varepsilon.$ Note that
although $\varsigma_{\ve, 1}$ and $\varsigma_{\ve,2}$ can be explicitly expressed in terms of $q_{\ve, 1}$ and its derivatives
their exact formulas are not needed here. What we should keep in mind is that  $\varsigma_{\ve, 1}$ is bounded and that $\varsigma_{\ve, 2}(x)\sim \ve |x|$. The variation of parameters formula
yields%
\begin{align*}
\hat\chi_{\ve, 1}\left(  x\right)   &  =\frac{\varsigma_{\ve, 2}\left(  x\right)  }{\varepsilon
}\int_{0}^{x}\varsigma_{\ve, 1}\left(  s\right)  \lambda_{\ve, 2}\left(  s\right)
ds-\frac{\varsigma_{\ve, 1}\left(  x\right)  }{\varepsilon}\int_{0}^{x}%
\varsigma_{\ve, 2}\left(  s\right)  \lambda_{\ve, 2}\left(  s\right)  ds\\
&  +\left(  p_{\ve, 1}\left(  0\right)  -q_{\ve, 1}\left(  0\right)  \right)  \varsigma
_{\ve, 1}\left(  x\right)
\end{align*}
and
\begin{align*}
\hat\chi_{\ve, 1}^{\prime}\left(  x\right)   &  =\frac{\varsigma_{\ve, 2}^{\prime}\left(
x\right)  }{\varepsilon}\int_{0}^{x}\varsigma_{\ve, 1}\left(  s\right)  \lambda
_{\ve, 2}\left(  s\right)  ds-\frac{\varsigma_{\ve, 1}^{\prime}\left(  x\right)
}{\varepsilon}\int_{0}^{x}\varsigma_{\ve, 2}\left(  s\right)  \lambda_{\ve, 2}\left(
s\right)  ds\\
&  +\left(  p_{\ve, 1}\left(  0\right)  -q_{\ve, 1}\left(  0\right)  \right)  \varsigma
_{\ve, 1}^{\prime}\left(  x\right)  .
\end{align*}
Then by Lemma \ref{est at 0}, estimate (\ref{lambda 1}),  
and  the estimate for $\lambda_{\ve, 2}$ in (\ref{lambda 2}),  one
has in the interval $I_a$:
\begin{equation}
\label{r1}
\begin{aligned}
\left\vert\hat \chi_{\ve, 1}\left(  x\right)  \right\vert  &  \leq\frac{1}{\varepsilon%
}\left\vert \varsigma_{\ve, 2}\left(  x\right)  \right\vert \int_{0}^{
x}\left\vert \varsigma_{\ve, 1}\left({s}\right)  \lambda
_{\ve, 2}\left({s}\right)  \right\vert ds
 +\frac{1}{\varepsilon}\left\vert \varsigma_{\ve,1}\left(  x\right)
\right\vert \int_{0}^{x}\left\vert \varsigma_{\ve, 2}\left(
{s}\right)  \lambda_{\ve, 2}\left({s}\right)
\right\vert ds\\
&\quad + |p_{\ve, 1}(0)  -q_{\ve, 1}(0)||\varsigma_{\ve, 1}(x)|\\
&  \leq C|a|^2(\|\lambda_{\ve,1}\|_{{\mathcal C}^{0,\mu}(I_a)}+\ve^2{\mathcal O}(\|\chi_{\ve,1}\|^2_{{\mathcal C}^{0,\mu}(I_a)})+C\varepsilon^{\alpha}
\\
&  < C|a|^2\ve^{2+\beta_1}+C\varepsilon^{\alpha_{1}}+C|a|^2\ve^2{\mathcal O}(\|\chi_{\ve,1}\|^2_{{\mathcal C}^{0,\mu}(I_a)}).
\end{aligned}
\end{equation}
From this the ${\mathcal C}^0(I_a)$ estimate for $\hat\chi_{\ve,1}$ follows immediately. It is also evident that we can take $a=\frac{|\log\ve|}{\ve}$.  Additionally, in this same interval, due to the fact that $|\varsigma'_{\ve, i}|\leq C\ve$, $i=1,2$, we 
have,
\begin{equation}
|\hat\chi_{\ve, 1}^{\prime}\left(  x\right)|  \leq 
C|a|^2\ve^{3+\beta_1}+C\varepsilon^{1+\alpha_{1}}+C|a|^2\ve^3{\mathcal O}(\|\chi_{\ve,1}\|^2_{{\mathcal C}^{0,\mu}(I_a)})
 \label{r2}%
\end{equation}
from which we obtain the ${\mathcal C}^1(I_a)$ estimate for $\hat\chi_{\ve, 1}$, with $a=|\log\ve|/\ve$. 
The argument leading finally to the full statement   (\ref{first estimate A}) is clear. 

\noindent{\bf Step 3.}
Next, we will prove that $\mathrm{dist}(\Gamma_{\ve, 1}, \widetilde\Gamma_{\ve, 1})\to 0$ as $\ve \to 0$.\red{(We recall here that $\wt\Gamma_{\ve, 1}=\{y=q_{1,\ve}(x)\}$).} For this we should  consider the function $p_{\ve, 1}$ outside the interval $I_{|\log\ve|/\ve}$ 

%
Since we have proven (\ref{lambda 1}) already, it suffices to show that 
\[
\mathrm{dist}\, \left(  \Gamma_{\ve, 1}\cap \{|x|>\frac{\left\vert \log\varepsilon\right\vert }{\varepsilon}\},\widetilde\Gamma_{1, \ve}\cap \{|x|>\frac{\left\vert \log\varepsilon\right\vert }{\varepsilon}\}\right)  \rightarrow 0.
\]

Let the asymptotic line of $u$ in the fourth quadrant to be $y=-\varepsilon
x-\mathcal{A}_\ve.$ Suppose $\left[  a_{\ve},+\infty\right)  $ is a maximal  subinterval of
$[\frac{\left\vert \log\varepsilon\right\vert }{\varepsilon},+\infty)$ where
$\left\vert f_{\ve, 1}\left(  x\right)  +\left(  \varepsilon x+\mathcal{A}_\ve\right)
\right\vert \leq1.$ This interval is not empty and possibly $a_\ve\geq |\log\varepsilon|/\ve$. We wish to show that in fact $a_\ve= |\log\varepsilon|/\ve$. To argue by contradiction let us assume that there exists a $\delta>0$, independent on $\ve$ and  such that 
\[
\sup_{x>a_{\ve}}|f\left(  x\right)  +\left(  \varepsilon
x+\mathcal{A}_\ve\right) |>\delta.
\]
Next,  we let $x_\ve\in \left[ a_{\ve},+\infty\right)  $ be such that 
\[
\left\vert f\left(  x_\ve\right)  +\left(  \varepsilon
x_\ve+\mathcal{A}_\ve\right)  \right\vert=\sup_{x>a_{\ve}}|f\left(  x\right)  +\left(  \varepsilon
x+\mathcal{A}_\ve\right)| 
\]
since 
\[
| f_{\ve, 1}\left(  x\right)  +\left(  \varepsilon x+\mathcal{A}_\ve\right)| \rightarrow0,\text{ as } x
\rightarrow \infty.
\]
therefore $x_\ve<\infty$ is well defined.

Consider the domain
\[
\Omega_{L}:=\left\{  \left(  x,y\right)\mid y<0,\quad x>x_{\ve},\quad y>\frac{x}{\varepsilon
}-L\right\}  .
\]
Here $L>\ve x_\ve$ is large and indeed we will finally let it go to $+\infty.$ We 
use the balancing formula in this domain and with  the vector field $X:=\left(
f_{\ve, 1}\left(  x_{\ve}\right)  -y,x-x_{\ve}\right)  $. This formula tells us
that
\[
\int_{\partial\Omega_{L}}\left\{  \left(  \frac{1}{2}\left\vert \nabla
u\right\vert ^{2}+F\left(  u\right)  \right)  X-X\left(  u\right)  \nabla
u\right\}  \cdot \nu dS=0.
\]
Let us estimate the relevant boundary integrals. First,
\begin{align*}
&  \int_{\partial\Omega_{L}\cap\left\{  y=0\right\}  }\left\{  \left(
\frac{1}{2}\left\vert \nabla u\right\vert ^{2}+F\left(  u\right)  \right)
X-X\left(  u\right)  \nabla u\right\}  \cdot \nu dS\\
&  =\int_{x_{\ve}}^{L/\ve}\left(  \frac{1}{2}u_{x}^{2}+F\left(  u\right)
\right)  \left(  x-x_{\ve}\right)  dx\to \int_{x_{\ve}}^{\infty}\left(  \frac{1}{2}u_{x}^{2}+F\left(  u\right)
\right)  \left(  x-x_{\ve}\right)  dx, \quad \mbox{as}\ L\to \infty.
\end{align*}
To estimate this integral let us recall that, by symmetry, we have for ${\tt x}=(x,y)$  $y\leq 0$, with some $\kappa>0$
\[
|u({\tt x})^2-1| +|\nabla u({\tt x})|\leq C e^{\,-\kappa \mathrm{dist}\,(\Gamma_{\ve, 1}, {\tt x})}.
\]
Since for $x\geq a_\ve$ the distance between $\Gamma_{\ve, 1}$ and the line $\ell_\ve=\{y=-(\ve x+{\mathcal A}_\ve)\}$ is bounded therefore we have as well
\[
|u({\tt x})^2-1| +|\nabla u({\tt x})|\leq C e^{\,-\kappa \mathrm{dist}\,(\ell_{\ve}, {\tt x})}, \quad {\tt x}=(x,y), \quad x\geq a_\ve.
\]
Now using this, and the fact that
\[
|\varepsilon x_{\ve}+\mathcal{A}_\ve|\geq |f_{\ve, 1}(0)|-1\geq C|\log\ve|,
\]
we deduce that as $\varepsilon\rightarrow 0,$%
\[
\int_{\partial\Omega_{L}\cap\left\{  y=0\right\}  }\left\{  \left(  \frac
{1}{2}\left\vert \nabla u\right\vert ^{2}+F\left(  u\right)  \right)
X-X\left(  u\right)  \nabla u\right\}  \cdot \nu dS\rightarrow 0.
\]
On the other hand, using the asymptotic behavior of $u$ in the lower half plane, we get:
\[
u=\bar u_\ve+o(1)e^{\,-\kappa \mathrm{dist}\,(\Gamma_{\ve, 1}, {\tt x})},\quad ({\mathbf x}_{\ve, 1}^*\bar u_\ve)(x_1,y_1)=H(y_1-h_\ve(x_1))\eta_1(x_1, y_1)+o(1) e^{\,-\sqrt{2} |y_1|}, 
\]
where $(x_1, y_1)$ are the Fermi coordinates of the point ${\tt x}$. Since on the line $\{x=x_\ve\}$ we have $X=(f_{\ve, 1}(x_\ve)-y, 0)$ therefore:

\[
\int_{\partial\Omega_{L}\cap\left\{  x=x_{\ve}\right\}  }\left\{  \left(
\frac{1}{2}\left\vert \nabla u\right\vert ^{2}+F\left(  u\right)  \right)
X-X\left(  u\right)  \nabla u\right\}  \cdot \nu dS=o\left(  1\right).
\]
Finally, we compute
\begin{align*}
  \left\vert \int_{\partial\Omega_{L}\cap\left\{  y=\frac{x}{\varepsilon
}-L\right\}  }\left\{  \left(  \frac{1}{2}\left\vert \nabla u\right\vert
^{2}+F\left(  u\right)  \right)  X-X\left(  u\right)  \nabla u\right\}  \cdot
\nu dS\right\vert 
  =\frac{\left\vert f_{\ve, 1}\left(  x_{\ve}\right)  +\varepsilon x_{\ve}+\mathcal{A}_\ve%
\right\vert }{\sqrt{1+\varepsilon^{2}}}+o\left(  1\right)  .
\end{align*}
Collecting all these estimates, we conclude
\[
\left\vert f_{\ve, 1}\left(  x_{\ve}\right)  +\varepsilon x_{\ve}+\mathcal{A}_\ve\right\vert
=o\left(  1\right)  .
\]
But then we must have $a_\ve =\frac{|\log \ve|}{\ve}$, and consequently, in the interval $[\frac{\left\vert \log\varepsilon\right\vert
}{\varepsilon},+\infty),$%
\[
\left\vert f_{\ve, 1}\left(  x\right)  +\varepsilon x+\mathcal{A}_\ve\right\vert =o\left(
1\right)  .
\]
This implies that outside this interval, $\Gamma_{\ve, 1}$ is close to a straight line, 
which combined with the estimates  (\ref{first estimate})  yields the desired result. Indeed, at $x^*=\frac{|\log\ve|}{\ve}$ we have $q_{\ve, 1}(x^*)=f_{\ve, 1}(x^*)+o(1)$, $q'_{\ve, 1}(x^*)=f'_{\ve, 1}(x^*)+o(1)$ and $q_{\ve, 1}(x)=-\ve|x|-\mathcal{\tilde A}_\ve+o(1)$, $x>x^*$. But then $\mathcal{A}_\ve=\mathcal{\tilde A}_\ve+o(1)$. This ends the proof of Step 3.

\noindent\noindent{\bf Step 4.}
At this point we can use what  we have just proven in Step 2 and Step 3 to get
\[
f_{\ve, 1}(x)=\frac{\sqrt{2}}{2}\log\frac{1}{\ve}+\ve|x|+\mathcal{O}_{\mathcal{C}^0(\R)}(1), \quad |x|\gg 1. 
\]
As a consequence
\begin{equation}
\label{ppp 1}
\|\exp\{-2\sqrt{2}|f_{\ve,1}|(1+\varDelta_\ve)\}\|_{\mathcal{C}_{\ve\tau}^{0,\mu}(\R)}\leq C\ve^2,
\end{equation}
from which we find:
\[
\|\phi\|_{\mathcal{C}_{\ve\tau}^{0,\mu}(\R^2)}\leq C\ve^2, \quad\|f''_{\ve, 1}\|_{\mathcal{C}_{\ve\tau}^{0,\mu}(\R)}\leq C\ve^2, \quad \|h_\ve\|_{\mathcal{C}_{\ve\tau}^{0,\mu}(\R)}\leq C\ve^2.
\]
Using this information to  calculate $\int_{\mathbb{R}%
}\mathbf{x}_{1}^{\ast}\left(  E\left(  \bar{u}\right)  \rho_{1}H_{1}^{\prime
}\right)  dy_{1}$ as Lemma $\ref{fi}$ we obtain: 
\begin{equation}
\label{t24}
\begin{aligned}
\bar c_0\left(  1+{\mathcal O}_{{\mathcal C}^{0,\mu}_{\ve\tau}(\R)}\left(  \varepsilon^{\alpha}\right)  \right)  f_{\ve, 1}^{\prime\prime
}\left(  x_1\right)  +\left(  1+{\mathcal O}_{{\mathcal C}^{0,\mu}_{\ve\tau}(\R)}\left(  \varepsilon^{\alpha}\right)  \right)
h_\ve^{\prime\prime}\left(  x_1\right) & =-2e^{2\sqrt
{2}\left(  f_{\ve, 1}+h_\ve\right)  \left(  x_1\right)  }\left(  1+{\mathcal O}_{{\mathcal C}^{0,\mu}_{\ve\tau}(\R)}\left(  \varepsilon
^{\alpha}\right)  \right) \\
&\quad  +{\mathcal O}_{{\mathcal C}^{0,\mu}_{\ve\tau}(\R)}\left(  \varepsilon^{2+\alpha}\right),
\end{aligned}
\end{equation}
with some constant $\alpha>0$. At this point we repeat the argument in Step 2 to obtain (\ref{first estimate}). This ends the proof of Proposition \ref{prop gamma}. Now, the assertion of Proposition \ref{estim hat phi} is contained in (\ref{ppp 1}).
\end{proof}

\red{
So far we have proven estimates for $f_{\ve, i}$, $h_\ve$ and  $\phi$, respectively   in the weighed $\mathcal{C}^{2,\mu}(\R)$ and  $\mathcal{C}^{2,\mu}(\R^2)$norms. However these functions are a priori more regular and  one can bootstrap the argument argument above differentiating the relevant relations. The details are tedious but standard. As a result one can estimate $3$rd derivatives of these functions. For our purpose it is important to estimate the derivatives in $x_i$.  We summarize this in the following:  
\begin{corollary}\label{higher regularity}
The following estimates are satisfied:
\[
\|f_{\ve,i}'''\|_{\mathcal{C}^0(\R)}+\|h'''_\ve\|_{\mathcal{C}^0(\R)}\leq C\ve^{2+\alpha}.
\]
\end{corollary}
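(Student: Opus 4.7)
The plan is to bootstrap the estimates of Proposition \ref{prop gamma} by differentiating once more the three equations that characterize $f_{\ve,i}$, $h_\ve$ and $\phi$: namely, the non-homogeneous Toda equation (\ref{t23}) for $p_{\ve,1}=f_{\ve,1}+h_\ve$, the nodal set relation $u(x,f_{\ve,1}(x))=0$, and the elliptic equation $L_{\bar u_\ve}\phi = E(\bar u_\ve)-P(\phi)$. Since $F$ is smooth and $u$ is uniformly bounded, standard elliptic regularity provides uniform $\mathcal{C}^{k,\mu}_{\mathrm{loc}}(\R^2)$ bounds on $u$ for every $k$, which supplies the fuel for the bootstrap.

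First I would revisit the derivation of $\lambda_{\ve,1}$ in (\ref{t23}) and verify that in fact $\|\lambda'_{\ve,1}\|_{\mathcal{C}^0(\R)}\leq C\ve^{2+\alpha}$. Indeed $\lambda_{\ve,1}$ is a sum of integrals of the form $\int \mathbf{x}^*_{\ve,1}(\,\cdot\,)\rho_{\ve,1}H'_{\ve,1}\,dy_1$ whose integrands depend on $x_1$ through $f_{\ve,1}(x_1)$, $f'_{\ve,1}(x_1)$, $f''_{\ve,1}(x_1)$, $h_\ve$, $h'_\ve$, $h''_\ve$ and the Fermi Jacobian, each of which has a $\mathcal{C}^{0,\mu}_{\ve\tau}$ bound from Proposition \ref{prop gamma} together with an elliptic $\mathcal{C}^{1,\mu}$ bound on $\partial_{x_1}\phi$. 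Differentiating the Toda-like equation (\ref{t23}) once gives
\[
\bar c_0\,p'''_{\ve,1}=-4\sqrt{2}\,e^{\,2\sqrt{2}\,p_{\ve,1}}\,p'_{\ve,1}+\lambda'_{\ve,1},
\]
and using $e^{\,2\sqrt{2}\,p_{\ve,1}}\leq C\ve^2$ together with $|p'_{\ve,1}|\leq C\ve$, we conclude
\[
\|p'''_{\ve,1}\|_{\mathcal{C}^0(\R)}=\|f'''_{\ve,1}+h'''_\ve\|_{\mathcal{C}^0(\R)}\leq C\ve^{2+\alpha}.
\]

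To decouple $f'''_{\ve,1}$ from $h'''_\ve$, I would use the nodal set identity $u(x,f_{\ve,1}(x))=0$. Differentiating three times yields
\[
u_y\,f'''_{\ve,1}=-\bigl(u_{xxx}+3u_{xxy}f'_{\ve,1}+3u_{xyy}(f'_{\ve,1})^2+u_{yyy}(f'_{\ve,1})^3+3(u_{xy}+u_{yy}f'_{\ve,1})f''_{\ve,1}\bigr),
\]
evaluated on $\Gamma_{\ve,1}$. Writing $u=\bar u_\ve+\phi$ and using the explicit form of $\bar u_\ve$, the contributions from $H_{\ve,2}+1$ are of size $e^{\,-2\sqrt 2|f_{\ve,1}|}=\mathcal{O}(\ve^2)$, while the remaining terms either have the prefactor $f''_{\ve,1}=\mathcal{O}(\ve^{2+\alpha})$ or involve $\mathcal{C}^3$ norms of $\phi$. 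Applying elliptic Schauder theory once more to the equation $L_{\bar u_\ve}\phi=E(\bar u_\ve)-P(\phi)$, whose right-hand side is now bounded in $\mathcal{C}^{1,\mu}_{\ve\tau}$ by $C\ve^{2+\alpha}$ (by differentiating the calculation in Lemma \ref{Eu}), yields $\|\phi\|_{\mathcal{C}^{3,\mu}(\R^2)}\leq C\ve^{2+\alpha}$. Since $-u_y\geq c>0$ on $\Gamma_{\ve,1}$ by the monotonicity (\ref{even}), we solve for $\|f'''_{\ve,1}\|_{\mathcal{C}^0(\R)}\leq C\ve^{2+\alpha}$, and finally $h'''_\ve=p'''_{\ve,1}-f'''_{\ve,1}$ inherits the same bound.

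The main obstacle is not conceptual but combinatorial: every differentiation of $E(\bar u_\ve)$, of the Fermi change of coordinates, or of the orthogonality condition defining $h_\ve$, produces a cascade of terms involving derivatives of $f_{\ve,1}$ and $h_\ve$ one order higher than what has already been controlled. What makes the bootstrap close is that in each such term either there is an exponentially small prefactor (of type $e^{\,2\sqrt 2 p_{\ve,1}}$ or $H_{\ve,2}+1$), or the highest derivative appears multiplied by a factor $\mathcal{O}(\ve^\alpha)$, so that the resulting linear system for $(f'''_{\ve,1},h'''_\ve)$ is diagonally dominant. Verifying the cancellations term by term follows the same pattern as in Lemma \ref{Eu} and Lemma \ref{fi}, and this is why the authors justifiably describe the details as tedious but standard.
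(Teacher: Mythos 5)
Your overall strategy is the correct one and it is exactly what the paper gestures at: differentiate the non-homogeneous Toda equation (\ref{t23}), the nodal set relation, and the elliptic equation for $\phi$ once more, then close the resulting coupled system by exploiting that each appearance of a third derivative other than on the diagonal carries a small prefactor. The paper itself gives no details here, so you are filling in a genuine gap, and the first and last steps (getting $p'''_{\ve,1}=f'''_{\ve,1}+h'''_\ve=O(\ve^{2+\alpha})$ from the differentiated Toda equation, and invoking Schauder plus the cascade/diagonal-dominance argument) are sound.

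The decoupling step, however, is formulated incorrectly. After writing $\partial_x^3\big[u(x,f_{\ve,1}(x))\big]=0$ and isolating $u_y f'''_{\ve,1}$, you assert that the remaining terms are controlled by $H_{\ve,2}+1$, by $f''_{\ve,1}$, or by $\mathcal{C}^3$ norms of $\phi$. This is not the case: $u_{xxx}$ evaluated on $\Gamma_{\ve,1}$ contains the contribution $\partial_x^3 H_{\ve,1}$, and the chain rule applied to $H\big(y_1-h_\ve(x_1)\big)$ produces a term of the form $-H'(-h_\ve)\,h'''_\ve\,(\partial_x x_1)^3$, whose size is comparable to $u_y\approx H'(-h_\ve)/\sqrt{1+(f'_{\ve,1})^2}$, with no small prefactor. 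There are also $f'''_{\ve,1}$ terms hidden in the derivatives of the Fermi coordinate $y_1(x,y)$. Thus you cannot "solve for $f'''_{\ve,1}$" from this identity alone: it gives essentially $f'''_{\ve,1}-h'''_\ve = O(\ve^{2+\alpha})+O(\ve^2)(|f'''_{\ve,1}|+|h'''_\ve|)$, not $f'''_{\ve,1}=O(\ve^{2+\alpha})$. The clean way to decouple, consistent with the paper's Lemma \ref{hf1}, is to differentiate (\ref{hfu0}) restricted to $y_1=0$, i.e. the identity $0 = H(-h_\ve(x)) - (H_{\ve,2}+1)(x,f_{\ve,1}(x)) + \phi(x,f_{\ve,1}(x))$. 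Three $x$-derivatives then give $h'''_\ve\,H'(-h_\ve)$ on one side, while $f'''_{\ve,1}$ enters the other side only through the small coefficients $\phi_y=O(\ve^2)$ and $(H_{\ve,2}+1)_y=O(\ve^2)$; combined with the Toda relation $f'''_{\ve,1}+h'''_\ve=O(\ve^{2+\alpha})$ this is a nondegenerate $2\times 2$ linear system, and the bootstrap closes. Finally, a minor point: by (\ref{even}) one has $u_y>0$ on $\Gamma_{\ve,1}$ (which lies in $\{y<0\}$), not $-u_y>0$; only the lower bound away from zero is used, so this does not affect the argument, but the sign should be corrected.
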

}

\section{Uniqueness of solutions with almost parallel nodal lines}\label{sec teo uniqueness}

\subsection{Parametrization of the family of solutions of (\ref{AC})  by the trajectories of the Toda system} \label{toda aprox} 

Let us consider the  curve $\{y=q_{\ve,i}(x)\}=\widetilde \Gamma_{\ve, i}$. When $i=1$ it is contained in the lower half plane, when $i=2$ is is contained in the upper half plane and we have actually $q_{\ve, 1}(x)=-q_{\ve, 2}(x)$. With this curve we will associate the Fermi coordinates $(\wt {x}_{i}, \wt {y}_{i})$:
\[
{\tt x}=(\wt x_i, q_{\ve, i}(\wt {x_i}))+\wt y_{i}\wt n_{\ve, i}(\wt x_i), \quad \wt n_{\ve, i}(x)=\frac{(-1)^i(q_{\ve, i}'(x), -1)}{\sqrt{1+q_{\ve, i}'(x)^2}}.
\] 
The change of variables $(\widetilde x_i, \widetilde y_i)\mapsto {\tt x }=(x,y)$ is a diffeomorphism in a neighborhood $\widetilde {\mathcal O}_{i}$ of $\widetilde\Gamma_{\ve, i}$.  We denote this  diffeomorphism by $\wt {\tt x}_{\ve, i}$, so that 
\[
\wt{\tt x}_{\ve, i}(\wt x_i, \wt y_i)={\tt x}\in \wt{\mathcal O}_{i}.
\]
For any function $w\colon \wt{\mathcal O}_{i}\to \R$ by $\wt{\tt x}^*_{\ve,i} w$ we denote its pullback by $\wt{\tt x}_{\ve, i}$:
 \[
(\wt{\tt x}^*_{\ve,i} w)(\wt x_i, \wt y_i)=(w\circ \wt{\tt x}_{\ve, i}) (\wt x_i, \wt y_i).
\] 

To describe more precisely the neighborhood $\wt {\mathcal O}_{i}$ we define the projection function $\wt \pi_{\ve, i}\colon \R^2\to \wt\Gamma_{\ve, i}$:
\[
\wt \pi_{\ve,1}({\tt x})=(\wt \pi_{\ve, 1}^x({\tt x}),\wt \pi_{\ve, 1}^y({\tt x)}):=(\wt x_i, q_{\ve, i}(\wt x_i)), \quad \mbox{whenever}\  {\tt x}=(\wt x_i, q_{\ve, i}(\wt x_1))+ \wt y_i n_{\ve, i}(\wt x_i), \quad \wt y_i=\mathrm{dist}\,({\tt x}, \wt \Gamma_{\ve, i}).
\] 

Using basic  properties (linear growth, scaling) of the trajectories of the solutions of the Toda system it is not hard to show that there exits a positive constant $c$ such that  we can define 
\[
 \widetilde{\mathcal O}_{i}:=\{{\tt x}\mid \|\wt \pi_{\ve, i}({\tt x})-{\tt x}\| \leq c\log \ve^{-1}+ c\ve \log (\cosh \wt\pi_{\ve, i}^x({\tt x}))\},
\]
 c.f.  \cite{MR2557944}. In particular, we can chose $c$ large, so that  $\widetilde\Gamma_{\ve, i}\in \widetilde {\mathcal O}_{j}$, $i, j=1,2$.   For future reference we set
 \[
 \wt{\mathrm d}_\ve(x)=c\log \ve^{-1}+ c\ve \log (\cosh {x}).
 \]
 
 \red{Using the results of the previous section we can easily derive relations between the Fermi coordinates of $\wt\Gamma_{\ve, i}$ and those of $\Gamma_{\ve, i}$. Indeed, using the explicit formulas it is not hard to prove that there exist constants $\wt b_\ve=\mathcal{O}(\ve^{1+\alpha})$, $\wt j_\ve=\mathcal{O}(j_\ve)$ such that  we have:
 \begin{equation}
 x_i-\wt x_i=\wt b_\ve+\wt y_i\mathcal{O}_{\mathcal{C}^{2,\mu}_{\ve\tau}(\R)}(\ve^\alpha)+\mathcal{O}_{\mathcal{C}^{2,\mu}_{\ve\tau}(\R)}(\ve^\alpha), \quad  y_i-\wt y_i=\wt j_\ve +\wt y_i\mathcal{O}_{\mathcal{C}^{2,\mu}_{\ve\tau}(\R)}(\ve^\alpha)+\mathcal{O}_{\mathcal{C}^{2,\mu}_{\ve\tau}(\R)}(\ve^\alpha).
 \label{rel var}
 \end{equation}
In fact these relations can be differentiated and the derivates "gain" a power of $\ve$. To see this we define the following diffeomorphism $\Phi_{\ve,i}=\wt{\tt x}_{\ve, i}\circ{\tt x}_{\ve, i}^{-1}$. Then we have:
\begin{equation}
\label{62}
D\Phi_{\ve, i}={\mathrm{Id}}_{2\times 2}+\mathcal{O}_{\mathcal{C}^{1,\mu}(\R)}(\ve^{1+\alpha}), \quad D^2\Phi_{\ve, i}=\mathcal{O}_{\mathcal{C}^{0,\mu}(\R)}(\ve^{2+\alpha}).
\end{equation}
This is proven by directly exploring the relations between the Fermi coordinates of $\Gamma_{\ve, i}$ and of $\wt\Gamma_{\ve, i}$.  Finally, we need to have some information about the relation between $(\wt x_1, \wt y_1)$ and $(\wt x_2, \wt y_2)$.  Again, it is not hard to see that we have:
\begin{equation}
\label{63}
|\wt x_1-\wt x_2|\leq C\ve|\wt y_1|+o(1), \quad |\wt y_1-\wt y_2+2 q_{\ve, 1}(0)|\leq C\ve^2 |\wt y_1|+o(1).
\end{equation}
 }
With these preparations, we would like to write locally any solution  $u$, with $\tan\theta(u)=\ve$ small, in  Fermi
coordinates with respect to $\widetilde\Gamma_{\ve, i}$. To this end we will construct a suitable approximation of $u$ in $\wt {\mathcal O}_{i}$ based on the fact that the true solution is locally close to the heteroclinic. 
By symmetry we may focus on the case $i=1$, namely consider the lower half plane. We chose a solution $u$, whose nodal line $\Gamma_{\ve, 1}$ in the lower half plane is a graph of $y=f_{\ve, 1}(x)$ and chose the solution of the Toda system $q_{\ve, 1}(x)$ such that the assertions of the Proposition \ref{prop gamma} are satisfied. 
We let $\wt \eta$ to be a smooth  cut off function  equal to $1$ in $\wt {\mathcal O}_{1}\cap\{\mathrm{dist}({\tt x}, \partial\wt {\mathcal O}_1)>1\}$ and equal to $0$    in $\R^2\setminus \wt{\mathcal O}_{1}$. A reasonable ansatz  for an approximate solution is built by defining   the function $\wt H_{\ve, 1}$:
\[
(\wt {\tt x}_{\ve, 1}^*\wt  {H}_{\ve, 1})\left( \wt  x_1,\wt y_1\right)  :=(\wt{\tt x}_{\ve, 1}^*\wt{\eta})\left(\wt x_1, \wt y_1\right)
H\left(  \wt y_{1}-\wt g_\ve \left(  \wt x_{1}\right)  \right)  +\left(  1-(\wt{\tt x}_{\ve, 1}^*{\wt\eta})\left(\wt x_1, \wt y_1\right)  \right)  \frac{H\left(  \wt y_{1}-\wt g_\ve\left(
\wt x_{1}\right)  \right)  }{\left\vert H\left(  \wt y_{1}-\wt {g}_\ve\left(
\wt x_{1}\right)  \right)  \right\vert }, 
\]
which is extended to the whole $\R^2$ by $\pm 1$, setting  
 $\wt {H}_{\ve, 2}\left(  x,y\right)  =-\wt {H}_{\ve, 1}\left(  x,-y\right)$,  and finally  defining
\begin{equation}
\wt {{u}}_\ve:=\wt{H}_{\ve, 1}-\wt{H}_{\ve, 2}-1. 
\label{wtueps}
\end{equation}
Note that the function $\wt g_\ve$ has not been specified so far. It turns out that in  order to have a good approximation of $u$ by $\wt u$ we should impose the following orthogonality condition:
\begin{equation}
\int_{\mathbb{R}}\left [\wt {\tt x}_{\ve, i}^{\ast}(  u-\wt{u}_\ve)
\wt \rho_{\ve, i}\wt {H}_{\ve, i}^{\prime}\right](\wt x_i, \wt y_i)\,d\wt y_{i}=0,\quad \forall\, \wt x_i, \quad i=1,2.
\label{wt orto cond}
\end{equation}
and smooth cutoff functions  $\wt\rho_{\ve, i}$ are defined through a smooth cutoff function $\wt\rho$ by:
\[
(\wt{\tt x}_{\ve, i}^* \wt\rho_{\ve, i})(\wt x_i, \wt y_i)=\wt \rho(\wt x_i, \wt y_i-(-1)^{i+1}\wt {g}_\ve(\wt x_i)),
\]
where 
\[
\wt \rho(s,t)=\begin{cases}
1, \quad |t|\leq \frac{1}{2}\wt{\mathrm{d}}_\ve (s),\\
0<\rho<1, \quad \frac{1}{2} \wt{\mathrm{d}}_\ve(s)<t<\frac{3}{4}\wt {\mathrm{d}}_\ve(s),\\
0 \quad\mbox{othewise}.
\end{cases}
\]
%
%
c.f. definition of $\hat\rho_{\ve, i}$ above, while $\wt {H}_{\ve, i}^{\prime}$ is defined by
\begin{align*}
\left(\wt {\tt x}^*_{\ve, i}\wt {H}_{i}^{\prime}\right) \left(  \wt x_{i}, \wt y_{i}\right)  &  =H^{\prime}\left(
\wt y_{i}-\left(  -1\right)  ^{i}\wt {g}_\ve\left(  \wt x_{i}\right)  \right)  .
\end{align*}
Note that because of the definition of the function $\wt {\mathrm {d}}_\ve$ we can assume that $|\nabla \wt \rho_{\ve, i}({\tt x})|=\mathcal{O}(\frac{1}{\ve|{\tt x}|})$, $\ve|\tt x|\gg 1$ with similar estimates for higher order derivatives. 
Changing variables $X_i=\wt x_i$, $Y_i= \wt y_i-(-1)^i\wt g_\ve(\wt x_i)$ one gets the following, equivalent form of (\ref{wt orto cond}):
\begin{equation}
\label{l1}
\int_{\mathbb{R}}\wt\rho\left(X_i, Y_{i}\right)  H^{\prime}\left(  Y_{i}\right)
\left(\wt{\tt x}_{\ve, i}^{\ast}(  u-{\wt{u}}_\ve) \right) \left(
X_{i},Y_{i}+\wt{g}_\ve \left(  X_{i}\right)  \right)  dY_{i}=0,\quad \forall X_i, \quad i=1,2.
\end{equation}
To show  the existence of the function $\wt g_\ve$ one can use the argument similar to the one in Lemma \ref{exists h}. However, since the set $\{y=q_{\ve, i}(x)\}$ does not coincide with the   nodal set of the  solution the function $\wt g_\ve$ does not decay exponentially.   To determine the behavior of the function $\wt g_\ve$ more precisely we need the following:
\red{\begin{lemma}
\label{h1}There exist a ${\tau}>0$ and a constant $\wt {j}_{\ve}$\bigskip\ such that
$\left\vert \wt j_{\ve}\right\vert \leq C\varepsilon^{\alpha}$, and the function
$\wt{h}_\ve\left(  x\right)  :=\wt {g}_\ve\left(  x\right)  -\wt j_{\ve}$
satisfies%
\begin{equation}
\label{2ndest}
\begin{aligned}
\left\Vert \wt {h}_\ve\right\Vert _{C_{\varepsilon{\tau}}^{0,
\mu}\left(
\R\right)  } &  \leq C\varepsilon^{\alpha},\\
\left\Vert \wt {h}_\ve^{\prime}\right\Vert _{C_{\varepsilon{\tau}}%
^{0,\mu}\left(  \R\right)  } &  \leq C\varepsilon^{1+\alpha},\\
\left\Vert \wt{h}_\ve^{\prime\prime}\right\Vert _{C_{\varepsilon{\tau}%
}^{0,\mu}\left(  \R\right)  } &  \leq C\varepsilon^{2+\alpha}.
\end{aligned}
\end{equation}

\end{lemma}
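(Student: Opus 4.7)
The existence of $\wt g_\ve$ satisfying the orthogonality condition (\ref{wt orto cond}) can be established by a fixed point argument that is structurally identical to the one used in Lemma \ref{exists h}. One replaces $\wt g_\ve$ in the definitions of $\wt H_{\ve,1}$ and $\wt H_{\ve,2}$ by a priori independent functions $\wt g_{\ve,1}$ and $\wt g_{\ve,2}$, solves for $\wt g_{\ve,1}$ from the $i=1$ orthogonality via the implicit function theorem (the monotonicity coming from $\int \wt\rho\, H'(Y)^2\,dY > 0$ at leading order), and then exploits the even symmetry of the problem to extract a symmetric fixed point $\wt g_\ve$. The only novelty is that $u$ is not close to the heteroclinic centered on $\wt\Gamma_{\ve,1}$, so a priori $\wt g_\ve$ is not small: it reflects the displacement between $\wt\Gamma_{\ve,1}$ and the actual nodal line of $u$, and is at most of size $\mathcal{O}(\varepsilon^\alpha)$ in $\mathcal{C}^0(\R)$.

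The heart of the lemma is to extract from this displacement a bounded constant $\wt j_\ve$ and a remainder $\wt h_\ve$ with the weighted decay (\ref{2ndest}). The strategy is to compare (\ref{l1}) with the analogous condition (\ref{orto h}) relative to the Fermi coordinates of $\Gamma_{\ve,1}$, which we already control via Proposition \ref{prop gamma}. Set $\Phi_{\ve,1}=\wt{\tt x}_{\ve,1}\circ{\tt x}_{\ve,1}^{-1}$; by (\ref{rel var})--(\ref{62}) this change of Fermi coordinates is an approximate translation by $(\wt b_\ve,\wt j_\ve)$, with derivative corrections $\mathcal{O}(\varepsilon^{1+\alpha})$. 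Near $\Gamma_{\ve,1}$ we have $u=H(y_1-h_\ve(x_1))+\phi$ with $\|\phi\|_{\mathcal{C}^{2,\mu}_{\ve\tau}}=\mathcal{O}(\varepsilon^2)$ by Proposition \ref{estim hat phi}; pulling back through $\Phi_{\ve,1}$, this profile reads
\[
u = H\bigl(\wt y_1-\Psi_\ve(\wt x_1)\bigr)+\wt\phi,\qquad \Psi_\ve(\wt x_1) = h_\ve(\wt x_1)+f_{\ve,1}(\wt x_1)-q_{\ve,1}(\wt x_1)-\wt j_\ve+\mathrm{err},
\]
where the weighted norm of $\mathrm{err}$ is $\mathcal{O}(\varepsilon^{1+\alpha})$ and $\|\wt\phi\|_{\mathcal{C}^{2,\mu}_{\ve\tau}}=\mathcal{O}(\varepsilon^2)$. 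By Proposition \ref{prop gamma}, $h_\ve+f_{\ve,1}-q_{\ve,1}=\chi_{\ve,1}-j_\ve$, hence
\[
\Psi_\ve = -\,(j_\ve+\wt j_\ve)+\chi_{\ve,1}+\mathrm{err}.
\]
Choosing $\wt j_\ve$ so that the constant part of $\Psi_\ve$ vanishes gives $\wt j_\ve=-j_\ve+\mathcal{O}(\varepsilon^{1+\alpha})$, so indeed $|\wt j_\ve|\le C\varepsilon^\alpha$, and the remaining part of $\Psi_\ve$ inherits the weighted bounds of $\chi_{\ve,1}$.

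It remains to show that the orthogonality (\ref{l1}) forces $\wt g_\ve=\wt j_\ve+\wt h_\ve$ with $\wt h_\ve$ satisfying exactly (\ref{2ndest}). Substituting the expression for $u$ above into (\ref{l1}), performing the shift $Y_1\mapsto Y_1+\Psi_\ve(X_1)-\wt g_\ve(X_1)$, and expanding to first order yields
\[
\bigl(\Psi_\ve-\wt g_\ve\bigr)(X_1)\int_{\R}\wt\rho(X_1,Y_1)\bigl(H'(Y_1)\bigr)^2\,dY_1 = \mathcal{O}_{\mathcal{C}^{0,\mu}_{\ve\tau}}(\varepsilon^{2})+\text{quadratic in }(\Psi_\ve-\wt g_\ve),
\]
and since the coefficient on the left is bounded below, we obtain $\wt g_\ve-\Psi_\ve=\mathcal{O}(\varepsilon^{2})$ in $\mathcal{C}^{0,\mu}_{\ve\tau}(\R)$. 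Together with the expression for $\Psi_\ve$ above, this establishes the $\mathcal{C}^{0,\mu}_{\ve\tau}$ estimate for $\wt h_\ve=\wt g_\ve-\wt j_\ve$. The estimates for $\wt h_\ve'$ and $\wt h_\ve''$ are obtained by differentiating the orthogonality condition one and two times in $X_1$, using the higher-order regularity of Corollary \ref{higher regularity} to control the derivatives of $f_{\ve,1}$, $h_\ve$, $\chi_{\ve,1}$ and $q_{\ve,1}$; each differentiation adds a factor of $\varepsilon$ because all functions involved depend on $X_1$ through the slow variable $\varepsilon X_1$ up to exponentially decaying remainders.

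The main technical obstacle is the bookkeeping of the change-of-Fermi-coordinate errors. The pullback through $\Phi_{\ve,1}$ transforms $\partial_{x_1}$, $\partial_{y_1}$ derivatives into linear combinations of $\partial_{\wt x_1}$, $\partial_{\wt y_1}$ with coefficients $\mathrm{Id}+\mathcal{O}(\varepsilon^{1+\alpha})$, and one must verify that the $\wt y_1\cdot\mathcal{O}(\varepsilon^\alpha)$ correction in (\ref{rel var}) is absorbed by the exponential decay of $H'$ inside the integrand. These are the two key mechanisms that allow the $\mathcal{C}^{2,\mu}_{\ve\tau}(\R)$ estimates of Proposition \ref{prop gamma} to pass cleanly to $\wt h_\ve$ without loss of powers of $\varepsilon$.
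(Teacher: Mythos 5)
Your proposal follows essentially the same strategy as the paper: rewrite the orthogonality condition (\ref{wt orto cond}) via the Fermi-coordinate change $\Phi_{\ve,1}$, express the near-wall profile of $u$ in the tilde-coordinates using Propositions \ref{prop gamma} and \ref{estim hat phi} so that the leading balance determines $\wt g_\ve$ up to the constant $\wt j_\ve$ and a weighted-decaying remainder, and then differentiate the scalar relation to extract the higher-order bounds with the extra powers of $\ve$ coming from the slow dependence. This is precisely what the paper does in its decomposition of (\ref{rel xxx}) into the terms $A_1$, $A_2$, $B$, with the coordinate-change diffeomorphism supplying the constant offset and Propositions \ref{estim hat phi}, \ref{prop gamma} controlling the error.
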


\begin{proof}
As we know,
$\wt{g}_\ve$ is determined by (\ref{wt orto cond}).  By symmetry it suffices to consider the case $i=1$ in (\ref{wt orto cond}). We write:
\begin{equation}
\label{rel xxx}
0=\int_{\mathbb{R}}\left [\wt {\tt x}_{\ve, i}^{\ast}(  u-\wt{u}_\ve)
\wt \rho_{\ve, 1}\wt {H}_{\ve, i}^{\prime}\right]\,d\wt y_{1}=\underbrace{\int_{\mathbb{R}}\left [\wt {\tt x}_{\ve, 1}^{\ast}( \bar u_\ve-\wt{u}_\ve)
\wt \rho_{\ve, i}\wt {H}_{\ve, 1}^{\prime}\right]\,d\wt y_{1}}_{A}+\underbrace{\int_{\mathbb{R}}\left [\wt {\tt x}_{\ve, 1}^{\ast}(  u-\bar{u}_\ve)
\wt \rho_{\ve, 1}\wt {H}_{\ve, 1}^{\prime}\right]\,d\wt y_{1}}_{B}
\end{equation}
With some abuse of notation we will write the first term in the form:
\begin{align*}
A&=\underbrace{\int_\R\wt \rho H'(\wt(y_1)[H(\wt y_1-\wt g_\ve(\wt x_1))-H(y_1-h_\ve(x_1))]\,d \wt y_1}_{A_1}\\
&\quad+\underbrace{\int_\R \wt\rho H'(\wt y_1)[H(y_2+h_\ve(x_2))-H(\wt y_2+\wt g_\ve(\wt x_2))]\,d\wt y_1}_{A_2} + A_3.
\end{align*}
The last term denoted by $A_3$ is negligible. We will now calculate $A_1$ and $A_2$. 

To compute  $A_1$ we introduce a diffeomorphism $\Phi_{\ve,1}={\tt x}_{\ve, 1}\circ\wt {\tt x}_{\ve, 1}$. Then we  can write:
\begin{equation}
\label{A1}
\begin{aligned}
A_1&=\int_\R\wt \rho\big(H'(\wt y_1)\big)^2\underbrace{[\Phi^{-1}_{\ve,1, 2}(\wt x_1, \wt y_1)-\wt y_1-h_\ve\circ\Phi_{\ve, 1,1}^{-1}(\wt x_1, \wt y_1)+\wt g_\ve]}_{\wt \varDelta_{\ve,1}}\,d\wt y_1+\int_\R \wt \rho \mathcal{O}(e^{\,-\sigma|\wt y_1|})\wt\varDelta_{\ve,1}^2(\wt x_1, \wt y_1)\,d\wt y_1\\
&=\wt c (\wt j_\ve-h_\ve(\wt x_1)+\wt g_\ve(\wt x_1))+\mathcal{O}(|\wt j_\ve-h_\ve(\wt x_1)+\wt g_\ve(\wt x_1)|^2)+\mathcal{O}_{\mathcal{C}^{0,\mu}_{\ve\tau}(\R)}(\ve^\alpha).
\end{aligned}
\end{equation}
Similarly we calculate:
\begin{equation}
\label{A2}
\begin{aligned}
A_2&=\int_\R\wt \rho H'(\wt y_1)H'(\wt y_2)\underbrace{[\Phi_{\ve, 2,2}^{-1}(\wt x_2, \wt y_2)-\wt y_2+h_\ve\circ\Phi_{\ve, 2,1}^{-1}(\wt x_2, \wt y_2)-\wt g_\ve]}_{\wt\varDelta_{\ve, 2}}\,d\wt y_1+\int_\R \wt \rho \mathcal{O}(e^{\,-\sigma|\wt y_1|})\wt\varDelta_{\ve,2}^2(\wt x_1, \wt y_1)\,d\wt y_1\\
&=\mathcal{O}_{\mathcal{C}^{0,\mu}_{\ve\tau}(\R)}(\ve^\alpha).
\end{aligned}
\end{equation}
Estimates (\ref{A1})--(\ref{A2}) follow from formulas (\ref{rel var})--(\ref{63}). Finally, it can be shown that the leading order term in $B$ is the following
\begin{equation}
\label{B}
\begin{aligned}
\|\wt B\|_{\mathcal{C}^{0}_{\ve\tau}(\R)}&=\left\|\int_\R \wt \rho H'(\wt y_1)[\wt {\tt x}_{\ve, 1}(u-\bar u_\ve)]\,d\wt y_1+\int_\R \wt \rho H'(\wt y_1)[\wt {\tt x}^*_{\ve, 1}\phi]\,d\wt y_1\right\|_{\mathcal{C}^{0}_{\ve\tau}(\R)}\\
&=\left\|\int_\R\wt\rho\circ\Phi_{\ve, 1}(x_1, y_1)\big(H'\circ \Phi_{\ve, 1,2}(x_1,y_1)\big)[\wt {\tt x}^*_{\ve, 1}\phi]\circ\Phi_{\ve, 1}(x_1, y_1)\,d\Phi_{\ve, 1,2}(x_1, y_1)\,dy_1\right\|_{\mathcal{C}^{0}_{\ve\tau}(\R)}\\
&\leq \left\|\int_\R\wt\rho\circ\Phi_{\ve, 1}(x_1, y_1)\big(H'\circ \Phi_{\ve, 1,2}(x_1,y_1)-\rho(x_1,y_1)H'(y_1)\big)[{\tt x}^*_{\ve, 1}\phi](x_1, y_1)\,d\Phi_{\ve, 1,2}(x_1, y_1)\,dy_1\right\|_{\mathcal{C}^{0}_{\ve\tau}(\R)}\\
&\quad +\left\|\int_\R\rho(x_1,y_1)H'(y_1)\big)[{\tt x}^*_{\ve, 1}\phi](x_1, y_1)\,[d\Phi_{\ve, 1,2}(x_1, y_1)-1]\,dy_1\right\|_{\mathcal{C}^{0}_{\ve\tau}(\R)}
\\
&=\mathcal{O}\big(\ve^\alpha\|\phi\|_{\mathcal{C}^{0,\mu}_{\ve\tau}(\R^2)}\big)\leq C\ve^{2+\alpha}.
\end{aligned}
\end{equation}
From this, combining (\ref{A1})--(\ref{B}) we obtain the first estimate in (\ref{2ndest}). The remaining estimates are obtained similarly using the fact that the relation (\ref{rel xxx}) can be differentiated twice with respect to $\wt x_1$. The details are omitted.
\end{proof}

Given a solution of (\ref{AC}) $u$, such that $\theta(u)=\ve$ we can define an approximate solution $\wt u_\ve$ using the solution of the Toda system with the same asymptotic angle. This solution is unique. Now we can write:
\[
u=\wt u_\ve+\wt \phi.
\]
By definition of the function $\wt g_\ve$ we know that $\wt\phi=u-\wt u_\ve$ satisfies the orthogonality condition (\ref{wt orto cond}). This allows us the control the size of $\wt \phi$ in the weighted norm in terms of the error of the approximation:
\[
E(\wt u_\ve)=\Delta \wt u-F'(\wt u_\ve),
\]
following essentially the same approach as in section \ref{proof {prop gamma}} and in particular relying on a version of Proposition \ref{apriori}. Based in this one can prove:
\begin{equation}
\label{wtphi est}
\|\wt \phi\|_{\mathcal{C}^{2,\mu}_{\ve\tau}(\R^2)}\leq C\ve^2.
\end{equation}

}

\subsection{Conclusion of the proof: the Lipschitz property of solutions}\label{last step}

With the above preparations, we are ready to prove our uniqueness theorem. Based on the results of the previous section we know that any solution with a small asymptotic angle can be written in the following way:
\[
u(\cdot;\wt g_\ve, \wt \phi)=\wt u_\ve(\cdot;\wt g_\ve)+\wt \phi,
\]
where $\wt u_\ve$ is the approximate solution defined in (\ref{wtueps}). Here and below we will indicate the dependence of this solution on the modulation function $\wt g_\ve$ as well as on $\wt \phi$. 
Now, let us consider two solutions $u^{(j)}$, $j=1,2$ with the same asymptotic angle $\theta(u^{(j)})=\ve$. 
Sine the asymptotic angle is the same for both solutions, there is just one solution of the Toda system represented by the functions $q_{\ve,1}=-q_{\ve, 2}$. On the other hand it may happen that $\wt g^{(1)}_\ve\neq \wt g_\ve^{(2)}$, and $\wt\phi^{(1)}\neq \wt \phi^{(2)}$.    In the language of \cite{MR2557944} we have that $\wt g_\ve^{(j)}\in \mathcal{C}^{2,\mu}_{\ve\tau}(\R)\oplus D$ (see also section \ref{exists small eps}) and in the previous section we have shown that
\[
\|\wt g^{(j)}_\ve\|_{\mathcal{C}^{0,\mu}_{\ve\tau}(\R)\oplus D}\leq C \ve^\alpha,
\]
with similar estimates for the higher order derivatives. In addition for the functions $\wt \phi^{(j)}$ we have (\ref{wtphi est}). 
To prove the uniqueness of solutions with small angles is therefore enough to prove "local uniqueness" in the following sense: given two solutions associated to the same solution of the Toda system we have $\wt\phi^{(1)}=\wt \phi^{(2)}$, and $\wt g_\ve^{(1)}=\wt g_\ve^{(2)}$. Our strategy to prove this fact follows in some sense the strategy used to prove the existence of solutions with small angles employed in  \cite{MR2557944}. Namely, we show the Lipschitz property of the map: $\wt g_\ve\mapsto E(\wt u_\ve(\cdot; \wt g_\ve))^\perp$ and then we use the linearized  equation to show that $\wt \phi^{(1)}-\wt \phi^{(2)}$ can be controlled by a small constant times $\wt g_\ve^{(1)}-\wt g_\ve^{(2)}$.   As a final step we show that the function $\wt g_\ve^{(1)}-\wt g_\ve^{(2)}$ satisfies the linearized Toda system with the right hand side again controlled by a small constant times $\wt g_\ve^{(1)}-\wt g_\ve^{(2)}$. This leads us to conclude that $\wt g_\ve^{(1)}-\wt g_\ve^{(2)}=0$, and a s a result we infer the uniqueness.

Now we will present some details of the argument outlined above. Many of the calculations are quite similar to the ones in \cite{MR2557944}.

For future purpose it is convenient to introduce the following projection defined for any function $\psi\colon\R^2\to \R$:
\[
\psi^{\perp_{(j)}}=\psi-\wt {\tt x}_{\ve, i}\circ \sum_{i=1,2}\wt c^{(j)}_i\wt \rho^{(j)}_{\ve, i}\wt {H}_{\ve, i}^{{(j)}\prime}\int_{\mathbb{R}}\left [\wt {\tt x}_{\ve, i}^{*}\psi
\wt \rho_{\ve, i}^{(j)}\wt {H}_{\ve, i}^{{(j)}\prime}\right](\wt x_i, \wt y_i)\,d\wt y_{i}, \quad \wt c^{(j)}_i=\left(\int_\R[\wt{\tt x}_{i,\ve}^{*}(\wt \rho_{\ve, i}^{(j)}\wt {H}_{\ve, i}^{{(j)}\prime})]^2(\wt x_i, \wt y_i)\,d\wt y_i\right)^{-1},
\]
where $j=1,2$.
%
%
%
%

\red{
\begin{lemma}\label{lip error}
The following estimates hold:
\begin{align}
\label{lip error 1}
\big\|\big[E(\wt u^{(1)}(\cdot; \wt g_\ve^{(1)})-E(\wt u^{(2)}(\cdot; \wt g_\ve^{(2)})\big]^{\perp_{(1)}}\big\|_{\mathcal{C}^{0,\mu}_{\ve\tau}(\R^2)} & \leq o(1)\|\wt g_\ve^{(1)}-\wt g_\ve^{(2)}\|_{\mathcal{C}_{\ve\tau}^{2,\mu}(\R)\oplus D}\\
\label{fil}
\|\wt  \phi^{(1)}-\wt \phi^{(2)}\| _{\mathcal{C}^{2,\mu}_{\varepsilon\tau
}(\mathbb{R}^{2})}& \leq o(1)\|\wt g_\ve^{(1)}-\wt g_\ve^{(2)}\|_{\mathcal{C}_{\ve\tau}^{2,\mu}(\R)\oplus D}
\end{align}
\end{lemma}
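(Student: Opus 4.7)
The plan is to prove the two estimates by exploiting the fact that $\wt u_\ve$ depends on $\wt g_\ve$ in a quantitatively controlled way, and that both $\wt g_\ve^{(1)}$ and $\wt g_\ve^{(2)}$ are close in the weighted norm to the same Toda trajectory $q_{\ve,1}$. Setting $\wt g := \wt g_\ve^{(1)}-\wt g_\ve^{(2)}$ and $\wt g_\ve^{(t)} := (1-t)\wt g_\ve^{(2)}+t\wt g_\ve^{(1)}$, I would first estimate $\wt u_\ve^{(1)}-\wt u_\ve^{(2)}=\int_0^1 \partial_t\wt u_\ve^{(t)}\,dt$. In Fermi coordinates adapted to $\wt\Gamma_{\ve,i}$, the integrand is, to leading order, $-\wt g(\wt x_i)\,\wt H'_{\ve,i}$ together with cutoff perturbations supported where $|\wt y_i|\sim \wt{\mathrm d}_\ve$, so it enjoys the Lipschitz estimate $\|\wt u_\ve^{(1)}-\wt u_\ve^{(2)}\|_{\mathcal C^{2,\mu}_{\ve\tau}(\R^2)}\le C\|\wt g\|_{\mathcal C^{2,\mu}_{\ve\tau}(\R)\oplus D}$.

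To obtain (\ref{lip error 1}), I would insert this expansion into $E(\wt u_\ve)=\Delta\wt u_\ve-F'(\wt u_\ve)$ and follow the bookkeeping of (\ref{eu}) and Lemma \ref{Eu}. The increment $E(\wt u_\ve^{(1)})-E(\wt u_\ve^{(2)})$ then splits into three types of terms: (a) a principal contribution of the form $\bigl[\bar c_0\wt g''+4\sqrt2 e^{2\sqrt 2 q_{\ve,1}}\wt g\bigr]\wt H'_{\ve,i}$ coming from the Jacobi operator along the Toda trajectory; (b) interaction cross terms carrying the factor $e^{-\sqrt 2(|\wt y_1|+|\wt y_2|)}$; and (c) remainders of order $o(1)\|\wt g\|$ produced by the deviation of $\wt g_\ve^{(j)}$ from $q_{\ve,1}$, by the cutoff derivatives, and by the coordinate change bounds (\ref{62})--(\ref{63}). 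The key observation is that type (a) is, by construction, a scalar function times $\wt\rho_{\ve,i}\wt H'_{\ve,i}$ and is therefore annihilated by the projection $\psi\mapsto\psi^{\perp_{(1)}}$; types (b) and (c) already satisfy the desired $o(1)\|\wt g\|$ bound in the $\mathcal C^{0,\mu}_{\ve\tau}$ norm, whence (\ref{lip error 1}) follows.

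To prove (\ref{fil}), I would subtract the equations satisfied by $\wt\phi^{(j)}=u^{(j)}-\wt u_\ve^{(j)}$. Setting $\Psi=\wt\phi^{(1)}-\wt\phi^{(2)}$ one obtains
\[
L_{\wt u_\ve^{(1)}}\Psi = \bigl[E(\wt u_\ve^{(1)})-E(\wt u_\ve^{(2)})\bigr] + \bigl[F''(\wt u_\ve^{(2)})-F''(\wt u_\ve^{(1)})\bigr]\wt\phi^{(2)} - \bigl[P^{(1)}(\wt\phi^{(1)})-P^{(2)}(\wt\phi^{(2)})\bigr].
\]
The function $\Psi$ satisfies the orthogonality condition (\ref{wt orto cond}) up to an error of size $o(1)\|\wt g\|$, coming from the fact that the two orthogonalities are taken with respect to different cutoffs $\wt\rho_{\ve,i}^{(j)}$ and different shifts $\wt g_\ve^{(j)}$. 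Applying the analogue of Proposition \ref{apriori} for $L_{\wt u_\ve^{(1)}}$ (which follows by the same contradiction argument, since $\wt u_\ve^{(1)}$ is a good approximate solution), and bounding the last two terms on the right using (\ref{wtphi est}) together with the Lipschitz estimate on $\wt u_\ve^{(j)}$ derived above, yields $\|\Psi\|_{\mathcal C^{2,\mu}_{\ve\tau}}\le o(1)\bigl(\|\Psi\|_{\mathcal C^{2,\mu}_{\ve\tau}}+\|\wt g\|_{\mathcal C^{2,\mu}_{\ve\tau}(\R)\oplus D}\bigr)+C\|[E(\wt u_\ve^{(1)})-E(\wt u_\ve^{(2)})]^{\perp_{(1)}}\|_{\mathcal C^{0,\mu}_{\ve\tau}}$. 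Absorbing the $\Psi$ term on the left and using (\ref{lip error 1}) delivers (\ref{fil}).

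The main obstacle is the cancellation at the heart of the first step: a naive estimate gives type (a) contributions of genuine size $O(\|\wt g\|)$, not $o(1)\|\wt g\|$, and it is only after projecting out the one-dimensional kernel direction spanned by $\wt\rho_{\ve,i}\wt H'_{\ve,i}$ that the desired smallness emerges. Executing this cancellation rigorously requires revisiting the Toda-reduction computation of \cite{MR2557944}, this time applied to the increment in $\wt g_\ve$ rather than to $\wt g_\ve$ itself, and carefully tracking how the discrepancies produced by the change of Fermi coordinates (\ref{62})--(\ref{63}) and by the two different cutoffs $\wt\rho^{(j)}_{\ve,i}$ contribute only to the orthogonal part.
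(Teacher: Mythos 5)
Your proposal is correct and follows essentially the same route as the paper's proof: you compute the increment of the error in $\wt g_\ve$, observe that the Jacobi/Toda term aligned with $\wt H'_{\ve,i}$ is removed by the $\perp_{(1)}$ projection (up to exponentially small remainders since $H'$ and $\wt\rho_{\ve,i}\wt H'_{\ve,i}$ differ only where $\wt\rho_{\ve,i}\ne 1$), and then derive (\ref{fil}) from the equation for $\wt\phi^{(1)}-\wt\phi^{(2)}$ via the a priori estimate of Proposition \ref{apriori}. The only organizational differences are that the paper decomposes the projected error increment as $\big[E_1^{\perp_{(1)}}-E_2^{\perp_{(2)}}\big]+\big[E_2^{\perp_{(2)}}-E_2^{\perp_{(1)}}\big]$ and explicitly splits $\wt\psi=\wt\psi^\perp+\wt\psi^\parallel$ (bounding $\wt\psi^\parallel$ separately) so that Proposition \ref{apriori} applies to $\wt\psi^\perp$ exactly, rather than "up to an error"; your plan should be tightened in the same way, but this is a presentational point, not a gap.
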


\begin{remark}
Essentially, up to some minor difference, this Lipschitz property has already
been proved in \cite{MR2557944}. Here we give a sketch of the proof for completeness.
\end{remark}

\begin{proof}
To prove the first of the estimates we use essentially the formula for the error (\ref{eu}), replacing $\bar u_\ve$ by $\wt u_\ve^{(j)}$, $j=1,2$ and then taking  the difference of the resulting terms $E(\wt u^{(j)}(\cdot; \wt g_\ve^{(1)}))$, and finally taking the projection ${}^{\perp_{(1)}}$. Then we write:
\begin{align*}
\big[E(\wt u^{(1)}(\cdot; \wt g_\ve^{(1)})-E(\wt u^{(2)}(\cdot; \wt g_\ve^{(2)})\big]^{\perp_{(1)}}&=\big[E(\wt u^{(1)}(\cdot; \wt g_\ve^{(1)})^{\perp_{(1)}}-E(\wt u^{(2)}(\cdot; \wt g_\ve^{(2)})^{\perp_{(2)}}\big]\\
&\quad +\big[E(\wt u^{(2)}(\cdot; \wt g_\ve^{(2)})^{\perp_{(2)}}-E(\wt u^{(2)}(\cdot; \wt g_\ve^{(2)})^{\perp_{(1)}}\big].
\end{align*}
To estimate this expression we use calculations based on formula (\ref{eu}) and similar to the ones in  Lemma \ref{Eu}. 

To show (\ref{fil}) we  should consider the equation satisfied by the difference $\wt \psi=\wt\phi^{(1)}-\wt\phi^{(2)}$ and use Proposition \ref{apriori}. The slight technical problem is that $\wt \psi$ does not satisfy the orthogonality condition as in  (\ref{wt orto cond}). To overcome this we further define function $\wt\psi^\perp$ by
\[
\wt \psi^\perp=\wt\psi-\wt {\tt x}_{\ve, i}\circ \sum_{i=1,2}\wt c_i\wt \rho^{(1)}_{\ve, i}\wt {H}_{\ve, i}^{{(1)}\prime}\int_{\mathbb{R}}\left [\wt {\tt x}_{\ve, i}^{*}\wt\psi
\wt \rho_{\ve, i}^{(1)}\wt {H}_{\ve, i}^{{(1)}\prime}\right](\wt x_i, \wt y_i)\,d\wt y_{i}, \quad \wt c_i=\left(\int_\R[\wt{\tt x}_{i,\ve}^{*}(\wt \rho_{\ve, i}^{(1)}\wt {H}_{\ve, i}^{{(1)}\prime})]^2(\wt x_i, \wt y_i)\,d\wt y_i\right)^{-1}.
\]
For later use we set $\wt\psi^\parallel=\wt\psi-\wt\psi^\perp$.
It is not hard to show that 
\[
\|\wt\psi^\parallel\|_{\mathcal{C}^{2,\mu}_{\ve\tau}(\R^2)}\leq  C\ve^2\|\wt g_\ve^{(1)}-\wt g_\ve^{(2)}\|_{\mathcal{C}_{\ve\tau}^{2,\mu}(\R)\oplus D},
\]
hence
\[
\|\wt\psi^\perp\|_{\mathcal{C}^{2,\mu}_{\ve\tau}(\R^2)}\geq \|\wt \psi\|_{\mathcal{C}^{2,\mu}_{\ve\tau}(\R^2)}-C\ve^2\|\wt g_\ve^{(1)}-\wt g_\ve^{(2)}\|_{\mathcal{C}_{\ve\tau}^{2,\mu}(\R)\oplus D}.
\]
On the other hand denoting:
\[
L^{(i)}=-\Delta+F^{\prime\prime}(\wt{u}_{\ve}^{(i)}),\quad P(\phi^{(i)})  =F^{\prime}(\wt{u}^{(i)}_\ve+\phi^{(i)})  -F^{\prime
}(\wt{u}^{(i)}_\ve)  -F^{\prime\prime}(\wt{u}^{(i)}_\ve), \quad i=1,2
\]
we get:
\begin{equation}
\label{lonepsi}
L^{(1)}\wt\psi^\perp=\underbrace{E(\wt u_\ve^{(1)})-E(\wt u_\ve^{(2)})+P(\phi^{(1)})-P(\phi^{(2)})+(L^{(1)}-L^{(2)})\phi^{(2)}+L^{(1)}\wt\psi^\parallel}_{\wt f}.
\end{equation}
The $\mathcal{C}^{0,\mu}_{\ve\tau}(\R^2)$ norm of the right hand side of this equation can be estimated by 
\[
\|\wt f\|_{\mathcal{C}^{0,\mu}_{\ve\tau}(\R^2)}\leq o(1)\|\wt \psi\|_{\mathcal{C}^{2,\mu}_{\ve\tau}(\R^2)}+C\ve^2\|\wt g_\ve^{(1)}-\wt g_\ve^{(2)}\|_{\mathcal{C}_{\ve\tau}^{2,\mu}(\R)\oplus D}.
\]
From this the required estimate follows. The proof of the Lemma is complete.
\end{proof}}

\red{Now we are in position to conclude the proof of uniqueness of solutions with small angles. To this end we consider the following quantity (c.f. the proof of Lemma \ref{fi}):
\[
\mathcal{T}=\int_{\mathbb{R}}\big[\wt {\tt x}_{\ve, i}^{*}\big(E(\wt u_\ve^{(1)})\wt \rho_{\ve, i}^{(1)}\wt {H}_{\ve, i}^{{(1)}\prime}\big](\wt x_i, \wt y_i)\,d\wt y_{i}-\int_\R \big[\wt {\tt x}_{\ve, i}^{*}E(\wt u_\ve^{(2)})
\wt \rho_{\ve, i}^{(2)}\wt {H}_{\ve, i}^{{(2)}\prime}\big](\wt x_i, \wt y_i)\,d\wt y_{i}
\]
First we proceed as in Step 1 in the proof of Lemma \ref{fi}. This leads to the following estimate:
\begin{equation}
\label{ppp}
|{\mathcal T}|\leq C\ve^{2+\alpha}\|\wt g^{(1)}_\ve-\wt g^{(2)}_\ve\|_{\mathcal {C}^{2,\mu}_{\ve\tau}(\R)\oplus D}+
C\ve^2\|\wt g^{(1)'}_\ve-\wt g^{(2)'}_\ve\|_{\mathcal {C}^{1,\mu}_{\ve\tau}(\R)}.
\end{equation}
For brevity let us denote $\wt h_\ve=\wt g^{(1)}_\ve-\wt g^{(2)}_\ve$. Now we calculate $\mathcal{T}$ using the explicit expressions for $\wt u_{\ve}^{(i)}$ in a manner similar to Step 2 of Lemma \ref{fi} and  as a result we get a formula which is similar to (\ref{t24}), except the $f_{\ve, i}$ is replaced by $q_{\ve, i}$, a solution to the Toda system. Thus, calculating ${\mathcal T}$ in two ways we get at the end:
\begin{equation}
\label{ppp1}
\bar c^* \wt h_\ve '' + 2\sqrt{2}c_*e^{\,2\sqrt{2}q_{\ve,1}}\wt h_\ve= \mathbb{G}_\ve(\wt h_\ve), \quad \|\mathbb{G}(\wt h_\ve)\|_{\mathcal {C}^{0,\mu}_{\ve\tau}(\R)\oplus D}\leq C\ve^{2+\alpha}\|\wt h_\ve\|_{\mathcal {C}^{2,\mu}_{\ve\tau}(\R)\oplus D}+C\ve^2\|\wt h'_\ve\|_{\mathcal {C}^{1,\mu}_{\ve\tau}(\R)\oplus D}.
\end{equation} 

To proceed, we need to analyze the linearized Toda system. Remember that we
are always working in the space of even functions. Suppose $q$ is an even  solution
of the Toda system:
\[
\xi^{\prime\prime}\left(  t\right)  =-\frac{c^{\ast}}{c_{\ast}}e^{2\sqrt{2}%
q\left(  t\right)  },
\]
and the linearized operator is
\[
\mathcal{P}:\varphi\rightarrow\varphi^{\prime\prime}\left(  t\right)
+2\sqrt{2}\frac{c^{\ast}}{c_{\ast}}e^{2\sqrt{2}q}\varphi\left(  t\right)  .
\]
We want to know the mapping property of this operator. Let $\mathcal{C}_{\tau}^{2,\mu}(  \mathbb{R})_{e}$ be the space of even functions in $\mathcal{C}_{\tau}^{2,\mu}(  \mathbb{R})$, and $D$ be the one dimensional  deficiency space  spanned by the constant function.

\begin{lemma}
The map $\mathcal{P}:$ $C_{\tau}^{2,\mu}(  \mathbb{R})_{e}  \oplus D\rightarrow C_{\tau}^{0,\mu}\left(  \mathbb{R}\right)  $ is an
isomorphism and therefore has a bounded inverse.
\end{lemma}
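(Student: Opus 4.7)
The plan is to view $\mathcal P=\partial_t^2+V(t)$ with $V(t):=2\sqrt{2}\,\frac{c^{\ast}}{c_{\ast}}e^{\,2\sqrt{2}\,q(t)}$ as a Schr\"odinger-type operator on $\R$ whose potential is even, positive, and decays like $e^{\,-2\sqrt{2}\,m|t|}$ (here $m>0$ is the slope coming from $q(t)\sim -m|t|$ at infinity). Fix $\tau\in(0,2\sqrt{2}\,m)$. Since $V$ is even, $\mathcal P$ preserves parity, so I work on the even subspace and treat the image as lying in the even part of $\mathcal C^{0,\mu}_{\tau}(\R)$.

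\textbf{Injectivity.} The Toda symmetries produce two explicit homogeneous solutions. Differentiating the Toda equation yields $\mathcal P(q')=0$, but $q'$ is odd. The scaling invariance $q(t)\mapsto q(\lambda t)+\frac{1}{\sqrt{2}}\log\lambda$ of the Toda equation, after differentiation at $\lambda=1$, gives the even element $\varphi_1(t):=tq'(t)+\frac{1}{\sqrt{2}}$; a direct computation using $q'''=2\sqrt{2}\,q'q''$ and $V=-2\sqrt{2}\,q''$ confirms $\mathcal P\varphi_1=0$. Since $q'(t)\to -m$ as $t\to+\infty$, $\varphi_1(t)\sim -m|t|$, so $\varphi_1$ has linear growth and does \emph{not} lie in $\mathcal C^{2,\mu}_{\tau}(\R)_{e}\oplus D$. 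Because every even homogeneous solution is a scalar multiple of $\varphi_1$, the kernel on the source space is trivial.

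\textbf{Surjectivity.} Set $\varphi_2:=q'$. The Wronskian $W=\varphi_1\varphi_2'-\varphi_1'\varphi_2$ is even and satisfies $W(t)\to -m^2\neq 0$ at $\pm\infty$, so $1/W$ is uniformly bounded. For even $f\in\mathcal C^{0,\mu}_{\tau}(\R)$, I define
\[
\varphi(t):=-\varphi_2(t)\int_0^t\frac{\varphi_1(s)f(s)}{W(s)}\,ds+\varphi_1(t)\int_{\infty}^t\frac{\varphi_2(s)f(s)}{W(s)}\,ds.
\]
This solves $\mathcal P\varphi=f$ (moving the lower limit of the second integral from $0$ to $\infty$ merely adds a multiple of $\varphi_1\in\ker\mathcal P$), and the parity bookkeeping---$\varphi_1 f$ is even with odd antiderivative, $\varphi_2 f$ is odd with even antiderivative, $W$ is even---shows that $\varphi$ is even. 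The crucial consequence of shifting the limit is that $\int_{\infty}^t\varphi_2 f/W\,ds=\mathcal{O}(e^{\,-\tau|t|})$, so $\varphi_1(t)$ multiplied by this quantity is $\mathcal{O}(|t|e^{\,-\tau|t|})$, which sits in $\mathcal C^{2,\mu}_{\tau}(\R)$. For the first term I split $\int_0^t=\int_0^{\infty}-\int_t^{\infty}$; the tail is again $\mathcal{O}(|t|e^{\,-\tau|t|})$, while $-\varphi_2(t)\int_0^{\infty}\varphi_1 f/W\,ds$ converges (thanks to $\varphi_2(t)=-m+\mathcal{O}(e^{\,-2\sqrt{2}\,m|t|})$) to a constant $mC_{\infty}$ with $C_{\infty}:=\int_0^{\infty}\varphi_1 f/W\,ds$. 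Evenness of $\varphi$ forces the same limit at $-\infty$, so $\varphi=mC_{\infty}\cdot 1+\wt\varphi$ with $\wt\varphi\in\mathcal C^{2,\mu}_{\tau}(\R)_{e}$, i.e.\ $\varphi\in\mathcal C^{2,\mu}_{\tau}(\R)_{e}\oplus D$.

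\textbf{Bounded inverse and main obstacle.} The explicit formula, together with $|W|^{-1}\leq C$ and the weighted decay of $f$, immediately yields $\|\varphi\|_{\mathcal C^{2,\mu}_{\tau}(\R)_{e}\oplus D}\leq C\|f\|_{\mathcal C^{0,\mu}_{\tau}(\R)}$; combined with triviality of the kernel, this gives the isomorphism by the open mapping theorem. The one genuinely delicate point is arranging the integration limits so that the two boundary contributions conspire to produce the \emph{same} constant at $+\infty$ and $-\infty$---a symmetry which is forced by the parities of $f,\varphi_1,W$ and the oddness of $\varphi_2$, so that once the correct formula is written the remaining analysis is only a bookkeeping of exponential weights.
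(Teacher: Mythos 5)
Your overall strategy --- identify the two Jacobi fields $\varphi_1=tq'+\tfrac{1}{\sqrt2}$ (even, linearly growing) and $\varphi_2=q'$ (odd, bounded), use the growth of $\varphi_1$ to settle injectivity, and build a right inverse by variation of parameters with parity-respecting limits --- is the right one for a one-dimensional linearized-Toda operator of this type. The paper itself gives no in-text argument for this lemma (it refers to \cite{MR2557944}), so there is no direct comparison to make; but your derivation of $\varphi_1$ from the scaling symmetry, the kernel computation, and the parity bookkeeping are all correct.

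There is, however, a genuine gap in the surjectivity estimate. You assert that $\varphi_1(t)\int_\infty^t \varphi_2 f/W\,ds=\mathcal{O}(|t|e^{-\tau|t|})$ "sits in $\mathcal C^{2,\mu}_{\tau}(\R)$". With the weight $(\cosh t)^\tau$ this is false: $(\cosh t)^\tau\,|t|e^{-\tau|t|}\sim c\,|t|\to\infty$, so $|t|e^{-\tau|t|}\notin\mathcal C^{0}_{\tau}(\R)$. The same linear loss appears in the tail $\varphi_2(t)\int_t^\infty\varphi_1 f/W\,ds$ of your first term. The term-by-term estimate therefore only places $\varphi$ in $\mathcal C^{2,\mu}_{\tau'}(\R)_e\oplus D$ for $\tau'<\tau$, which is not what the lemma states, nor what is needed for the quantitative conclusion \eqref{qqq x} downstream. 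The factor $|t|$ genuinely appears in each of the two integrals and disappears only through a cancellation between them. One way to see this is to write $\varphi_1(s)=-ms+r_1(s)$, $\varphi_2(t)=-m+r_2(t)$ with $r_1$ bounded and $r_2$ exponentially small, and to regroup the two leading pieces:
\[
m\int_0^t\frac{\varphi_1 f}{W}\,ds - mt\int_\infty^t\frac{\varphi_2 f}{W}\,ds
= mC_\infty - m\int_t^\infty\frac{\bigl((s-t)q'(s)+\tfrac{1}{\sqrt2}\bigr)f(s)}{W}\,ds ,
\]
and since $|(s-t)q'(s)+\tfrac{1}{\sqrt2}|\leq m(s-t)+\tfrac{1}{\sqrt2}$, the remaining integral is $\mathcal{O}(e^{-\tau t})$ with no polynomial loss; the $r_1$ and $r_2$ contributions are also $\mathcal{O}(e^{-\tau t})$. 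A second, perhaps more robust repair is to use your formula only to conclude that $\varphi$ is bounded with a limit $c_\infty$ at $\pm\infty$, and then bootstrap through the ODE itself: from $\varphi''=-V\varphi+\tilde f$, with $V\varphi=\mathcal{O}(e^{-2\sqrt2\,m|t|})$ and $\tilde f=\mathcal{O}(e^{-\tau|t|})$, two integrations from $t$ to $\infty$ (using $\varphi'(\infty)=0$) give $|\varphi-c_\infty|\leq Ce^{-\tau|t|}$ directly. Either repair closes the gap; as written, your estimate does not. (As a very minor aside, your variation-of-parameters formula solves $\mathcal P\varphi=-f$ rather than $f$; this sign has no bearing on the argument.)
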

This result has already been proven in  \cite{MR2557944}. Now we simply need to adopt  the above Lemma to the present context, where the functions involved depend on $\ve$ as well. This can be done by a simple scaling argument if we define:
\[
\wt h(x)=\wt h_\ve\big(\frac{x}{\ve}\big),
\]
and then write the resulting equation for $\wt h$:
\[
c^*\wt h''+2\sqrt{2}c_*e^{\,2\sqrt{2}q_{1}}\wt h = \ve^{-2} \mathbb{G}_\ve(\wt h(\ve^{-1}\cdot)),
\]
where $q=(q_1, q_2)$ is the even solution of the Toda system whose scattering behavior at corresponds to the lines $y=\pm |x|$. Now, using the fact that $\wt h_\ve'=\ve\wt h'$, $\wt h_\ve''=\ve^2\wt h''$ also that 
\[
\|\wt h\|_{\mathcal{C}^{0,\mu}_{\tau}(\R)}=\ve^{-\mu}\|\wt h_\ve\|_{\mathcal{C}^{0,\mu}_{\ve\tau}(\R)},
\]
we  get (c.f. a similar scaling  argument in \cite{MR2557944}) we get:
\begin{equation}
\label{qqq x}
\|\wt h\|_{\mathcal{C}^{2,\mu}_{\tau}(\R)}\leq C\ve^{\alpha-\mu}\|\wt h\|_{\mathcal{C}^{2,\mu}_{\tau}(\R)},
\end{equation}
from which we get $\wt h=0$ provided that $\mu<\alpha$ and $\ve$ is taken small. This in turn implies $\wt g^{(1)}_\ve=\wt g^{(2)}_\ve$ and $\wt \phi^{(1)}=\wt \phi^{(2)}$, hence we get uniqueness.
This ends the proof of Theorem  \ref{teo uniqueness}.

}

\end{document}